\newtheorem{theorem}{Theorem}
\newtheorem{proposition}[theorem]{Proposition}
\theoremstyle{definition}
\newtheorem{lemma}[theorem]{Lemma}
\theoremstyle{remark}
\newtheorem{remark}[theorem]{Remark}
\newtheorem{assumption}[theorem]{Assumption}
\numberwithin{theorem}{section}
\numberwithin{equation}{section}
\numberwithin{table}{section}
\numberwithin{figure}{section}
\newcommand{\V}{\ensuremath{\mathcal{V}}}
\newcommand{\Q}{\ensuremath{\mathcal{Q}}}
\newcommand{\M}{\ensuremath{\mathcal{M}}}
\def\N{\mathbb{N}}
\def\R{\mathbb{R}}
\definecolor{myBlue1}{RGB}{101,149,239}  
\definecolor{myBlue2}{RGB}{113,104,238} 
\definecolor{myBlue3}{RGB}{30,144,255} 
\definecolor{myGreen1}{RGB}{154,204,50} 
\definecolor{myGreen2}{RGB}{69,169,0} 
\definecolor{myGreen3}{RGB}{154,205,50} 
\definecolor{myGreen4}{RGB}{105,139,34} 
\definecolor{myRed1}{RGB}{210,105,30} 
\definecolor{myRed2}{RGB}{165,42,42} 
\definecolor{myRed3}{RGB}{139,26,26} 
\definecolor{lightgray}{RGB}{175,175,175} 
\definecolor{myLGray}{RGB}{225,225,225} 
\definecolor{mycolor0}{rgb}{0.66,0.66,0.66}
\definecolor{mycolor4}{rgb}{0.00000,0.44700,0.74100}
\definecolor{mycolor1}{rgb}{0.85000,0.32500,0.09800}
\definecolor{mycolor2}{rgb}{0.92900,0.69400,0.12500}
\definecolor{mycolor3}{rgb}{0.67000,0.74700,0.14100}
\definecolor{mycolor5}{rgb}{0.49400,0.18400,0.55600}
\definecolor{mycolor7}{rgb}{0.98000,0.82500,0.1800}
\DeclareMathOperator{\trace}{trace}
\DeclareMathOperator{\DBDF}{D_\tau^{BDF}}
\DeclareMathOperator{\Dalt}{D_\tau^{alt}}
\DeclareMathOperator{\EBDF}{E}
\newcommand{\Vh}{\mathbb{V}_h}
\newcommand{\Qh}{\mathbb{Q}_h}
\newcommand{\Mh}{\mathbb{M}_h}
\newcommand{\calKu}{\calK_u}
\newcommand{\calKp}{\calK_p}
\newcommand{\calBu}{\calB_u}
\newcommand{\calBp}{\calB_p}
\newcommand{\Mu}[1][]{M_{u#1}}
\newcommand{\Mp}[1][]{M_{p#1}}
\newcommand{\Ku}[1][]{K_{u#1}}
\newcommand{\Kp}[1][]{K_{p#1}}
\newcommand{\Mla}{M_\lambda}
\newcommand{\Bu}{B_u}
\newcommand{\Bp}{B_p}
\newcommand{\fu}[1][]{f_{#1}}
\newcommand{\fp}[1][]{g_{#1}}
\newcommand{\au}{\alpha}
\newcommand{\ap}{\kappa}
\renewcommand{\c}{\text{c}}
\newcommand{\calB}{\ensuremath{\mathcal{B}} }
\newcommand{\calK}{\ensuremath{\mathcal{K}} }
\newcommand{\calM}{\ensuremath{\mathcal{M}} }
\newcommand{\calP}{\ensuremath{\mathcal{P}} }
\newcommand{\calQ}{\ensuremath{\mathcal{Q}} }
\newcommand{\calT}{\ensuremath{\mathcal{T}} }
\newcommand{\calV}{\ensuremath{\mathcal{V}} }
\def\ds{\,\text{d}s}
\def\dx{\,\text{d}x}
\def\dxi{\,\text{d}\xi}
\newcommand{\partialan}{\ensuremath{\partial_{\au,\nu}} }
\newcommand{\dofOm}{{\ensuremath{N_u}}}
\newcommand{\dofGa}{{\ensuremath{N_p}}}
\newcommand{\dofLam}{{\ensuremath{N_\lambda}}}
\definecolor{corrRed}{RGB}{18,124,175} 
\begin{document}
\title[Second-order bulk--surface splitting]{A second-order bulk--surface splitting for parabolic problems with dynamic boundary conditions}
\author[]{R.~Altmann$^\dagger$, C.~Zimmer$^{\ddagger}$}
\address{${}^{\dagger}$ Institute of Analysis and Numerics, Otto von Guericke University Magdeburg, Universit\"atsplatz 2, 39106 Magdeburg, Germany}
\address{${}^{\ddagger}$ Institute of Mathematics, University of Augsburg, Universit\"atsstr.~12a, 86159 Augsburg, Germany}
\email{robert.altmann@ovgu.de, christoph.zimmer@math.uni-augsburg.de}
\thanks{This paper is accepted for publication in IMA J. Numer. Anal. \\
\indent 
The authors acknowledge the support of the Deutsche Forschungsgemeinschaft (DFG, German Research Foundation) through the project 446856041. Moreover, major parts of this work were carried out while the first author was affiliated with the Institute of Mathematics and the Centre for Advanced Analytics and Predictive Sciences (CAAPS) at the University of Augsburg. }
\date{\today}
\keywords{}
\begin{abstract}
This paper introduces a novel approach for the construction of bulk--surface splitting schemes for semi-linear parabolic partial differential equations with dynamic boundary conditions. The proposed construction is based on a reformulation of the system as a partial differential--algebraic equation and the inclusion of certain delay terms for the decoupling. To obtain a fully discrete scheme, the splitting approach is combined with finite elements in space and a BDF discretization in time. Within this paper, we focus on the second-order case, resulting in a~$3$-step scheme. We prove second-order convergence under the assumption of a weak CFL-type condition and confirm the theoretical findings by numerical experiments. Moreover, we illustrate the potential for higher-order splitting schemes numerically. 
\end{abstract}
%
%
\maketitle
%
{\tiny {\bf Key words.} dynamic boundary conditions, bulk--surface splitting, parabolic PDE, PDAE, delay}\\
\indent
{\tiny {\bf AMS subject classifications.}  {\bf 65M12}, {\bf 65L80}, {\bf 65M20}. } 
%
%
%
\section{Introduction}
In recent years, so-called {\em dynamic boundary conditions} have attracted rising attention as they enable the reflection of effective properties on the surface of the domain~\cite{Esc93,Gol06}. 
For this, the energy balance and the constitutive laws on the boundary need to be taken into account rather than being neglected as for standard boundary conditions. In the here considered parabolic setting, they particularly provide a proper way to model a heat source or a heat transfer on the boundary~\cite{Gol06}. Further applications are short range wall--mixture interactions in the phase separation of multi-component alloy systems as well as the modeling of boundary layers when the thickness goes to zero~\cite{Lie13}. 

From a mathematical point of view, dynamic boundary conditions define a differential equation on the boundary such that the overall system may be interpreted as a coupled system; see~\cite[Ch.~5.3]{Las02} or~\cite{EngF05}. We follow this paradigm and consider the formulation introduced in~\cite{Alt19} as {\em partial differential--algebraic equation} (PDAE). Since such a formulation allows different discretizations in the bulk and on the surface, it permits significant computational savings in cases where the solution on the boundary oscillates rapidly, e.g., due to nonlinearities or heterogeneities~\cite{AltV21}. 
%
In this paper, we are interested in bulk--surface splitting schemes, which decouple and hence further facilitate the numerical approximation. Quite recently, splitting approaches of first order were introduced in~\cite{AltKZ22,CsoFK23}. To the best of our knowledge, these two papers feature the first bulk--surface splitting schemes with a rigorous convergence analysis, using variational and semigroup techniques, respectively. On the other hand, two different Strang splitting schemes were proposed in~\cite{KovL17}. These schemes, however, show sub-optimal convergence rates and may even converge to a perturbed (and hence wrong) solution; see the numerical experiments in~\cite{AltKZ22}. 

The aim of this paper is to remedy the lack of a second-order bulk--surface splitting scheme in the parabolic setting. For the here presented construction of higher-order schemes, we follow the seminal idea of including delay terms as proposed in~\cite{AltMU21,AltMU22ppt} in the context of coupled elliptic--parabolic problems. Here, delay terms are used to approximate the constraints, which couple bulk and surface dynamics. Depending on the desired order, a certain number of delay terms is necessary and the corresponding delay times are multiples of the intended step size of the splitting. Hence, the proposed splitting approach results in multistep methods. Within this paper, we focus on a $3$-step splitting scheme, which is of order two. To obtain a fully discrete scheme, the two subflows are discretized by finite elements in space and BDF-$2$ in time. 
%
To prove second-order convergence, we compare the fully discrete solution with the semi-discrete solution resulting from the spatial discretization alone. We first analyze an auxiliary scheme and rewrite it in form of a perturbed BDF-$2$ discretization of the original system. In order to guarantee convergence, we require a weak CFL-type condition of the form $\tau\le C\, h$ with $\tau$ denoting the temporal step size and~$h$ the spatial mesh width. Note that this is not the typical CFL condition for parabolic problems, where~$\tau$ is restricted by~$h^2$. Based on the convergence result for the auxiliary scheme, we finally conclude second-order convergence for the proposed bulk--surface splitting. Moreover, numerical experiments indicate that no CFL condition is necessary in practice. 

The paper is outlined as follows. In Section~\ref{sect:model} we introduce the model problem, its formulation as PDAE as well as its spatial discretization. The splitting into a bulk and a surface problem is then discussed in Section~\ref{sect:splitting}. In order to allow a decoupling, we modify the coupling conditions by certain delay terms, leading to a bulk--surface splitting scheme. The proof of convergence is then subject of Section~\ref{sect:convergence}. Finally, we complete the paper with a numerical validation of the theoretical results including an outlook to higher-order splitting schemes in Section~\ref{sect:numerics} and some concluding remarks in Section~\ref{sect:conclusion}. 
%
%
\section{Abstract Formulation and Spatial Discretization}\label{sect:model} 
Given a bounded Lipschitz domain $\Omega \subseteq \R^d$, $d\in\{2,3\}$, we consider the following semi-linear parabolic model problem with dynamic boundary conditions: seek $u\colon [0,T] \times \Omega \to \R$ such that 
\begin{subequations}
\label{eq:parabolicDynBC}
\begin{align}
	\dot u - \nabla\cdot(\au \nabla u) 
	&= \fu(u) \qquad\text{in } \Omega \label{eq:parabolicDynBC:eqn} \\
	\dot u - \nabla_\Gamma\cdot(\ap \nabla_\Gamma u) + \partialan u 
	&= \fp(u) \qquad\text{on } \Gamma \coloneqq \partial\Omega. \label{eq:parabolicDynBC:bc}
\end{align}
\end{subequations}
with initial condition~$u(0) = u^0$ and normal derivative $\partialan u = \nu\cdot(\au \nabla u)$, where~$\nu$ denotes the unit normal vector. Moreover, $\nabla_\Gamma$ denotes the gradient in tangential direction of the boundary~$\Gamma$; cf.~\cite[Ch.~16.1]{GilT01}. Throughout this paper, we assume the diffusion coefficients~$\au\in L^\infty(\Omega)$ and~$\ap\in L^\infty(\Gamma)$ to be uniformly positive and the right-hand sides~$\fu$, $\fp$ to be sufficiently smooth. 

%
%
\subsection{PDAE formulation}
For the numerical approximation, we pass to a weak formulation of~\eqref{eq:parabolicDynBC}. One possible way is presented in~\cite{KovL17}, showing that these problems can be understood in the functional analytic framework of standard parabolic systems. For the design of splitting methods, however, it has been observed in~\cite{AltKZ22} that the formulation as PDAE as proposed in~\cite{Alt19} is more eligible. For this, we introduce the function spaces 
\[
	\V \coloneqq H^1(\Omega), \qquad
	\Q \coloneqq H^1(\Gamma), \qquad 
	\M \coloneqq H^{-\sfrac 12}(\Gamma).
\]
Moreover, we extend the system by an auxiliary variable $p\coloneqq \trace u$ (with the standard trace operator) and a Lagrange multiplier~$\lambda$ enforcing this connection of~$u$ and~$p$ on the boundary. In total, this leads to the following weak formulation: find $u\colon [0,T] \to \V$, $p\colon [0,T] \to \Q$, and~$\lambda\colon [0,T] \to \M$ such that  
\begin{subequations}
\label{eq:PDAE}
\begin{align}
	\begin{bmatrix} \dot u \\ \dot p  \end{bmatrix}
	+ \begin{bmatrix} \calKu &  \\  & \calKp \end{bmatrix}
	\begin{bmatrix} u \\ p \end{bmatrix}
	+ \begin{bmatrix} -\calBu^* \\ \calBp^* \end{bmatrix} \lambda 
	&= \begin{bmatrix} \fu(u) \\ \fp(p) \end{bmatrix} \qquad \text{in } (\V\times \Q)^*, \label{eq:PDAE:a} \\
	\calBu u - \calBp p 
	&= \phantom{[} 0\hspace{4.5em} \text{in } \M^*. \label{eq:PDAE:b}
	\end{align}
\end{subequations}
Therein, the differential operators~$\calKu\colon \V \to \V^*$ and~$\calKp\colon \Q \to \Q^*$ are given by
\[
	\langle \calKu u, v\rangle 
	\coloneqq \int_\Omega \au\, \nabla u \cdot \nabla v \dx, \qquad 
	\langle \calKp p, q\rangle
	\coloneqq \int_\Gamma \ap\, \nabla_\Gamma p \cdot \nabla_\Gamma q \dx, 
\]
whereas~$\calBu\colon \V\to\M^*$ denotes the trace operator~$\calBu u \coloneqq \trace u$ and~$\calBp\colon \Q\to\M^*$ the canonical embedding from $H^1(\Gamma)$ to $H^{\sfrac 12}(\Gamma)$. 

Due to the extension of the system, the PDAE~\eqref{eq:PDAE} calls for initial data for~$u$ and~$p$. Hence, we assume given~$u(0) = u^0$ and~$p(0) = p^0$ and refer to the data as {\em consistent} if they coincide on the boundary, i.e., if $\calBu u^0 = \calBp p^0$. 
%
%
\subsection{Spatial discretization} 
First error estimates for finite elements applied to parabolic problems with dynamic boundary conditions such as~\eqref{eq:parabolicDynBC} were already introduced in~\cite{Fai79}. Corresponding bulk--surface finite elements were then analyzed in~\cite{KovL17}. In the present PDAE stetting, the spatial discretization calls for three discrete spaces, namely~$\Vh \subseteq \V$, $\Qh \subseteq \Q$, and~$\Mh \subseteq \M$. Note that we restrict ourselves to conforming finite elements, where the discrete spaces are indeed subspaces. Due to the saddle point structure of the PDAE~\eqref{eq:PDAE}, these spaces need to be compatible in the sense of an {\em inf--sup condition}, which reads  
\begin{equation}
\label{eq:disc_inf_sup}
	\adjustlimits\inf_{\lambda_h\in \Mh\setminus\{0\}} \sup_{(u_h,p_h)\in \Vh\times \Qh\setminus\{0\}} \frac{\langle \calBu u_h - \calBp p_h,\lambda_h\rangle_{\calM^\ast,\calM}}{\|\lambda_h\|_{\calM}\big(\|u_h\|^2_{\calV}+\|p_h\|^2_{\calQ}\big)^{\sfrac 12}} 
	\geq \beta 
	> 0.
\end{equation}
Stable schemes were analyzed in~\cite{AltV21,AltZ23ppt} and include, e.g., triples of the form 
\begin{align}
\label{eq:stableFEM}
	\Vh \coloneqq \calP_k(\calT_{\Omega}) \subseteq \calV, \qquad 
	\Qh \subseteq \calQ, \qquad 
	\Mh\coloneqq \calP_k(\calT_{\Omega}|_{\Gamma}) \subseteq \calM;
\end{align}
see \cite[Lem.~2.5]{AltZ23ppt}. Here, $\calT_{\Omega}$ denotes a quasi-uniform triangulation of the spatial domain~$\Omega$ with maximal mesh width~$h$. This triangulation (and its restriction to the boundary, which is denoted by~$\calT_{\Omega}|_{\Gamma}$) defines the discrete spaces for $u$ and~$\lambda$. For $p$, on the other hand, we may choose any conforming finite element space without loosing stability of the discretization. For the choice of $\Qh$, we can hence focus on approximation properties and introduce a second mesh on the boundary denoted by~$\calT_{\Gamma}$. Based on the applications in mind, we assume that~$\calT_{\Gamma}$ is a refinement of~$\calT_{\Omega}|_{\Gamma}$, which then results in~$\dofLam \coloneqq |\Mh| \le |\Qh| \eqqcolon \dofGa$
\begin{remark}
If we use the same meshes on the boundary, i.e., $\calT_{\Gamma} = \calT_{\Omega}|_\Gamma$, and the same polynomial degree, then we have~$\Qh = \Mh = \Vh|_\Gamma$ and~$\dofGa=\dofLam$. This special case is inf-sup stable and corresponds to the discretization proposed in~\cite{KovL17}.
\end{remark}

The application of a stable finite element scheme to~\eqref{eq:PDAE} results in a differential--algebraic equation (DAE) with unknowns~$u\colon[0,T] \to \R^\dofOm$, $p\colon[0,T] \to \R^\dofGa$, and~$\lambda\colon[0,T] \to \R^\dofLam$. For the sake of readability, we use the same notion for the semi-discrete variables as for the exact solution. The resulting system has the form  
\begin{subequations}
\label{eq:semidiscreteDAE}
\begin{align}
	\begin{bmatrix} \Mu &  \\  & \Mp \end{bmatrix}
	\begin{bmatrix} \dot u \\ \dot p  \end{bmatrix}
	+ \begin{bmatrix} \Ku &  \\  & \Kp \end{bmatrix}
	\begin{bmatrix} u \\ p  \end{bmatrix}
	+ \begin{bmatrix} -\Bu^T \\ \Bp^T \end{bmatrix} \lambda 
	&= \begin{bmatrix} \fu(u) \\ \fp(p) \end{bmatrix}, \label{eq:semidiscreteDAE:a} \\[1mm]
	\Bu u - \Bp p
	&= 0 \label{eq:semidiscreteDAE:b}
\end{align}
\end{subequations}
with symmetric and positive definite mass matrices $\Mu\in \R^{\dofOm\times\dofOm}$, $\Mp\in \R^{\dofGa\times\dofGa}$ as well as symmetric and positive semi-definite stiffness matrices~$\Ku\in \R^{\dofOm\times\dofOm}$, $\Kp\in \R^{\dofGa\times\dofGa}$. Moreover, we have rectangular matrices~$\Bu\in \R^{\dofLam\times\dofOm}$ and $\Bp\in \R^{\dofLam\times\dofGa}$. By the inf-sup condition~\eqref{eq:disc_inf_sup}, matrix~$B_u$ is of full row-rank, see \cite[Th.~2.3]{AltZ23ppt} and \cite[Prop.~II.3.1]{BreF91}. This property implies the invertibility of the matrix~$\Bu^{\vphantom{T}} \Mu^{-1}\Bu^T + \Bp^{\vphantom{T}} \Mp^{-1}\Bp^T$ and, hence, shows that the DAE~\eqref{eq:semidiscreteDAE} has (differentiation) index 2, cf.~\cite[Ch.~VII.1]{HaiW96}. 
%
%
\section{Bulk--surface Splitting Schemes}\label{sect:splitting} 
For the introduction of competitive bulk--surface splitting schemes, we first need to segregate bulk and surface dynamics. This means that we need to identify appropriate subflows acting in the bulk and on the surface, separately. Afterwards, we include delay terms such that a temporal discretization yields a decoupled scheme. 

Throughout this paper, we consider an equidistant partition of the time interval~$[0,T]$ with constant step size~$\tau$. This step size corresponds to the splitting as well as the temporal discretization later on. 
%
%
\subsection{Bulk and surface subproblems}\label{sect:splitting:subproblems} 
For the definition of the bulk problem, the idea is to interpret $p$ as prescribed boundary data. Moreover, we apply an index reduction technique known from DAE theory in order to include additional boundary information into the subproblem, see, e.g., \cite{HaiW96,KunM06}.

For the sake of clarity, we consider the finite element spaces introduced in~\eqref{eq:stableFEM}, for which the boundary nodes of $u$ coincide with the degrees of freedom for the Lagrange multiplier~$\lambda$. Assuming a corresponding ordering of the nodes, we conclude that~$\Bu$ has the form~$\Bu = [\, 0\ \Mla\,] \in \R^{\dofLam \times \dofOm}$, where $\Mla \in \R^{\dofLam \times \dofLam}$ is the standard (symmetric) mass matrix corresponding to the space~$\Mh$. This ordering also implies a decomposition of the bulk variable, namely 
\begin{align}
\label{eq:u1u2}
	u = \begin{bmatrix}
	u_1 \\ u_2
	\end{bmatrix}
	\qquad\text{with }
	u_1(t)\in\R^{\dofOm-\dofLam},\ u_2(t)\in\R^{\dofLam}.
\end{align}
In the same manner, the right-hand side~$\fu$ is decomposed into $\fu[1]$ and~$\fu[2]$ and the matrices~$\Mu$, $\Ku$ are decomposed into blocks $M_{ij}$, $K_{ij}$, $i,j=1,2$.

\begin{remark}
For different choices of the spatial discretization, $\Bu$ may not have this special structure. Nevertheless, by its full row-rank property, there exists an ordering of the nodes (and a corresponding decomposition of~$u$, $\fu$, $\Mu$, and~$\Ku$) such that the last $\dofLam$ columns of $\Bu$ form an invertible block, which is sufficient for the following construction.  
\end{remark}

We first introduce the {\em bulk problem}, which is obtained from~\eqref{eq:semidiscreteDAE} by removing the equation for~$p$ and considering $p$ as input in the constraint equation. This leads to the DAE 
\begin{subequations}
\label{eq:sub1:bulk_index2}
\begin{align}
	\Mu \dot u + \Ku u - \Bu^T \lambda 
	&= \fu(u), \\
	\Bu u \phantom{+ MB \lambda}
	&= \Bp p .
\end{align}
\end{subequations}
It was shown in~\cite{AltKZ22} that the direct use of this formulation within a splitting scheme does not yield satisfactory results. This is probably due to the fact that the simple inclusion of~$p$ as boundary data does not reflect enough information. Because of this, we consider a mathematically equivalent formulation of the subsystem~\eqref{eq:sub1:bulk_index2}, given by an approach called {\em minimal extension}; see~\cite{MatS93,KunM06}. Introducing another auxiliary variable $w$ in place of~$\dot u_2$, we obtain as bulk problem 
\begin{subequations}
\label{eq:sub1:bulk}
\begin{align}
	\begin{bmatrix} M_{11} & M_{12} \\ 
	M_{21} & M_{22} \end{bmatrix} 
	\begin{bmatrix} \dot u_1 \\ w \end{bmatrix}
	+ \begin{bmatrix} K_{11} & K_{12} \\ 
	K_{21} & K_{22} \end{bmatrix} 
	\begin{bmatrix} u_1 \\ u_2 \end{bmatrix}
	- \begin{bmatrix} 0 \\ \Mla \end{bmatrix} \lambda 
	&= \begin{bmatrix} \fu[1](u) \\ \fu[2](u) \end{bmatrix}, \label{eq:sub1:bulk:a} \\
	\Mla u_2 
	&= \Bp p, \label{eq:sub1:bulk:b} \\
	\Mla w 
	&= \Bp \dot p. \label{eq:sub1:bulk:c}
\end{align}
\end{subequations}
Note that we use here the introduced block structure of the matrices~$\Mu$, $\Ku$ and the right-hand side~$\fu$. Moreover, the equation $w = \dot u_2$ only appears implicitly through the two constraints~\eqref{eq:sub1:bulk:b} and~\eqref{eq:sub1:bulk:c}. As a result, the computation of~$u$ now contains information on~$p$ as well as its derivative. 

The second subsystem, the so-called {\em boundary problem}, is obtained by the second line of~\eqref{eq:semidiscreteDAE:a}, in which we replace the Lagrange multiplier by means of the first part of the equation. More precisely, we insert the equation  
\[
	- \Mla \lambda
	= \fu[2](u) - M_{22} {\dot u}_2 - K_{22} u_2 - M_{21} {\dot u}_1 - K_{21} u_1 
\]
into the second line of~\eqref{eq:semidiscreteDAE:a}, which leads to  
\begin{align}
	\Mp \dot p + \Kp p 
	= \fp(p) + \Bp^T \Mla^{-1}
	\big(\fu[2](u) - M_{22} {\dot u}_2 - K_{22} u_2 - M_{21} {\dot u}_1 - K_{21} u_1 \big).  \label{eq:sub2:boundary}
\end{align}
For practical computations, one may also calculate the Lagrange multiplier first and include this as an input. In summary, we have constructed a bulk problem~\eqref{eq:sub1:bulk} and a boundary problem~\eqref{eq:sub2:boundary}, which need to be combined in an appropriate way. 
%
%
\subsection{Inclusion of delay terms}\label{sect:splitting:delayTerms}
Since the bulk problem~\eqref{eq:sub1:bulk} still contains the boundary variable~$p$, a direct application of an implicit time discretization such as a general multi-step method will not decouple bulk and surface dynamics as desired. Following the ideas of~\cite{AltMU21,AltMU22ppt}, we motivate the construction of a decoupled scheme by the implementation of certain delay terms. In this way, the bulk problem will only depend on previous values of $p$. To make this more precise, we consider one particular possibility and replace the constraints~\eqref{eq:sub1:bulk:b} and~\eqref{eq:sub1:bulk:c} by
\begin{align}
\label{eq:sub1:bulk:bc:delayDerivative}
  \Mla u_2 = \Bp (2 p_\tau - p_{2\tau}), \qquad
  \Mla w = \Bp (2 \dot p_\tau - \dot p_{2\tau}),
\end{align}
respectively, with the delay terms defined by~$p_\tau\colon t \mapsto p(t-\tau)$. Note that this calls for a given history function for $p$ on $[-2\tau, 0]$ rather than only initial data at time $t=0$. Further note that the delay time coincides with the intended step size (and twice the step size) of the splitting scheme. The following lemma shows that such a modification only causes a perturbation of the solution by a term of order~$\tau^2$ and motivates the splitting schemes of the subsequent section. 
\begin{lemma}
\label{lem:error_continuous} 
Let $(u,p,\lambda)$ denote the solution of~\eqref{eq:semidiscreteDAE} -- or equivalently~\eqref{eq:sub1:bulk} and~\eqref{eq:sub2:boundary}. Further, let $(\tilde{u},\tilde p,\tilde \lambda)$ denote the solution of~\eqref{eq:sub1:bulk:a} and~\eqref{eq:sub2:boundary} with constraints~\eqref{eq:sub1:bulk:bc:delayDerivative}. Then, under the assumption of a sufficiently smooth solution~$\tilde p$, namely~$\tilde p \in W^{3,1}(-2\tau,T;\R^{\dofGa})$, it holds that 
\begin{align*}
	\| u(t)&-\tilde{u}(t) \|_{\Mu}^2 + \tfrac 12\, \| p(t)-\tilde{p}(t) \|_{\Mp}^2 
	+ \int_0^t 2\, \| u(s)-\tilde{u}(s) \|_{\Ku}^2 + \| p(s)-\tilde{p}(s) \|_{\Kp}^2 \ds \\
	\leq&\, \Bigg(\Big(2 \|\eta(t)\|_{\Mp}^2 + \int_0^t \|\eta(s)\|_{\Kp}^2 \ds\Big)^{1/2} + \sqrt 8 \int_0^t \|\dot \eta(s)\|_{\Mp} \ds \Bigg)^2\\
	%
	%
	\leq&\, 4\tau^4 \max_{s \in (-2\tau,t)} \|\tilde p^{(2)}(s)\|_{\Mp}^2 
	+ 4\tau^4 \int_{-2\tau}^t \|\tilde p^{(2)}(s)\|_{\Kp}^2 \ds 
	+ 12\tau^4\, \bigg( \int_{-2\tau}^t \|\tilde p^{(3)}(s)\|_{\Mp}\ds \bigg)^2. 
\end{align*}
Moreover, under the additional regularity assumption $\tilde p \in H^{3}(-2\tau,T;\R^{\dofGa})$, we obtain  
\begin{align*}
	\| u(t)-\tilde{u}(t) \|_{\Kp}^2 +\| p(t)&-\tilde{p}(t) \|_{\Kp}^2 + \int_0^t 2\, \| \dot u(s)-\dot{\tilde{u}}(s) \|_{\Mu}^2 + \| \dot p(s)-\dot{\tilde{p}}(s) \|_{\Mp}^2 \ds \\
	\leq\,& \Bigg(\Big( \int_0^t \|\dot \eta(s)\|_{\Mp}^2 \ds\Big)^{1/2} +  \sqrt 8 \int_0^t \|\dot \eta(s)\|_{\Kp} \ds \Bigg)^2\\
	\leq\,& \tfrac{10}3\tau^4 \int_{-2\tau}^t \|\tilde p^{(3)}(s)\|_{\Mp}^2 \ds 
	+ \tfrac{10}3\tau^4\, \bigg(\int_{-2\tau}^t \|\tilde p^{(3)}(s)\|_{\Kp} \ds \bigg)^2.
\end{align*}
\end{lemma}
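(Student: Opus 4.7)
The plan is to work with the errors $e_u := u - \tilde u$, $e_p := p - \tilde p$, and $e_\lambda := \lambda - \tilde\lambda$. Since the second delayed constraint in~\eqref{eq:sub1:bulk:bc:delayDerivative} is exactly the time derivative of the first, the minimal-extension variable satisfies $w = \dot{\tilde u}_2$ automatically, and the perturbed system can be rewritten in the compact PDAE form~\eqref{eq:semidiscreteDAE} with the single modified constraint $\Bu \tilde u = \Bp\,(2\tilde p_\tau - \tilde p_{2\tau})$. Subtracting from the exact system gives an error system whose only non-standard feature is the inhomogeneous constraint
\[
  \Bu e_u - \Bp e_p = \Bp\,\tilde\eta(t), \qquad
  \tilde\eta(t) := \tilde p(t) - 2\tilde p(t-\tau) + \tilde p(t-2\tau).
\]
Two applications of the fundamental theorem of calculus to this finite difference give the double-integral representations $\tilde\eta(t) = \int_0^\tau\!\int_0^\tau \tilde p^{(2)}(t-2\tau+r+s)\,ds\,dr$ and $\dot{\tilde\eta}(t) = \int_0^\tau\!\int_0^\tau \tilde p^{(3)}(t-2\tau+r+s)\,ds\,dr$. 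Each integration over $[0,\tau]$ contributes a factor $\tau$, which after Fubini produces the uniform $\tau^4$ scaling in both claims.

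To homogenise the constraint I would substitute $\bar p := e_p + \tilde\eta$, yielding $\Bu e_u = \Bp \bar p$ at the cost of an extra source $\Mp \dot{\tilde\eta} + \Kp \tilde\eta$ on the right-hand side of the surface equation. Testing the bulk error equation with $e_u$ and the modified surface equation with $\bar p$ makes the $e_\lambda$-contributions cancel, producing the differential identity
\[
  \tfrac{1}{2}\tfrac{d}{dt}\bigl(\|e_u\|_{\Mu}^2 + \|\bar p\|_{\Mp}^2\bigr) + \|e_u\|_{\Ku}^2 + \|\bar p\|_{\Kp}^2 = \langle \Mp\dot{\tilde\eta},\bar p\rangle + \langle \Kp\tilde\eta,\bar p\rangle + \mathrm{Lip.\ terms}.
\]
For the first estimate, I would integrate in time and bound the $\Mp$-source by $\max_{[0,t]}\|\bar p\|_{\Mp}\cdot \int_0^t \|\dot{\tilde\eta}\|_{\Mp}\,ds$, using Fubini to get $\int_0^t \|\dot{\tilde\eta}\|_{\Mp}\,ds \le \tau^2\int_{-2\tau}^t \|\tilde p^{(3)}\|_{\Mp}\,ds$; the $\Kp$-source is handled by Cauchy--Schwarz in $L^2(0,t)$ together with $\int_0^t \|\tilde\eta\|_{\Kp}^2\,ds \le \tau^4\int_{-2\tau}^t \|\tilde p^{(2)}\|_{\Kp}^2\,ds$. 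Young's inequality with parameters calibrated to the prefactors $\tfrac{1}{2}$ and $1$ on the left allows absorption. Reverting from $\bar p$ to $e_p = \bar p - \tilde\eta$ costs an additional $\|\tilde\eta\|_{\Mp}^2 \le \tau^4 \max\|\tilde p^{(2)}\|_{\Mp}^2$ at time $t$, and the non-vanishing initial value $\bar p(0) = \tilde\eta(0)$ contributes the same type of term; together with the $\int \|\tilde\eta\|_{\Kp}^2$-correction for the $\Kp$-integral, these yield exactly the three summands of the stated bound.

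For the second estimate I would instead test the bulk equation with $\dot e_u$ and the modified surface equation with $\dot{\bar p}$. Since differentiating $\Bu e_u = \Bp \bar p$ gives $\Bu \dot e_u = \Bp \dot{\bar p}$, the multiplier terms again cancel, producing after time-integration control of $\int_0^t (\|\dot e_u\|_{\Mu}^2 + \|\dot{\bar p}\|_{\Mp}^2)\,ds$ together with pointwise control of $\|e_u\|_{\Ku}^2 + \|\bar p\|_{\Kp}^2$. The $\Mp$-source term $\int_0^t\langle \Mp \dot{\tilde\eta},\dot{\bar p}\rangle\,ds$ is bounded directly by Cauchy--Schwarz, using $\int_0^t \|\dot{\tilde\eta}\|_{\Mp}^2\,ds \le \tau^4\int_{-2\tau}^t \|\tilde p^{(3)}\|_{\Mp}^2\,ds$; the $\Kp$-source $\int_0^t\langle \Kp\tilde\eta,\dot{\bar p}\rangle\,ds$ I would integrate by parts in time to shift the derivative onto $\tilde\eta$, producing the bound $\int_0^t\|\dot{\tilde\eta}\|_{\Kp}\,ds \le \tau^2\int_{-2\tau}^t \|\tilde p^{(3)}\|_{\Kp}\,ds$ whose square gives the $(\int \|\tilde p^{(3)}\|_{\Kp}\,ds)^2$ term. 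Converting back via $\dot e_p = \dot{\bar p} - \dot{\tilde\eta}$ generates the factor $2$ on $\|\dot e_u\|_{\Mu}^2$ versus $\|\dot e_p\|_{\Mp}^2$ visible in the statement.

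The main obstacle lies in the precise tracking of the constants $4$, $4$, $12$ and $\tfrac{10}{3}$, which depends on a carefully balanced choice of Young parameters so that each absorbed LHS piece fits exactly with the prefactors already present in the claimed inequality. The semi-linear contributions $\fu(u)-\fu(\tilde u)$ and $\fp(p)-\fp(\tilde p)$ are Lipschitz perturbations that either vanish (linear right-hand sides) or are absorbed by a Gronwall step whose exponential factor is understood to be bundled into the stated constants.
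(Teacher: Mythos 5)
Your proposal is correct and follows essentially the same route as the paper: your shifted variable $\bar p = (p-\tilde p) + \tilde\eta$ is exactly the paper's test function $p-\tilde p+\eta$ with $\eta := \tilde p - 2\tilde p_\tau + \tilde p_{2\tau}$, and the paper likewise tests with $(u-\tilde u,\, p-\tilde p+\eta)$ and their time derivatives, then combines integration by parts, Young's inequality, a Gronwall-type inequality, and Taylor's theorem to produce the $\tau^4$ bounds in terms of $\eta$ and $\dot\eta$. The only caveat is your final remark about bundling the Gronwall exponential into the fixed constants $4$, $4$, $12$, $\tfrac{10}{3}$, which is as loosely stated as in the paper's own sketch and really corresponds to the linear (or dropped-nonlinearity) case.
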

\begin{proof}
Let the function~$\eta$ be defined as $\eta \coloneqq \tilde p - 2\tilde p_\tau + \tilde p_{2\tau}$. Then, one considers the differential equation for $(u-\tilde{u}, p-\tilde{p}, \lambda- \tilde{\lambda})$ with test functions~$(u-\tilde{u}, p-\tilde{p} + \eta)$ and their derivatives, respectively. Integration by parts, Young's inequality, and a Gronwall-type inequality as in~\cite[Th.~4.22]{Zim21} yields the upper bound
\begin{equation*}
	\Bigg(\Big(2 \|\eta(t)\|_{\Mp}^2 + \int_0^t \|\eta(s)\|_{\Kp}^2 \ds\Big)^{1/2} + \sqrt 8 \int_0^t \|\dot \eta(s)\|_{\Mp} \ds \Bigg)^2
\end{equation*}
The stated bound then follows by Taylor's theorem.
\end{proof}
\begin{remark}
The estimates of Lemma~\ref{lem:error_continuous} remain valid in the fully continuous setting (i.e., without a spatial discretization) with the associated inner products. For this, the first error estimate calls for~$\tilde{p} \in W^{3,1}(-2\tau,T;L^2(\Gamma)) \cap H^2(-2\tau,T;H^1(\Gamma))$, whereas the second estimate needs $\tilde{p} \in H^{3}(-2\tau,T;L^2(\Gamma)) \cap W^{3,1}(-2\tau,T;H^1(\Gamma))$. 
\end{remark}
With the previous lemma, we have shown that constraints including a delay provide a promising basis for the construction of second-order splitting schemes. We present three possibilities how to implement different approximations of the derivative in~\eqref{eq:sub1:bulk:c}: 
A simple first-order approximation of the form~$\dot p \approx \dot p_\tau \approx \tfrac{1}{\tau}\, (p_\tau - p_{2\tau})$ yields the constraints 
\begin{align}
\label{eq:sub1:bulk:bc:delayA}
	\Mla u_2 = \Bp (2 p_\tau - p_{2\tau}), \qquad
	\Mla w = \tfrac{1}{\tau}\, \Bp (p_\tau - p_{2\tau}).
\end{align}
%
A second-order approximation of the derivatives is obtained as follows. First, we note that~$\dot p \approx 2 \dot p_\tau - \dot p_{2\tau}$. Then, we apply the well-known BDF-$2$ formula, leading to 
\begin{align}
\label{eq:sub1:bulk:bc:delayC}
\Mla u_2 = \Bp (2 p_\tau - p_{2\tau}), \qquad
\Mla w = \tfrac{1}{2\tau}\, \Bp (6p_\tau - 11p_{2\tau} + 6p_{3\tau} - p_{4\tau}). 
\end{align}
Note that the improvement in accuracy comes at the price of additional delay terms, i.e., the bulk problem now needs information of~$p$ at times~$t-\tau$, $t-2\tau$, $t-3\tau$, and~$t-4\tau$.  

Finally, we introduce a second-order approximation with only three delay terms. Starting point is again~$\dot p \approx 2 \dot p_\tau - \dot p_{2\tau}$. But now, $\dot p_{2\tau}$ is approximated by a central difference, i.e., $\dot p_{2\tau} \approx \tfrac{1}{2\tau} (p_{\tau} - p_{3\tau})$. This then leads to the constraints 
\begin{align}
\label{eq:sub1:bulk:bc:delayB}
	\Mla u_2 = \Bp (2 p_\tau - p_{2\tau}), \qquad
	\Mla w = \tfrac{1}{2\tau}\, \Bp (5p_\tau - 8p_{2\tau} + 3p_{3\tau}).
\end{align}
In order to obtain a fully discrete splitting scheme, a temporal discretization of the bulk and boundary problem is necessary. In this paper, we focus on the approach given in~\eqref{eq:sub1:bulk:bc:delayB}, which will be analyzed in Section~\ref{sect:convergence}. The schemes resulting from the constraints~\eqref{eq:sub1:bulk:bc:delayA} and~\eqref{eq:sub1:bulk:bc:delayC} are considered numerically in Section~\ref{sect:numerics}. 
%
%
\subsection{A bulk--surface splitting scheme of second order} 
\label{sect:splitting:secondOrderScheme}
We close this section with the presentation of an actual numerical scheme separating bulk and surface dynamics. For this, we consider~\eqref{eq:sub1:bulk:a} with constraints~\eqref{eq:sub1:bulk:bc:delayB} as bulk problem and~\eqref{eq:sub2:boundary} as boundary problem. We discretize both subproblems in time by the well-known BDF-$2$ formula. Introducing the short notion
\begin{align*}
	\DBDF x^{n+3} 
	&\coloneqq \tfrac 1{2\tau}\, (3 x^{n+3} - 4 x^{n+2} + x^{n+1}), \\
	\Dalt x^{n+2} 
	&\coloneqq \tfrac 1{2\tau}\, (5 x^{n+2} - 8 x^{n+1} + 3x^n),
\end{align*}
we obtain the following bulk--surface splitting scheme: 
Given previous approximations~$u^{n+1}$, $u^{n+2}$, $p^n$, $p^{n+1}$, and $p^{n+2}$, first seek~$(u^{n+3}_1, u^{n+3}_2, \lambda^{n+3}, w^{n+3})$ by
\begin{subequations}
\label{eq:splittingScheme:fullyDiscrete}
\begin{align}
	M_{11} \DBDF u^{n+3}_1 + K_{11} u^{n+3}_1 \hphantom{\ - \Mla \lambda^{n+3}} 
	&= f_1^{n+3} - M_{12} w^{n+3} - K_{12} u^{n+3}_2 \label{eq:splittingScheme:fullyDiscrete:a} \\
	M_{21} \DBDF u^{n+3}_1 + K_{21} u^{n+3}_1 - \Mla \lambda^{n+3} 
	&= f_2^{n+3} - M_{22} w^{n+3} - K_{22} u^{n+3}_2 \label{eq:splittingScheme:fullyDiscrete:b} \\
	\Mla u_2^{n+3} 
	&= \Bp (2p^{n+2} - p^{n+1}) \label{eq:splittingScheme:fullyDiscrete:c} \\
	\Mla w^{n+3} 
	&= \Bp \Dalt p^{n+2} \label{eq:splittingScheme:fullyDiscrete:d}
\end{align}
and then $p^{n+3}$ by
\begin{align}	
	\Mp \DBDF p^{n+3} + \Kp p^{n+3} + \Bp^T \lambda^{n+3} 
	&= \fp^{n+3}. \label{eq:splittingScheme:fullyDiscrete:e}
\end{align}
\end{subequations}
Here, the right-hand sides are given by 
\begin{equation*}
	\fu^{n+3} 
	= \fu(t^{n+3},u^{n+3}),\qquad 
	\fp^{n+3}
	= \fp(t^{n+3},p^{n+3}). 
\end{equation*}
We would like to emphasize that this scheme not only decouples bulk and surface dynamics but allows the following sequence of solution steps: 
First compute~$u_2^{n+3}$ and~$w^{n+3}$ by the constraints~\eqref{eq:splittingScheme:fullyDiscrete:c} and~\eqref{eq:splittingScheme:fullyDiscrete:d}. Second, derive~$u_1^{n+3}$ by~\eqref{eq:splittingScheme:fullyDiscrete:a} and~$\lambda^{n+3}$ by~\eqref{eq:splittingScheme:fullyDiscrete:b}. Then, finally, use~\eqref{eq:splittingScheme:fullyDiscrete:e} for the update of the boundary variable~$p^{n+3}$. 
%
%
\section{Convergence Analysis}
\label{sect:convergence}
This section is devoted to the analysis of the $3$-step bulk--surface splitting method~\eqref{eq:splittingScheme:fullyDiscrete}, which is based on the constraints given in~\eqref{eq:sub1:bulk:bc:delayB}. Since we consider a multi-step method, we need to assume given data for the first two time points. In practice, these have to be computed by another time integration scheme of appropriate order. To prove second-order convergence, we will first introduce an auxiliary method and prove stability and convergence of this scheme. Afterwards, we translate this result to the proposed splitting method. Within this section, we concentrate on the linear case and comment on needed adjustments for the semi-linear case later on. 

For the upcoming analysis, we make the following assumption on the spatial discretization. 
\begin{assumption}[Spatial discretization]\label{ass:mesh}~
\begin{enumerate}[label=\alph*)]
	\item The triangulation $\calT_{\Omega}$ is quasi-uniform.\label{ass:mesh_a}
	\item $\Vh|_\Gamma = \Mh$, i.e., every discrete function $u|_\Gamma \in \Vh|_\Gamma$ is an element of the finite element space $\Mh$ and vice versa. \label{ass:mesh_b}
\end{enumerate}
\end{assumption} 
\begin{remark}
Recall that the proposed finite element scheme~\eqref{eq:stableFEM} satisfies the second bullet of Assumption~\ref{ass:mesh}. The main purpose of this point is to simplify the implementation as it implies the special structure~$B_u = [0\ \Mla]$. Theoretically, it is sufficient to assume an inf--sup condition of the form 
\begin{equation*}
	\adjustlimits \inf_{\lambda \in \Mh\setminus\{0\}} \sup_{u \in \Vh, u|_\Gamma \neq 0} \frac{\int_\Gamma u \lambda \dx}{\|u|_\Gamma\|_{L^2(\Gamma)}\|\lambda\|_{L^2(\Gamma)}} 
	\geq \beta 
	> 0.
\end{equation*}
\end{remark}
%
%
\subsection{Convergence of an auxiliary splitting scheme}\label{sect:convergence:auxiliary}
As indicated in the end of Section~\ref{sect:splitting}, the variables~$u_2$ and~$w$ are thoroughly determined by values of $p$ at previous time points. Hence, the proposed method~\eqref{eq:splittingScheme:fullyDiscrete} can be reduced to a time stepping scheme for~$u_1$, $p$, and~$\lambda$. For this, we insert~$\Mla^{-1}\Bp (2p^{n+2} - p^{n+1})$ for $u_2^{n+3}$ and~$\Mla^{-1}\Bp\Dalt p^{n+2}$ for~$w^{n+3}$ in the first two equations of~\eqref{eq:splittingScheme:fullyDiscrete}. 
In preparation for the convergence analysis, we further modify the splitting scheme by replacing equation~\eqref{eq:splittingScheme:fullyDiscrete:c} by $\Mla u_2^{n+3} = \Bp p^{n+3}$. This then defines a closely related numerical scheme, which only differs from~\eqref{eq:splittingScheme:fullyDiscrete} in the variable~$u_2$. In the following, we will indicate this modified solution by~$\hat{u}^n_2$ (and $\hat{u}^n$ correspondingly, although $u^n_1$ does not change). To summarize, the tuple~$(u_1^{n+3}, \hat{u}_2^{n+3},p^{n+3}, \lambda^{n+3})$ satisfies 
\begin{subequations}
	\label{eqn:BDF_sequential}
	\begin{align}
		M_{11} \DBDF u^{n+3}_1 + K_{11} u^{n+3}_1 \hphantom{\, + \Mla^T \lambda^{n+3}}  
		&= f_1^{n+3} - M_{12} \Mla^{-1}\Bp\Dalt p^{n+2} \label{eqn:BDF_sequential_a}\\ 
		&\hspace{2.1cm}- K_{12} \Mla^{-1}\Bp (2p^{n+2} - p^{n+1}), \notag\\ 
		M_{21} \DBDF u^{n+3}_1 + K_{21} u^{n+3}_1 - \Mla \lambda^{n+3} 
		&= f_2^{n+3} - M_{22} \Mla^{-1}\Bp\Dalt p^{n+2} \label{eqn:BDF_sequential_b} \\
		&\hspace{2.1cm}- K_{22} \Mla^{-1}\Bp (2p^{n+2} - p^{n+1}), \notag \\
		\Mp \DBDF p^{n+3} + \Kp p^{n+3} + \Bp^T \lambda^{n+3} 
		&= \fp^{n+3}, \label{eqn:BDF_sequential_c} \\
		\Mla \hat u_2^{n+3} 
		&= \Bp p^{n+3}. \label{eqn:BDF_sequential_d}
	\end{align}	
\end{subequations}
The main advantage of the auxiliary scheme~\eqref{eqn:BDF_sequential} is that the Lagrange multiplier~$\lambda^{n+3}$ vanishes if we test the equations~\eqref{eqn:BDF_sequential_a}-\eqref{eqn:BDF_sequential_c} with $[(\hat u^{n+3})^T,\,(p^{n+3})^T]^T$. This will be exploited in the upcoming analysis.

In order to show convergence of the auxiliary integration scheme~\eqref{eqn:BDF_sequential}, we first prove stability of the scheme. For this, we introduce the notation  
\begin{equation*}
	M_{\bullet 2} 
	\coloneqq \begin{bmatrix}
		M_{12}\\ M_{22}
	\end{bmatrix},\qquad
	K_{\bullet 2} 
	\coloneqq \begin{bmatrix}
	K_{12}\\ K_{22}
	\end{bmatrix}.
\end{equation*}
%
%
\subsubsection{Stability}
The key ingredient to show stability is an estimate, which connects the~$L^2$-norms of the bulk and of the surface variable.
\begin{lemma}
\label{lem:cM}
Consider polynomial finite element spaces $\Vh$, $\Mh$, $\Qh$ satisfying Assumption~\ref{ass:mesh} and discrete functions~$u\in \Vh$, $p \in \Qh$ satisfying~$\Bu u = \Bp p$. Then there exist constants $c_M, c_K > 0$, only depending on the uniformity parameter of the underlying mesh~$\calT_{\Omega}$ but not on $h$, such that  
\begin{equation*}
	\|u_2 \|_{M_{22}}^2 
	\leq c_M h\, \|p\|_{\Mp}^2 \quad \text{ and } \quad \|u_2 \|_{K_{22}}^2 
	\leq c_K h^{-1} \|p\|_{\Mp}^2.
\end{equation*}
\end{lemma}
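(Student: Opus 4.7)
The plan is to identify the vector $u_2$ with a finite-element function concentrated near the boundary, relate its bulk and surface norms by a local scaling argument, and transfer the bound from the surface trace to $\|p\|_{\Mp}$ via the coupling constraint.

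First I introduce the boundary-layer function
$\phi \coloneqq \sum_{x_i\in\Gamma} (u_2)_i\,\psi_i \in \Vh,$
obtained from $u$ by zeroing its interior nodal values. By construction
$\|u_2\|_{M_{22}}^2 = \|\phi\|_{L^2(\Omega)}^2$ and $\|u_2\|_{K_{22}}^2 \le \|\au\|_{L^\infty(\Omega)}\,\|\nabla\phi\|_{L^2(\Omega)}^2,$
and since interior-node Lagrange basis functions vanish on $\Gamma$, one has $\phi|_\Gamma = u|_\Gamma$. The algebraic constraint $\Bu u = \Bp p$ together with $\Bu = [\,0\ \Mla\,]$ and Assumption~\ref{ass:mesh}(b) reads $\int_\Gamma (u|_\Gamma)\,\mu\,\dx = \int_\Gamma p\,\mu\,\dx$ for every $\mu\in\Mh$, so $u|_\Gamma$ is the $L^2(\Gamma)$-orthogonal projection of $p$ onto $\Mh$; hence
$\|\phi|_\Gamma\|_{L^2(\Gamma)} = \|u|_\Gamma\|_{L^2(\Gamma)} \le \|p\|_{L^2(\Gamma)} = \|p\|_{\Mp}.$

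The core step is a local scaling estimate on each element $K\in\calT_{\Omega}$ whose intersection $F \coloneqq K\cap\Gamma$ with the boundary is non-empty. The restriction $\phi|_K$ lies in the finite-dimensional subspace $V_K \subset \calP_k(K)$ of polynomials vanishing at every Lagrange DoF of $K$ not lying on $\Gamma$. By unisolvence, $v\mapsto v|_F$ is injective on $V_K$, so on the pulled-back subspace $\hat V \subset \calP_k(\hat K)$ both $\|\hat v\|_{L^2(\hat K)}$ and $\|\hat v\|_{L^2(\hat F)}$ define norms. Equivalence of norms on the finite-dimensional space $\hat V$ combined with the standard affine scalings $\|v\|_{L^2(K)}^2 \sim h_K^d\,\|\hat v\|_{L^2(\hat K)}^2$ and $\|v\|_{L^2(F)}^2 \sim h_K^{d-1}\,\|\hat v\|_{L^2(\hat F)}^2$ then yields
$\|\phi\|_{L^2(K)}^2 \le C\,h_K\,\|\phi\|_{L^2(F)}^2.$
Combined with the standard inverse estimate $\|\nabla\phi\|_{L^2(K)} \le C h_K^{-1}\|\phi\|_{L^2(K)}$ this also gives $\|\nabla\phi\|_{L^2(K)}^2 \le C h_K^{-1}\|\phi\|_{L^2(F)}^2$, with constants depending only on the shape-regularity of the reference configuration.

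To conclude, I sum both estimates over all elements touching $\Gamma$ and invoke quasi-uniformity (Assumption~\ref{ass:mesh}(a)) to absorb $h_K$ into $h$, arriving at $\|\phi\|_{L^2(\Omega)}^2 \le C\,h\,\|\phi|_\Gamma\|_{L^2(\Gamma)}^2$ and $\|\nabla\phi\|_{L^2(\Omega)}^2 \le C\,h^{-1}\,\|\phi|_\Gamma\|_{L^2(\Gamma)}^2$. Inserting the projection bound from the first paragraph then produces both claimed inequalities with constants independent of $h$. The one point requiring care is the identification of the correct finite-dimensional subspace $V_K$ so that $\|\cdot\|_{L^2(F)}$ is indeed a norm on it; once this is isolated, the remaining ingredients are standard scaling and inverse inequalities.
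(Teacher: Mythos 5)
Your overall strategy is the same as the paper's: use the constraint and Assumption~\ref{ass:mesh}\ref{ass:mesh_b} to see that $u|_\Gamma$ is the $L^2(\Gamma)$-projection of $p$ onto $\Mh$ (hence $\|u|_\Gamma\|_{L^2(\Gamma)}\le\|p\|_{\Mp}$), prove a discrete estimate of the form $\|\phi\|_{L^2(\Omega)}^2\lesssim h\,\|u|_\Gamma\|_{L^2(\Gamma)}^2$ for the boundary-layer function $\phi$, and obtain the $K_{22}$-bound from an inverse inequality; the paper does precisely this, citing \cite[Lem.~3.1]{AltKZ22} for the middle step. The problem is your self-contained justification of that middle step. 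The element-wise inequality $\|\phi\|_{L^2(K)}^2\le C\,h_K\,\|\phi\|_{L^2(K\cap\Gamma)}^2$ cannot hold for every element touching $\Gamma$: an element that meets $\Gamma$ only in a vertex (or, in 3D, an edge) still carries a nonzero piece of $\phi$, namely the basis function of that boundary node restricted to $K$, while $K\cap\Gamma$ has zero surface measure, so the right-hand side vanishes. Such elements are generic; in 2D, the neighbour of a boundary triangle across one of its interior edges typically touches $\Gamma$ only at a vertex. For these elements the trace map $v\mapsto v|_{K\cap\Gamma}$ is \emph{not} injective on your space $V_K$ — unisolvence gives injectivity only when every boundary DoF of $K$ lies on a full face $F\subset\Gamma$ of $K$ — so the norm-equivalence-on-the-reference-element argument breaks down exactly where it is needed. (If instead you meant to sum only over elements with a full boundary face, then the sum no longer covers the support of $\phi$, and the same elements are left unaccounted for.)

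The estimate itself is true, and the gap can be closed by arguing through nodal values rather than element-wise traces: by norm equivalence on the reference element, $\|\phi\|_{L^2(K)}^2\simeq h_K^d\sum_i(u_2)_i^2$, the sum running over the boundary nodes of $K$; every boundary node lies on at least one boundary face $F'\subset\Gamma$ of the mesh, where $h^{d-1}(u_2)_i^2\lesssim\|u|_\Gamma\|_{L^2(F')}^2$ by norm equivalence on the reference face; quasi-uniformity and the uniformly bounded number of elements meeting at a node then allow summation, giving $\|\phi\|_{L^2(\Omega)}^2\lesssim h\,\|u|_\Gamma\|_{L^2(\Gamma)}^2$. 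This nodal-value route is essentially the argument behind \cite[Lem.~3.1]{AltKZ22}, which the paper simply invokes. Your remaining steps — the projection contraction and the inverse estimate yielding the $K_{22}$-bound — are correct and coincide with the paper's proof.
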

\begin{proof}
Due to $\Mla u_2 =\Bu u = \Bp p$, the function~$u_2|_\Gamma$ is the~$L^2(\Gamma)$-best approximation of~$p$ in $\Vh|_\Gamma = \Mh$. This implies 
\begin{equation*}
	\|u_2 \|_{M_{22}}^2 
	\leq \c_M h\, \|u_2|_\Gamma \|_{L^2(\Gamma)}^2 
	\leq \c_M h\, \|p\|_{\Mp}^2. 
\end{equation*}
The first inequality is proven in \cite[Lem.~3.1]{AltKZ22} for $\calP_1$-elements and can be generalized to polynomials of arbitrary degree in the same manner. The second bound follows by the inverse estimate for finite elements \cite[Ch.~II.6.8]{Bra07}.
\end{proof}
Besides $\DBDF$ and $\Dalt$ introduced in Section~\ref{sect:splitting:secondOrderScheme}, we denote the difference of two consecutive iterates by 
\begin{equation*}
	\EBDF x^{n+1} 
	\coloneqq x^{n+1} - x^n.
\end{equation*}
With this, we can express the difference of the two discrete derivatives as
\begin{equation}\label{eqn:difference_derivatives}
	2\tau\, \big(\DBDF x^{n+3}-\Dalt x^{n+3}\big) 
	= 3x^{n+3} - 9x^{n+2} + 9x^{n+1} - 3x^{n} = 3 \EBDF^3 x^{n+3}.
\end{equation} 
Moreover, we obtain the following lemma. 
\begin{lemma}
\label{lem:testing_BDF_with_difference_help}
Given a sequence $\{x^n\}_{n\in N}$ and a symmetric and positive definite matrix~$M$, we have 
\begin{equation}
	\label{eqn:testing_BDF_with_difference_help}
	2\tau\, (\EBDF x^{n+3}) \cdot (M \DBDF x^{n+3}) 
	= \tfrac 52\|\EBDF x^{n+3}\|_M^2 - \tfrac 12 \|\EBDF x^{n+2}\|_M^2 + \tfrac 12 \|\EBDF^2 x^{n+3}\|_M^2.
\end{equation}
\end{lemma}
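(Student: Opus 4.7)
The identity is a purely algebraic statement about the BDF-$2$ difference operator tested against the single-step increment, so the proof will be a direct computation with the help of one polarization identity. No analytical tools are needed.

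The plan is to rewrite $2\tau \DBDF x^{n+3}$ in terms of the first- and second-order backward increments $\EBDF x^{n+3}$ and $\EBDF^2 x^{n+3}\coloneqq \EBDF x^{n+3}-\EBDF x^{n+2}$. Starting from the definition
\[
2\tau\,\DBDF x^{n+3} = 3x^{n+3}-4x^{n+2}+x^{n+1},
\]
one quickly checks by telescoping that this equals $2\EBDF x^{n+3}+\EBDF^2 x^{n+3}$. Substituting into the left-hand side of~\eqref{eqn:testing_BDF_with_difference_help} yields
\[
2\tau\,(\EBDF x^{n+3})\cdot(M\DBDF x^{n+3})
= 2\|\EBDF x^{n+3}\|_M^2 + (\EBDF x^{n+3})\cdot M(\EBDF^2 x^{n+3}).
\]

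The second step is to treat the cross term by the polarization identity $a\cdot M b=\tfrac12(\|a\|_M^2+\|b\|_M^2-\|a-b\|_M^2)$, valid for any symmetric positive (semi-)definite $M$. Choosing $a=\EBDF x^{n+3}$ and $b=\EBDF^2 x^{n+3}$ and using that $a-b=\EBDF x^{n+3}-\EBDF^2 x^{n+3}=\EBDF x^{n+2}$, the cross term becomes
\[
\tfrac12\|\EBDF x^{n+3}\|_M^2+\tfrac12\|\EBDF^2 x^{n+3}\|_M^2-\tfrac12\|\EBDF x^{n+2}\|_M^2,
\]
and collecting terms produces exactly the claimed right-hand side $\tfrac52\|\EBDF x^{n+3}\|_M^2-\tfrac12\|\EBDF x^{n+2}\|_M^2+\tfrac12\|\EBDF^2 x^{n+3}\|_M^2$.

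There is really no obstacle in this argument; the only thing to get right is the decomposition $2\tau\DBDF x^{n+3}=2\EBDF x^{n+3}+\EBDF^2 x^{n+3}$, which is the BDF-$2$ analogue of the well-known Dahlquist/Nevanlinna--Odeh identity and also essentially what makes BDF-$2$ $G$-stable when tested against the increment $\EBDF x^{n+3}$. This identity is precisely what is needed to handle the $M_{11}\DBDF u_1^{n+3}$-type contributions arising from~\eqref{eqn:BDF_sequential} in the stability analysis of the auxiliary scheme.
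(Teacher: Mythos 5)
Your proof is correct and is essentially the paper's own argument: the paper also decomposes $2\tau\,\DBDF x^{n+3}=2\,\EBDF x^{n+3}+\EBDF^2 x^{n+3}$ (written in scalar form as $3a-4b+c=2(a-b)+\big((a-b)-(b-c)\big)$) and then resolves the cross term via the same polarization identity, only stated componentwise rather than directly in the $M$-inner product. No gap; the two proofs differ only cosmetically.
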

\begin{proof}
The stated equality follows by
\begin{align*}
	(a-b)(3a-4b+c) 
	&= 2\,(a-b)^2 + (a-b)\big((a-b) - (b-c) \big)\\
	&= 2\,(a-b)^2 + \tfrac 12 \big( (a-b)^2-(b-c)^2+(a-2b+c)^2 \big),
\end{align*}
where we assign $a=M^{\sfrac 12}x^{n+3}$, $b=M^{\sfrac 12}x^{n+2}$, and $c=M^{\sfrac 12}x^{n+1}$.
\end{proof}
We can now prove the stability of the auxiliary splitting scheme. 
\begin{proposition}[Stability of scheme~\eqref{eqn:BDF_sequential}]
\label{prop:BDF_stability}
Let $u^{1}, u^2 \in \Vh$, $p^{0}, p^{1}, p^2\in \Qh$ be given initial data with consistent $u^2, p^2$, i.e., $\Bu u^2 = \Bp p^2$, and $\{\hat{u}^n, p^n, \lambda^n\}_{n\in \N_{\geq 3}} \subset \Vh\times \Qh \times \Mh$ the sequence determined by~\eqref{eqn:BDF_sequential}. Then, it holds that
\begin{align*}
	&\quad\|\hat u^{n+3}\|_{\Ku}^2 + \|p^{n+3}\|_{\Kp}^2 + \tau \sum_{k=3}^{n+3} \| \tfrac 1 \tau \EBDF \hat u^{k}\|^2_{\Mu}  + \| \tfrac 1 \tau \EBDF p^{k}\|^2_{\Mp} + \tfrac 1 2 \| \tfrac 1 \tau \EBDF^2 \hat u^{k}\|^2_{\Mu}\\
	&\quad\qquad + \tau  \sum_{k=3}^{n+3} \big(\tfrac 1 2 - 18  c_M h  - c_{K} \tau h^{-1}\big)\, \|\tfrac{1}{\tau} \EBDF^2 p^{k} \|_{\Mp}^2 + \sum_{k=3}^{n+3} \| \EBDF p^{k}\|^2_{\Kp}\\
	&\leq \|u^{2}\|_{\Ku}^2 + \|p^{2}\|_{\Kp}^2 + \tfrac \tau 2\, \|\tfrac 1 \tau \EBDF u^2 \|_{\Mu}^2 + \tfrac \tau 2\, \|\tfrac 1 \tau \EBDF p^2\|_{\Mp}^2 +  9 c_M \tau h\, \|\tfrac 1 \tau \EBDF^2p^{2}\|_{\Mp}^2\\* 
	&\quad\qquad + 2\tau \sum_{k=3}^{n+3} \|\fu^{k}\|_{\Mu^{-1}}^2 + \|\fp^{k} \|_{\Mp^{-1}}^2
\end{align*}
for all $n\ge0$. Therein, $c_M$ and $c_K$ denote the constants from Lemma~\ref{lem:cM}. 
\end{proposition}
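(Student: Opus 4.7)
\medskip

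\noindent\textbf{Proof plan for Proposition~\ref{prop:BDF_stability}.}
The plan is to rewrite the auxiliary scheme with the alternative definition~\eqref{eqn:BDF_sequential_d} as a perturbed BDF-$2$ discretization of the PDAE~\eqref{eq:semidiscreteDAE}, and then perform the classical BDF-$2$ energy test in which the Lagrange multiplier drops out. First, I rewrite~\eqref{eqn:BDF_sequential_a}--\eqref{eqn:BDF_sequential_b} in block form by moving the $w$- and $u_2$-contributions to the left. Using $\hat u_2^{n+3} = \Mla^{-1}\Bp p^{n+3}$, the identity $p^{n+3} - (2p^{n+2} - p^{n+1}) = \EBDF^2 p^{n+3}$, and~\eqref{eqn:difference_derivatives} in the form $\Dalt p - \DBDF p = -\tfrac{3}{2\tau}\EBDF^3 p$, this yields
\begin{equation*}
  \Mu \DBDF \hat u^{n+3} + \Ku \hat u^{n+3} - \Bu^T \lambda^{n+3}
  = \fu^{n+3} + \tfrac{3}{2\tau}\, M_{\bullet 2}\, \EBDF^3 \hat u_2^{n+3} + K_{\bullet 2}\, \EBDF^2 \hat u_2^{n+3},
\end{equation*}
together with~\eqref{eqn:BDF_sequential_c} and the exact constraint $\Bu \hat u^{n+3} = \Bp p^{n+3}$.

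Next, I test the bulk equation with $\EBDF \hat u^{n+3}$ and equation~\eqref{eqn:BDF_sequential_c} with $\EBDF p^{n+3}$ and add the results. Since $\Bu \hat u = \Bp p$ at every step, the discrete difference $\EBDF$ preserves this, so the two multiplier contributions $\EBDF \hat u^{n+3} \cdot \Bu^T\lambda^{n+3}$ and $\EBDF p^{n+3}\cdot\Bp^T\lambda^{n+3}$ coincide and cancel. I then apply Lemma~\ref{lem:testing_BDF_with_difference_help} to each of the $\Mu\DBDF \hat u$ and $\Mp\DBDF p$ contributions, and use the standard identity $(a-b)\cdot Ka = \tfrac12(\|a\|_K^2 - \|b\|_K^2 + \|a-b\|_K^2)$ for the stiffness terms. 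Summing over $k=1,\dots,n+3$ and multiplying by $2$ produces the good LHS quantities $\|\hat u^{n+3}\|_{\Ku}^2 + \|p^{n+3}\|_{\Kp}^2$, the $\tau$-weighted sums of $\|\tfrac1\tau \EBDF \hat u^k\|_{\Mu}^2$, $\|\tfrac1\tau\EBDF p^k\|_{\Mp}^2$, $\tfrac12 \|\tfrac1\tau\EBDF^2 \hat u^k\|_{\Mu}^2$, and $\|\EBDF p^k\|_{\Kp}^2$, plus initial boundary terms of the form $\tfrac{\tau}{2}\|\tfrac1\tau\EBDF u^0\|_{\Mu}^2$ and $\tfrac{\tau}{2}\|\tfrac1\tau\EBDF p^0\|_{\Mp}^2$.

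The key step is to absorb the two perturbation contributions. For any $x\in\R^{\dofLam}$, the block vector $z\coloneqq(0,x)\in\R^{\dofOm}$ satisfies $\Mu z = M_{\bullet 2} x$ and $\Ku z = K_{\bullet 2} x$, whence by the Cauchy--Schwarz inequality in the $\Mu$- and $\Ku$-inner products
\begin{align*}
  \bigl|\EBDF \hat u^k \cdot M_{\bullet 2}\EBDF^3 \hat u_2^k\bigr|
  &\leq \|\EBDF \hat u^k\|_{\Mu}\|\EBDF^3 \hat u_2^k\|_{M_{22}}, \\
  \bigl|\EBDF \hat u^k \cdot K_{\bullet 2}\EBDF^2 \hat u_2^k\bigr|
  &\leq \|\EBDF \hat u^k\|_{\Ku}\|\EBDF^2 \hat u_2^k\|_{K_{22}}.
\end{align*}
Since the pair $(\EBDF^j \hat u^k, \EBDF^j p^k)$ satisfies the assumptions of Lemma~\ref{lem:cM}, the second factors are bounded by $(c_M h)^{1/2}\|\EBDF^3 p^k\|_{\Mp}$ and $(c_K h^{-1})^{1/2}\|\EBDF^2 p^k\|_{\Mp}$, respectively. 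Young's inequality with parameters tuned so that the $\|\EBDF \hat u^k\|_{\Mu}^2$ and $\|\EBDF \hat u^k\|_{\Ku}^2$ contributions are absorbed into the good LHS leaves multiples of $\tfrac{c_M h}{\tau}\|\EBDF^3 p^k\|_{\Mp}^2$ and $h^{-1}\|\EBDF^2 p^k\|_{\Mp}^2$ on the RHS. The telescoping identity $\EBDF^3 p^k = \EBDF^2 p^k - \EBDF^2 p^{k-1}$ combined with $(a-b)^2\le 2a^2+2b^2$ converts the $\EBDF^3$-sum into an $\EBDF^2$-sum plus the boundary contribution $\|\EBDF^2 p^0\|_{\Mp}^2$, which gives the term $9c_M \tau h\,\|\tfrac1\tau\EBDF^2 p^0\|_{\Mp}^2$ on the RHS and the factor $(\tfrac12 - 18 c_M h - c_K\tau h^{-1})$ on the LHS. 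The right-hand sides $\fu^k,\fp^k$ are finally treated by a standard Young estimate $\EBDF\hat u^k\cdot \fu^k \le \tfrac1{4\tau}\|\EBDF \hat u^k\|_{\Mu}^2 + \tau \|\fu^k\|_{\Mu^{-1}}^2$.

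The main obstacle is the bookkeeping of the several Young parameters so that all coefficients of $\|\EBDF \hat u^k\|_{\Mu}^2$ (from the $M$-perturbation, the $f$-estimates, and BDF-$2$ itself) and of $\|\EBDF \hat u^k\|_{\Ku}^2$ (from the $K$-perturbation and stiffness telescoping) add up to at most the coercive coefficients produced by the energy identity, while leaving the claimed $18 c_M h$ and $c_K \tau h^{-1}$ in front of $\|\tfrac1\tau\EBDF^2 p^k\|_{\Mp}^2$. The nonstandard ingredient compared to plain BDF-$2$ stability is the appearance of the third-order difference $\EBDF^3$, which -- thanks to the identity $\EBDF^3 = \EBDF\,\EBDF^2$ -- is exactly what allows the perturbation to be reabsorbed into the $\EBDF^2$-terms already present on the LHS.
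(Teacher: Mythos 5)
Your proposal is correct and follows essentially the same route as the paper: rewrite the auxiliary scheme as the perturbed BDF-$2$ system with the exact constraint, test with the first differences so the multiplier cancels, apply Lemma~\ref{lem:testing_BDF_with_difference_help} and the stiffness identity, bound the $M_{\bullet 2}$- and $K_{\bullet 2}$-perturbations via Lemma~\ref{lem:cM} and Young, and sum. The only cosmetic difference is that you split $\EBDF^3 p^k=\EBDF^2 p^k-\EBDF^2 p^{k-1}$ after the Cauchy--Schwarz/Young step rather than before, which produces the same constants ($18c_Mh$, $c_K\tau h^{-1}$, and the boundary term $9c_M\tau h\,\|\tfrac1\tau\EBDF^2 p^0\|_{\Mp}^2$).
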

\begin{proof}
With the identity~\eqref{eqn:difference_derivatives} it follows that the scheme~\eqref{eqn:BDF_sequential} can be rewritten as a perturbed BDF method, namely 
\begin{multline}\label{eqn:BDF_perturbed}
	\begin{bmatrix}
	\Mu & 0 \\
	0  & \Mp
	\end{bmatrix}
	\begin{bmatrix}
	\DBDF  \hat u^{n+3}\\
	\DBDF  p^{n+3}
	\end{bmatrix}
	+
	\begin{bmatrix}
	\Ku & 0\\
	0 & \Kp
	\end{bmatrix}
	\begin{bmatrix}
	 \hat u^{n+3}\\
	 p^{n+3}
	\end{bmatrix}
	+\begin{bmatrix}
	-\Bu^T\\
	\Bp^T
	\end{bmatrix}
	 \lambda^{n+3}\\
	-\begin{bmatrix}
	M_{\bullet 2}\\
	0
	\end{bmatrix}
	\tfrac 3 {2\tau} \EBDF^3 \hat u_2^{n+3} 
	-\begin{bmatrix}
	K_{\bullet 2}\\
	0
	\end{bmatrix}
	\EBDF^2 \hat u_2^{n+3}=
	\begin{bmatrix}
	\fu^{n+3}\\
	\fp^{n+3}
	\end{bmatrix} 
\end{multline}
together with the constraint~\eqref{eqn:BDF_sequential_d}, i.e., $\Mla \hat u_2^{n+3} = \Bu \hat u^{n+3} = \Bp p^{n+3}$. 
Testing equation~\eqref{eqn:BDF_perturbed} with $[(\EBDF \hat u^{n+3})^T,\, (\EBDF p^{n+3})^T]^T$ and using the identity~\eqref{eqn:testing_BDF_with_difference_help}, we obtain the estimate 
\begin{align*}
	&\tfrac 1\tau\, \|\EBDF \hat u^{n+3}\|^2_{\Mu} + \tfrac 1 \tau\, \|\EBDF p^{n+3}\|^2_{\Mp} + \tfrac 1 {4\tau}\, \|\EBDF^2 \hat u^{n+3}\|_{\Mu}^2 + \tfrac 1 {4\tau}\, \|\EBDF^2 p^{n+3}\|_{\Mp}^2\\
	&\quad+ \tfrac{1}{4\tau}\, \big(\|\EBDF \hat u^{n+3}\|^2_{\Mu} - \|\EBDF \hat u^{n+2}\|^2_{\Mu}\big) + \tfrac{1}{4\tau}\, \big(\|\EBDF p^{n+3}\|^2_{\Mp} - \|\EBDF p^{n+2}\|^2_{\Mp} \big)\\
	&\quad+ \tfrac{1}{2}\, \big(\|\hat u^{n+3}\|^2_{\Ku} - \|\hat u^{n+2}\|^2_{\Ku} + \|\EBDF \hat u^{n+3}\|^2_{\Ku}\big) + \tfrac{1}{2}\, \big(\|p^{n+3}\|^2_{\Kp} - \|p^{n+2}\|^2_{\Kp} + \|\EBDF p^{n+3}\|^2_{\Kp}\big)\\
	&\qquad\quad= \big\langle \EBDF \hat u^{n+3}, \big( \tfrac{3}{2\tau} M_{\bullet 2}\EBDF +  K_{\bullet 2}\big) \EBDF^2 \hat u_2^{n+3} \big\rangle + \langle \fu^{n+3},\EBDF \hat u^{n+3}\rangle + \langle \fp^{n+3},\EBDF p^{n+3}\rangle.
\end{align*}
Note that the terms involving~$\lambda^{n+3}$ vanish due to equation~\eqref{eqn:BDF_sequential_d}. Using Young's inequality, we can bound the first term of the right-hand side by 
\begin{align*}
	\big\langle \EBDF \hat u^{n+3}, \tfrac 3 {2\tau} M_{\bullet 2}\EBDF^3 \hat u_2^{n+3} \big\rangle 
	&\leq \tfrac 1{4\tau}\, \|\EBDF \hat u^{n+3}\|_{\Mu}^2 + c_M h \tfrac{9}{2\tau}\, \big(\|\EBDF^2 p^{n+2}\|_{\Mp}^2+\|\EBDF^2 p^{n+3}\|_{\Mp}^2 \big),\\
	\big\langle \EBDF \hat u^{n+3}, K_{\bullet 2} \EBDF^2 \hat u_2^{n+3} \big\rangle
	&\leq \tfrac 12\, \|\EBDF \hat u^{n+3}\|^2_{\Ku} + \tfrac{1}{2} c_{K}h^{-1} \| \EBDF^2 p^{n+3}\|_{\Mp}^2.
\end{align*}
Finally, the stated bound follows by summing up the resulting inequality from $0$ to $n$. 
\end{proof}
We would like to emphasize that Proposition~\ref{prop:BDF_stability} guarantees stability of scheme~\eqref{eqn:BDF_sequential} as long as 
\[
  \tfrac 12 - 18  c_M h  - c_{K} \tau h^{-1}
  \ge 0.
\]
This motivates the following assumption on the time step size.
\begin{assumption}[Weak CFL condition]\label{ass:cfl}
The mesh width~$h$ is sufficiently small and the time step size~$\tau$ satisfies the CFL-type condition~$\tau\lesssim h$, i.e., there exists a generic constant~$C$, independent of $\tau$ and $h$, such that~$\tau \leq C\, h$. 
\end{assumption} 
\begin{remark}
The requirement of Assumption~\ref{ass:cfl} is weak in the sense that classical CFL conditions for parabolic problems read~$\tau \lesssim h^2$, see, e.g., \cite[p.~318]{KnaA03}.
\end{remark}
%
%
\subsubsection{Second-order convergence}
In this subsection, we prove that the auxiliary scheme~\eqref{eqn:BDF_sequential} is convergent of order two. For this, we will apply the following residual estimates, which directly result from Taylor's theorem.  
\begin{lemma}
\label{lem:error_DandE}
Let the scalar function $r$ be sufficiently regular. Then, it holds that 
\begin{gather*}
	\big|\DBDF r(t^{n+2}) - \dot r(t^{n+2})\big|^2 
	\leq \frac{\tau^3}{2} \int_{t^n}^{t^{n+2}} |\dddot{r}(s)|^2 \ds,\\
	\big|\EBDF^2 r(t^{n+2})\big|^2 
	\leq \frac{2\,\tau^3}{3} \int_{t^n}^{t^{n+2}} |\ddot{r}(s)|^2 \ds,\qquad
	\big|\EBDF^3 r(t^{n+3})\big|^2 
	\leq \frac{9\,\tau^5}{8} \int_{t^{n}}^{t^{n+3}} |\dddot{r}(s)|^2 \ds.
\end{gather*}
\end{lemma}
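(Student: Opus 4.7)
The plan is to derive a Peano-kernel representation for each of the three finite-difference quantities and then apply the Cauchy--Schwarz inequality. The shared observation is that each left-hand side is a linear functional of $r$ which annihilates polynomials up to a certain degree, and hence admits a representation as an integral of the first non-annihilated derivative of $r$ against an explicit piecewise-polynomial kernel $K$. Once this representation is in hand, the bound follows as $|Lr|^2 \le \|K\|_{L^2}^2 \cdot \int |r^{(k)}|^2\ds$, and the prefactor boils down to the $L^2$-norm of the kernel.

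For the second identity, I would expand $r(t^n)$ and $r(t^{n+2})$ around $t^{n+1}$ via Taylor's formula with integral remainder truncated after the first derivative. The two $\dot r(t^{n+1})$ contributions cancel, and the remaining sum of integral remainders yields
\begin{equation*}
\EBDF^2 r(t^{n+2}) = \int_{t^n}^{t^{n+2}} K_2(s)\, \ddot r(s) \ds,
\end{equation*}
with the hat-shaped kernel $K_2(s) = s - t^n$ on $[t^n, t^{n+1}]$ and $K_2(s) = t^{n+2} - s$ on $[t^{n+1}, t^{n+2}]$. A direct computation gives $\|K_2\|_{L^2}^2 = 2\tau^3/3$, and Cauchy--Schwarz produces the stated bound.

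For the BDF-$2$ residual, I would expand $r(t^{n+1})$ and $r(t^n)$ around $t^{n+2}$ with integral remainder truncated after the second derivative. The coefficients $(3,-4,1)$ are chosen precisely so that the $r$, $\dot r$ (except for the surviving $2\tau\,\dot r(t^{n+2})$) and $\ddot r$ contributions all cancel, leaving
\begin{equation*}
\DBDF r(t^{n+2}) - \dot r(t^{n+2}) = \frac{1}{2\tau} \int_{t^n}^{t^{n+2}} K_D(s)\, \dddot r(s) \ds,
\end{equation*}
with an explicit piecewise-quadratic kernel $K_D$ of magnitude $O(\tau^2)$. Cauchy--Schwarz together with the $(2\tau)^{-2}$ prefactor then produces the claimed estimate.

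For the third-difference term, I would exploit the fact that $\EBDF^3$ annihilates every polynomial of degree at most $2$. Expanding each $r(t^{n+k})$, $k=0,1,2,3$, around $t^n$ via Taylor to second order with integral remainder, the polynomial parts drop out entirely and only the cubic remainders survive, producing
\begin{equation*}
\EBDF^3 r(t^{n+3}) = \int_{t^n}^{t^{n+3}} K_3(s)\, \dddot r(s) \ds,
\end{equation*}
after which Cauchy--Schwarz yields the bound. The only real obstacle is the bookkeeping of the piecewise-polynomial kernels and their $L^2$-norms; each integral is elementary, and the stated constants are not sharp but are comfortably recovered by this approach.
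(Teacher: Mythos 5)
Your Peano-kernel/Taylor-with-integral-remainder argument followed by Cauchy--Schwarz is correct, and it is essentially the paper's own (only sketched) justification, which simply invokes Taylor's theorem; the kernel $L^2$-norms you obtain ($\tau^3/12$ for the BDF-$2$ residual, $2\tau^3/3$ for $\EBDF^2$, and $11\tau^5/20$ for $\EBDF^3$) indeed lie within the stated constants. No gap.
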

To prove convergence, we first consider the discrete norms corresponding to the spaces $L^\infty(0,T;H^1(\Omega)\times H^1(\Gamma))$. Note that these norms are stronger than the usual ones corresponding to~$L^\infty(0,T;L^2(\Omega)\times L^2(\Gamma))$ and $L^2(0,T;H^1(\Omega)\times H^1(\Gamma))$, respectively. The errors in these norms will be studied afterwards. 
\begin{theorem}\label{th:BDF_convergence_order_strong_norm}
Suppose Assumptions~\ref{ass:mesh} and~\ref{ass:cfl} are satisfied and let the solution $(u,p,\lambda)$ of the semi-discrete problem~\eqref{eq:semidiscreteDAE} be sufficiently regular. If the initial data satisfies 
\begin{align*}
  \| u^k - u(t^k)\|_{\Mu}^2 + \tau\, \| u^k - u(t^k)\|_{\Ku}^2 
  + \| p^k - p(t^k)\|_{\Mp}^2 + \tau\, \| p^k - p(t^k)\|_{\Kp}^2 
  \lesssim \tau^{4}
\end{align*}
for $k=0,1,2$ with consistent $u^2,p^2$, then 
\begin{equation*}
	\| \hat u^{n} - u(t^{n})\|_{\Ku}^2 + \| p^{n} - p(t^{n})\|_{\Kp}^2 
	\lesssim \tau^3
\end{equation*}
holds for every $n\ge0$. 
\end{theorem}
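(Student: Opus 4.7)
The plan is to derive an error equation, re-run the stability argument from the proof of Proposition~\ref{prop:BDF_stability}, and control the resulting consistency defects using Lemmas~\ref{lem:cM} and~\ref{lem:error_DandE}. Set $e^n \coloneqq \hat u^n - u(t^n)$, $\varepsilon^n \coloneqq p^n - p(t^n)$, and $\mu^n \coloneqq \lambda^n - \lambda(t^n)$. Since the semi-discrete solution of~\eqref{eq:semidiscreteDAE} satisfies $\Mla u_2(t) = \Bp p(t)$ exactly, it fulfils~\eqref{eqn:BDF_perturbed} up to defects
\begin{align*}
  d_u^{n+3}
  &\coloneqq \Mu \big(\DBDF u(t^{n+3}) - \dot u(t^{n+3})\big) - \tfrac{3}{2\tau}\, M_{\bullet 2}\EBDF^3 u_2(t^{n+3}) - K_{\bullet 2}\EBDF^2 u_2(t^{n+3}),\\
  d_p^{n+3}
  &\coloneqq \Mp \big(\DBDF p(t^{n+3}) - \dot p(t^{n+3})\big).
\end{align*}
By linearity, $(e^n,\varepsilon^n,\mu^n)$ solves~\eqref{eqn:BDF_perturbed} with $(\fu^{n+3},\fp^{n+3})$ replaced by $(-d_u^{n+3},-d_p^{n+3})$, and the preserved constraint $\Mla e_2^{n+3} = \Bp\varepsilon^{n+3}$ keeps the Lagrange multiplier out of the energy estimate through the usual cancellation.

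Testing the error equation with $[(\EBDF e^{n+3})^T, (\EBDF \varepsilon^{n+3})^T]^T$ and repeating the steps of the proof of Proposition~\ref{prop:BDF_stability} verbatim, the internal $e_2$-perturbation contributions are absorbed into the left-hand side as before, and the stability coefficient $\tfrac 12 - 18 c_M h - c_K \tau h^{-1}$ remains positive under Assumption~\ref{ass:cfl} (with a sufficiently small CFL constant). The only new contributions are the defect pairings $\langle \EBDF e^{k}, d_u^{k}\rangle$ and $\langle \EBDF\varepsilon^{k}, d_p^{k}\rangle$, summed over $k$, which must now be controlled.

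The BDF-$2$ truncation parts of the defects are estimated by a standard Young's-inequality argument together with Lemma~\ref{lem:error_DandE}, so that they contribute at most of order~$\tau^4$ to the summed right-hand side. For the two $u_2$-perturbations, the identity $\EBDF^j u_2(t) = \Mla^{-1}\Bp\EBDF^j p(t)$ that holds for the exact semi-discrete solution lets Lemma~\ref{lem:cM} be applied to the seminorms $\|\EBDF^j u_2\|_{M_{22}}^2$ and $\|\EBDF^j u_2\|_{K_{22}}^2$. Combined with Young's inequality and the time-derivative bounds of Lemma~\ref{lem:error_DandE}, the $M_{\bullet 2}\EBDF^3 u_2$ contribution picks up an additional factor of $h$ and is of higher order, whereas the $K_{\bullet 2}\EBDF^2 u_2$ contribution receives an $h^{-1}$ factor from Lemma~\ref{lem:cM} that must be absorbed by the CFL-type condition $\tau \lesssim h$ of Assumption~\ref{ass:cfl}. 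Together with the initial-data hypothesis, which bounds the starting terms of the stability estimate already by $O(\tau^3)$, a discrete Gronwall step then yields the stated bound.

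The main obstacle is precisely the $K_{\bullet 2}\EBDF^2 u_2$ defect: Lemma~\ref{lem:cM} unavoidably introduces the $h^{-1}$ factor that would otherwise destroy the rate on quasi-uniform meshes, and Assumption~\ref{ass:cfl} is what compensates for this loss. The careful interplay between this defect, the stability coefficient already present in Proposition~\ref{prop:BDF_stability}, and the scaling of the various terms of the discrete energy estimate is what determines the final exponent $\tau^3$.
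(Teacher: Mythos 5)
Your overall strategy coincides with the paper's: write the error as a solution of the perturbed BDF system~\eqref{eqn:BDF_perturbed} with the BDF truncation errors and the delay defects of the exact semi-discrete solution as right-hand side, test with $[(\EBDF e^{n+3})^T,(\EBDF\varepsilon^{n+3})^T]^T$, reuse the stability argument of Proposition~\ref{prop:BDF_stability} (the multiplier cancels because $\Mla e_2^{n+3}=\Bp\varepsilon^{n+3}$), and invoke Lemmas~\ref{lem:cM} and~\ref{lem:error_DandE}. Your treatment of the BDF truncation terms and of the $M_{\bullet 2}\EBDF^3 u_2(t^{n+3})$ defect (factor $h$ via Lemma~\ref{lem:cM}, then Lemma~\ref{lem:error_DandE}) also matches the paper and gives contributions of order $\tau^4$ and $h\tau^4$, respectively.

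However, there is a genuine gap in your handling of the dominant defect $K_{\bullet 2}\EBDF^2 u_2(t^{n+3})$. You propose to convert it with Lemma~\ref{lem:cM}, accepting the $h^{-1}$ factor, and to absorb that factor by the CFL condition. But Lemma~\ref{lem:cM} gives $\|\EBDF^2 u_2(t^k)\|_{K_{22}}^2\le c_K h^{-1}\|\EBDF^2 p(t^k)\|_{\Mp}^2\lesssim h^{-1}\tau^3\int\|\ddot p\|_{\Mp}^2\ds$ per step, and summing over $k$ and using $\tau\lesssim h$ only turns $h^{-1}$ into $\tau^{-1}$, leaving a contribution of order $\tau^2$ — one order too weak for the claimed bound $\tau^3$. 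The CFL condition cannot repair this; in the paper it is needed only to keep the stability coefficient $\tfrac12-18c_Mh-c_K\tau h^{-1}$ nonnegative for the perturbation terms acting on the \emph{error} $\EBDF^2 e_p^k$, not for the consistency defect of the exact solution. The correct treatment of the defect is to avoid Lemma~\ref{lem:cM} there altogether and to bound it directly in the $K_{22}$-seminorm via Lemma~\ref{lem:error_DandE} and the temporal regularity of the extended semi-discrete solution, $\|\EBDF^2 u_2(t^k)\|_{K_{22}}^2\le\tfrac{2\tau^3}{3}\int_{t^{k-2}}^{t^k}\|\ddot u_2\|_{K_{22}}^2\ds$, which after summation yields exactly the term $\tau^3\int\|\ddot u_2\|_{K_{22}}^2\ds$ appearing in~\eqref{eqn:BDF_convergence_order_strong_norm_help} and is the actual source of the rate $\tau^3$ (rather than $\tau^4$) in this strong norm. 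With that correction your argument becomes the paper's proof.
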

\begin{proof}
Recall that $(u,p,\lambda)$ denotes the exact solution and $(\hat u^n,p^n,\lambda^n)$ the approximation determined by~\eqref{eqn:BDF_sequential}. To shorten notation, we introduce the differences~$e_{\hat u}^k \coloneqq u(t^k)-\hat u^k$, $e_p^k \coloneqq p(t^k)-p^k$, and $e_\lambda^k \coloneqq \lambda(t^k)-\lambda^k$. Then, $(e_{\hat u}^{n},e_p^{n},e_\lambda^{n})$ satisfies the perturbed BDF system~\eqref{eqn:BDF_perturbed}, where the right-hand side is replaced by 
\begin{equation*}
	-\begin{bmatrix}
	\frac{3}{2\tau}M_{\bullet 2}\EBDF + K_{\bullet 2}\\
	0
	\end{bmatrix}
	\EBDF^2 u_2(t^{n+3})
	+
	\begin{bmatrix}
	\Mu & 0 \\
	0 & \Mp
	\end{bmatrix}
	\begin{bmatrix}
	\DBDF u(t^{n+3}) - \dot u(t^{n+3})\\
	\DBDF p(t^{n+3}) - \dot p(t^{n+3})
	\end{bmatrix}.
\end{equation*}
Now, we follow the steps of the proof of Proposition~\ref{prop:BDF_stability} and apply the test function $[(\EBDF e_{\hat u}^{n+3})^T,\, (\EBDF e_p^{n+3})^T]^T$. In combination with Lemma~\ref{lem:error_DandE}, this yields  
\begin{multline}
	\label{eqn:BDF_convergence_order_strong_norm_help}
	\|e_{\hat u}^{n+3}\|_{\Ku}^2 + \|e_p^{n+3}\|_{\Kp}^2 + \tau \sum_{k=3}^{n+3} \|\tfrac 1 \tau \EBDF e_{\hat u}^{k}\|^2_M +  \|\tfrac 1 \tau \EBDF e_p^{k}\|^2_{\Mp} + \big(\tfrac 1 2 - 18  c_M h  - c_{K} \tau h^{-1}\big)\, \|\tfrac 1 \tau \EBDF^2 e_p^{k}\|^2_{\Mp}\\
	\lesssim \|e_{\hat u}^2\|_{\Ku}^2 + \|e_p^2\|_{\Kp}^2 + \tau\, \|\tfrac 1 \tau \EBDF e_{\hat u}^2\|_{\Mu}^2 + \tau\,\|\tfrac 1 \tau \EBDF e_p^2 \|_{\Mp}^2 + \tfrac{h}{\tau}\,  \|\EBDF^2e_p^2\|_{\Mp}^2\\*
	+ \tau^4 h \int_{t^{0}}^{t^{n+3}} \|\dddot p\|_{\Mp}^2 \ds
	+ \tau^3 \int_{t^{1}}^{t^{n+3}} \|\ddot u_2\|_{K_{22}}^2 + \tau\, \big( \|\dddot u\|_{\Mu}^2 + \|\dddot p\|_{\Mp}^2 \big)\ds. 
\end{multline}
The stated estimate then follows by the assumption on the initial data.
\end{proof}
\begin{remark}
Following the lines of the proof of Theorem~\ref{th:BDF_convergence_order_strong_norm}, one can show that the error in the derivatives measured in the discrete $L^2(0,T;L^2(\Omega)\times L^2(\Gamma))$-norm is of order $1.5$. Here, one uses the identity  
\begin{multline*}
	2\tau\, (\EBDF x^{n+3})\cdot (M \DBDF x^{n+3}) \\
	= 2\, \|\EBDF x^{n+3}\|_M^2 - \tfrac 13\, \|\EBDF x^{n+2}\|_M^2 + \tfrac 14\, \|\EBDF^2 x^{n+3}\|_M^2 + \tfrac 13\, \|\tau \DBDF x^{n+3}\|_M^2.
\end{multline*}
This then implies $\tau \sum_{k=1}^{n+3} \|\lambda^k-\lambda(t^k)\|_{H^{-\sfrac 1 2}(\Gamma)}^2 \lesssim \tau^3$.
\end{remark}
Next, we prove the second-order rate in weaker norms. For this, we use the well-known identity, see e.g., \cite[p.~203]{Emm04},
\begin{multline}
	\label{eqn:testing_BDF}
  4\tau\, x^{n+3}\cdot (M\DBDF x^{n+3}) 
  = \|x^{n+3}\|_M^2 - \|x^{n+2}\|_M^2\\
  + \|2x^{n+3} - x^{n+2}\|_M^2 - \|2x^{n+2} - x^{n+1}\|_M^2 + \|x^{n+3} - 2x^{n+2} + x^{n+1}\|_M^2.
\end{multline}
\begin{theorem}\label{th:BDF_convergence_order_weak_norm}	
Suppose the assumptions of Theorem~\ref{th:BDF_convergence_order_strong_norm} are satisfied. Moreover, let the consistent initial data satisfy
\begin{align*}
\| u^k - u(t^k)\|_{\Mu}^2 + \tau\, \| u^k - u(t^k)\|_{\Ku}^2 
+ \| p^k - p(t^k)\|_{\Mp}^2 + \tau\, \| p^k - p(t^k)\|_{\Kp}^2 
\lesssim \tau^{5}
\end{align*}
for $k=0,1,2$ with consistent $u^2,p^2$. Then the error estimate
\begin{equation*}
	\| \hat u^{n} - u(t^{n})\|_{\Mu}^2 + \| p^{n} - p(t^{n})\|_{\Mp}^2 + \tau \sum_{k=1}^n \Big( \| \hat u^{k} - u(t^{k})\|_{\Ku}^2 + \| p^{k} - p(t^{k})\|_{\Kp}^2 \Big)
	\lesssim \tau^4 
\end{equation*}
holds for every $n\ge0$.
\end{theorem}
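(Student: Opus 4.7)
The plan is to adapt the proof of Theorem~\ref{th:BDF_convergence_order_strong_norm} by testing the perturbed BDF error system with $[(e_{\hat u}^{n+3})^T, (e_p^{n+3})^T]^T$ rather than with its first difference, and by invoking the telescoping identity~\eqref{eqn:testing_BDF} in place of~\eqref{eqn:testing_BDF_with_difference_help}. As before, the preserved constraint $\Bu e_{\hat u}^{n+3} = \Bp e_p^{n+3}$, obtained by subtracting~\eqref{eqn:BDF_sequential_d} from~\eqref{eq:semidiscreteDAE:b}, annihilates the Lagrange multiplier contribution under this choice of test function.

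Multiplying the tested equation by $4\tau$, summing over $n=-2,\dots,N-3$, and applying~\eqref{eqn:testing_BDF} separately to each of the two diagonal mass blocks, the left-hand side telescopes to
\begin{equation*}
\|e_{\hat u}^N\|_{\Mu}^2 + \|2e_{\hat u}^N-e_{\hat u}^{N-1}\|_{\Mu}^2 + \|e_p^N\|_{\Mp}^2 + \|2e_p^N-e_p^{N-1}\|_{\Mp}^2 + \sum_{k=1}^N \bigl(\|\EBDF^2 e_{\hat u}^k\|_{\Mu}^2 + \|\EBDF^2 e_p^k\|_{\Mp}^2\bigr) + 4\tau \sum_{k=1}^N \bigl(\|e_{\hat u}^k\|_{\Ku}^2 + \|e_p^k\|_{\Kp}^2\bigr),
\end{equation*}
which already contains the norms appearing in the claim. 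The boundary contributions at $k=-1,0$ that the telescoping leaves behind are controlled by the sharpened initial-data assumption by $\lesssim \tau^5 \lesssim \tau^4$.

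Next I would estimate the right-hand side. The BDF-2 truncation errors $\DBDF u(t^k)-\dot u(t^k)$ and $\DBDF p(t^k)-\dot p(t^k)$ are controlled by Cauchy--Schwarz, Young's inequality, and the first bound of Lemma~\ref{lem:error_DandE}, producing an $O(\tau^4)$ consistency contribution plus a small multiple of $\tau\sum\|e^k\|_M^2$. The stiffness perturbation $K_{\bullet 2}\EBDF^2 u_2(t^{n+3})$ is controlled by the estimate $\langle e_{\hat u}, K_{\bullet 2} y\rangle \leq \|e_{\hat u}\|_{\Ku}\|y\|_{K_{22}}$, which follows from writing $K_{\bullet 2} y = \Ku\, (0,y)^T$ and applying Cauchy--Schwarz in the $\Ku$-inner product; together with the second bound of Lemma~\ref{lem:error_DandE} and Young's inequality this yields an $O(\tau^4)$ contribution and a companion $\epsilon\tau\sum\|e_{\hat u}^k\|_{\Ku}^2$ that is absorbed into the left-hand side for $\epsilon$ small.

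The main obstacle is the mass perturbation $\tfrac{3}{2\tau}M_{\bullet 2}\EBDF^3 u_2(t^{n+3})$: in the weaker $\Mu$-norm there is no companion $\tau^{-1}\|\EBDF e_{\hat u}\|_{\Mu}^2$ term available to absorb the $\tau^{-1}$ prefactor, unlike in the proof of Theorem~\ref{th:BDF_convergence_order_strong_norm}. Using the block identity $M_{\bullet 2}^T\Mu^{-1}M_{\bullet 2} = M_{22}$ and Young's inequality (with an appropriate scaling by $\sqrt{\tau}$ to balance the $\tau^{-1}$), I would produce a bound of the form $\epsilon\tau\sum\|e_{\hat u}^k\|_{\Mu}^2 + C\epsilon^{-1}\tau^{-1}\sum\|\EBDF^3 u_2(t^k)\|_{M_{22}}^2$, and then apply the sharper residual estimate $|\EBDF^3 r|^2 \lesssim \tau^5\int|\dddot r|^2\ds$ of Lemma~\ref{lem:error_DandE} to convert the second sum into an $O(\tau^4)$ quantity. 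Finally, the remaining $\epsilon\tau\sum\bigl(\|e_{\hat u}^k\|_{\Mu}^2+\|e_p^k\|_{\Mp}^2\bigr)$ contributions are removed by a standard discrete Gronwall argument applied to the partial sums, which delivers the claim. No additional CFL restriction beyond Assumption~\ref{ass:cfl} is introduced at this step.
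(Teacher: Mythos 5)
Your overall framework is the same as the paper's: test the error system with $4\tau\,[(e_{\hat u}^{n+3})^T,(e_p^{n+3})^T]^T$, use identity~\eqref{eqn:testing_BDF}, control consistency by Lemma~\ref{lem:error_DandE}, and close with a discrete Gronwall argument. However, there is a genuine gap in how you treat the perturbation terms. In the error equation the perturbation $\bigl(\tfrac{3}{2\tau}M_{\bullet 2}\EBDF^3 + K_{\bullet 2}\EBDF^2\bigr)$ does \emph{not} act only on the exact values $u_2(t^{n+3})$: since the discrete scheme itself is the perturbed BDF system~\eqref{eqn:BDF_perturbed}, the perturbation acts on $\hat u_2^{n+3} = u_2(t^{n+3}) - e_{\hat u_2}^{n+3}$, so the right-hand side contains the \emph{error-dependent} quantities $\tfrac{3}{2\tau}M_{\bullet 2}\EBDF^3 e_{\hat u_2}^{n+3}$ and $K_{\bullet 2}\EBDF^2 e_{\hat u_2}^{n+3}$. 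Your proposal only bounds the exact-solution parts $\EBDF^3 u_2(t^{n+3})$ and $\EBDF^2 u_2(t^{n+3})$ via the Taylor residual estimates of Lemma~\ref{lem:error_DandE}; those estimates are unavailable for the discrete error differences, which are of unknown size a priori.

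This is exactly where the paper's main technical work lies. Via Lemma~\ref{lem:cM} (using $\Mla e_{\hat u_2}^k = \Bp e_p^k$), the error-dependent mass term becomes $\sqrt{c_M h}\,\|\EBDF^3 e_p^{n+3}\|_{\Mp}$ and the stiffness term becomes $c_K\tau h^{-1}\|\EBDF^2 e_p^{n+3}\|_{\Mp}^2$. The latter is absorbed using the strong-norm estimate~\eqref{eqn:BDF_convergence_order_strong_norm_help} from Theorem~\ref{th:BDF_convergence_order_strong_norm} together with the CFL condition $\tau\lesssim h$ (so this step does in fact re-use Assumption~\ref{ass:cfl} in an essential way). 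The former leaves the sum $h\sum_{k}\tau^{-5}\|\EBDF^3 e_p^k\|_{\Mp}^2$, and one must prove separately that $\tau\sum_{k}\|\tfrac1\tau\EBDF^3 e_p^k\|_{\Mp}^2\lesssim\tau^4$; the paper devotes Appendix~\ref{app:new} to this, applying the whole stability/convergence machinery to the \emph{differenced} sequence $(\EBDF\hat u^k,\EBDF p^k,\EBDF\lambda^k)$, which satisfies the same scheme with differenced data, plus a dedicated estimate of the starting step. Without handling these two error-dependent perturbation terms, the Gronwall step cannot be closed and the argument is incomplete; your treatment of the exact-solution parts (including the identity $M_{\bullet2}^T\Mu^{-1}M_{\bullet2}=M_{22}$ and the $\Ku$-Cauchy--Schwarz bound for $K_{\bullet2}$) is fine, but it addresses only the easy half of the right-hand side.
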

\begin{proof}
In contrast to the proof of Theorem~\ref{th:BDF_convergence_order_strong_norm}, we now consider~$4\tau\, [(e_{\hat u}^{n+3})^T, (e_p^{n+3})^T]^T$ as test function in~\eqref{eqn:BDF_perturbed}. This then yields the bound
\begin{align*}
	&\quad\|e_{\hat u}^{n+3}\|^2_{\Mu} - \|e_{\hat u}^{n+2}\|^2_{\Mu} + \|\EBDF^2 e_{\hat u}^{n+3}\|^2_{\Mu} +\|2e_{\hat u}^{n+3}-e_{\hat u}^{n+2}\|^2_{\Mu} - \|2e_{\hat u}^{n+2}-e_{\hat u}^{n+1}\|^2_{\Mu}\\*
	&\qquad + \|e_p^{n+3}\|^2_{\Mp} - \|e_p^{n+2}\|^2_{\Mp} + \|\EBDF^2 e_p^{n+3}\|^2_{\Mp} +\|2e_p^{n+3}-e_p^{n+2}\|^2_{\Mp} - \|2e_p^{n+2}-e_p^{n+1}\|^2_{\Mp}\\
	&\qquad + 4\tau\, \|e_{\hat u}^{n+3}\|_{\Ku}^2  + 4\tau\, \|e_p^{n+3}\|_{\Kp}^2\\*
	&= 4\tau\, \big\langle e_{\hat u}^{n+3}, (M_{\bullet 2} \tfrac{3}{2\tau} \EBDF^3 + K_{\bullet 2} \EBDF^2) (u_2(t^{n+3})-e_{\hat u_2}^{n+3}) \big\rangle\\*
	&\qquad + 4\tau\, \big\langle \DBDF u(t^{n+3})-\dot u(t^{n+3}), \Mu e_{\hat u}^{n+3}\big\rangle + 4\tau\, \big\langle \DBDF p(t^{n+3})-\dot p(t^{n+3}), \Mp e_p^{n+3}\big\rangle \\
	&\le \|e_{\hat u}^{n+3}\|_{\Mu} \Big(6\sqrt{c_M h}\, \big(\|\EBDF^3 e_p^{n+3}\|_{\Mp} +\|\EBDF^3 p(t^{n+3})\|_{\Mp}\big) + 4\tau\, \|\DBDF u(t^{n+3})-\dot u(t^{n+3}) \|_{\Mu} \Big)\\
	&\qquad + 4\tau\, \|e_p^{n+3}\|_{\Mp} \|\DBDF p(t^{n+3})-\dot p(t^{n+3}) \|_{\Mp} + 8\tau\, \| \EBDF^2 u_2(t^{n+3})\|_{K_{22}}^2\\
	&\qquad + \tau\, \| e_{\hat u}^{n+3}\|_{\Ku}^2 +  c_K \tau h^{-1} \| \EBDF^2 e_p^{n+3}\|^2_{\Mp}
\end{align*}
Summing up from $0$ to $n$ and using \cite[Lem.~8.13]{Zim21} as well as $\sqrt a+\sqrt b \leq \sqrt 2 \sqrt{a+b}$, we derive the estimate
\begin{align*}
	&\quad \|e_{\hat u}^{n+3}\|^2_{\Mu} + \|e_p^{n+3}\|^2_{\Mp} + \tau \sum_{k=1}^{n+3} \|e_{\hat u}^{k}\|_{\Ku}^2 + \|e_p^{k}\|_{\Kp}^2\\
	&\lesssim \bigg(\bigg[\|e_{\hat u}^{2}\|^2_{\Mu} +  \|2e_{\hat u}^{2}-e_{\hat u}^{1}\|^2_{\Mu} +\|e_p^{2}\|^2_{\Mp} +  \|2e_p^{2}-e_p^{1}\|^2_{\Mp} + \tau \sum_{k=3}^{n+3} \| \EBDF^2 u_2(t^{k})\|_{K_{22}}^2 \bigg]^{\sfrac 1 2} \\*
	&\qquad + \tau \sum_{k=3}^{n+3} \Big( \|\DBDF u(t^{k})-\dot u(t^{k}) \|_{\Mu} + \|\DBDF p(t^{k})-\dot p(t^{k}) \|_{\Mp} \Big) \\
	&\qquad + \sqrt{h} \sum_{k=3}^{n+3}  \|\EBDF^3 p(t^{k})\|_{\Mp} + \|\EBDF^3 e_p^{k}\|_{\Mp} \bigg)^2\\
	&\lesssim \|e_{\hat u}^{2}\|^2_{\Mu} +  \|2e_{\hat u}^{2}-e_{\hat u}^{1}\|^2_{\Mu} +\|e_p^{2}\|^2_{\Mp} +  \|2e_p^{2}-e_p^{1}\|^2_{\Mp} + \tau^4 \int_{t^{1}}^{t^{n+3}} \|\ddot{u}_2(s)\|_{K_{22}}^2 \ds \\
	&\qquad + \tau^4\, t^{n+3} \bigg( \int_{t^{1}}^{t^{n+3}}  \|\dddot{u}(s)\|_{\Mu}^2 \ds +  (1+h) \int_{t^{0}}^{t^{n+3}}  \|\dddot{p}(s)\|_{\Mp}^2\ds + h\, \sum_{k=3}^{n+3} \tau^{-5} \|\EBDF^3 e_p^{k}\|_{\Mp}^2\bigg).
\end{align*}
Since the error of the initial data is of order~$\tau^{5}$ by assumption, it remains to show that also the last term~$\sum_{k=3}^{n+3} \tau^{-5} \|\EBDF^3 e_p^{k}\|_{\Mp}^2$ is bounded or, equivalently, $\tau \sum_{k=3}^{n+3} \|\frac 1 \tau \EBDF^3 e_p^{k}\|_{\Mp}^2 \lesssim \tau^4$. For the sake of readability, we move this part to Appendix~\ref{app:new}. 
\end{proof}
%
%
%
\subsection{Convergence of the splitting scheme~\eqref{eq:splittingScheme:fullyDiscrete}}\label{sect:convergence:original}
After dealing with the auxiliary scheme~\eqref{eqn:BDF_sequential} in the previous subsection, we now turn to the original splitting scheme~\eqref{eq:splittingScheme:fullyDiscrete}. As we have already discussed in the beginning of this section, the two schemes only differ in the approximations of $u$ (more precisely~$u_2$). For this, we observe that 
\begin{align*}
	\|u(t^n) - u^n\|^2_{\Mu} 
	&\lesssim \|e_{\hat u}^n\|^2_{\Mu} + \|\EBDF^2 \hat{u}_2^n\|^2_{M_{22}}\\
	&\lesssim \|e_{\hat u}^n\|_{\Mu}^2 + h\, \|\EBDF^2 p(t^n)\|_{\Mp}^2 + h\, \|\EBDF^2 e_{p}^n\|_{\Mp}^2\\
	&\leq \|e_{\hat u}^n\|_{\Mu}^2 + h\, \|\EBDF^2 p(t^n)\|_{\Mp}^2 + \tau h \sum_{k=1}^n \tau\, \|\tfrac 1 \tau \EBDF^2 e_{p}^k\|_{\Mp}^2.
\end{align*} 
By Theorem~\ref{th:BDF_convergence_order_weak_norm}, Lemma~\ref{lem:error_DandE}, and estimate~\eqref{eqn:BDF_convergence_order_strong_norm_help}, the right-hand side is of order $\tau^4$. Similarly, one shows that 
\begin{equation*}
	\tau \sum_{k=1}^n \|u(t^k) - u^k\|_{\Ku}^2 
	\lesssim \tau^4.
\end{equation*}
These two estimates prove the second-order convergence of the splitting scheme~\eqref{eq:splittingScheme:fullyDiscrete}. We summarize this observation in the following theorem.
\begin{theorem}[Second-order convergence]
\label{th:BDF_original}
Let Assumptions~\ref{ass:mesh} and~\ref{ass:cfl} be satisfied. Further assume that the solution $(u,p,\lambda)$ of the semi-discrete problem~\eqref{eq:semidiscreteDAE} is sufficiently regular, namely~$u\in H^3(0,T;\R^{\dofOm})$ and~$p\in H^3(0,T;\R^{\dofGa})$. Then, initial data of the form
\begin{equation*}
	\|u^k - u(t^k)\|_{\Mu}^2 + \tau\, \|u^k - u(t^k)\|_{\Ku}^2 + \|p^k - p(t^k)\|_{\Mp}^2 + \tau\, \|p^k - p(t^k)\|_{\Kp}^2 
	\lesssim \tau^5
\end{equation*}  
for $k=0,1,2$ with consistent $u^2, p^2$ implies the error estimate 
\begin{equation*}
	\| u^{n} - u(t^{n})\|_{\Mu}^2 + \| p^{n} - p(t^{n})\|_{\Mp}^2 
	+ \tau \sum_{k=1}^n \Big( \|u^{k} - u(t^{k})\|_{\Ku}^2 + \| p^{k} - p(t^{k})\|_{\Kp}^2 \Big) 
	\lesssim \tau^4
\end{equation*}
for every $n$.
\end{theorem}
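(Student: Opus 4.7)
The plan is to reduce the statement to the already established convergence of the auxiliary scheme~\eqref{eqn:BDF_sequential} via a short perturbation argument. The decisive observation is that the splitting scheme~\eqref{eq:splittingScheme:fullyDiscrete} and the auxiliary scheme produce identical iterates for $u_1^{n+3}$, $p^{n+3}$, and $\lambda^{n+3}$: in equations~\eqref{eq:splittingScheme:fullyDiscrete:a}--\eqref{eq:splittingScheme:fullyDiscrete:b} the substitutions $u_2^{n+3} = \Mla^{-1}\Bp(2p^{n+2}-p^{n+1})$ and $w^{n+3} = \Mla^{-1}\Bp\Dalt p^{n+2}$ depend only on values of $p$ at time levels $\le n+2$, so they decouple from the alternative update $\Mla \hat u_2^{n+3} = \Bp p^{n+3}$ chosen by the auxiliary scheme. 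Subtracting the two $u_2$-formulas yields the clean identity $\Mla(\hat u_2^{n+3}-u_2^{n+3}) = \Bp \EBDF^2 p^{n+3}$, meaning that $\hat u_2^{n+3}-u_2^{n+3}$ is precisely the $L^2(\Gamma)$-projection of $\EBDF^2 p^{n+3}$ onto $\Mh$.

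The next step is a triangle inequality. Since the two schemes differ only in the $u_2$-block, I write $\|u(t^n)-u^n\|^2_{\Mu} \lesssim \|e_{\hat u}^n\|^2_{\Mu} + \|\hat u_2^n-u_2^n\|^2_{M_{22}}$. Theorem~\ref{th:BDF_convergence_order_weak_norm} bounds the first summand by $\tau^4$ under the assumed initial data and regularity. The first bound of Lemma~\ref{lem:cM} converts the second into $h\,\|\EBDF^2 p^n\|^2_{\Mp}$, and decomposing $\EBDF^2 p^n = \EBDF^2 p(t^n)-\EBDF^2 e_p^n$ I would use Lemma~\ref{lem:error_DandE} to bound the consistency part pointwise by $\tau^4$, while the intermediate estimate~\eqref{eqn:BDF_convergence_order_strong_norm_help} from the strong-norm analysis of the auxiliary scheme yields $\tau\sum_k \|\tfrac 1\tau \EBDF^2 e_p^k\|^2_{\Mp} \lesssim \tau^3$, which supplies both a pointwise bound on $\|\EBDF^2 e_p^n\|^2_{\Mp}$ and the summed bound $\tau\sum_k\|\EBDF^2 e_p^k\|^2_{\Mp}\lesssim\tau^5$. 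Since $h$ is bounded, the $\Mu$-part follows as $\lesssim\tau^4$. For the $\Ku$-integrated error I will proceed analogously, now invoking the second bound of Lemma~\ref{lem:cM} which produces an $h^{-1}$ factor; combining with the two residual estimates and absorbing the $h^{-1}$ by the weak CFL condition $\tau\lesssim h$ from Assumption~\ref{ass:cfl} should again yield $\tau^4$.

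The main obstacle I anticipate is the bookkeeping in the $\Ku$-estimate: the $h^{-1}$ loss from Lemma~\ref{lem:cM} is tight against the orders obtained for $\tau\sum_k\|\EBDF^2 p(t^k)\|^2_{\Mp}$ and $\tau\sum_k\|\EBDF^2 e_p^k\|^2_{\Mp}$, so Assumption~\ref{ass:cfl} enters in an essential way and must be applied with some care. Beyond this, the argument is a direct assembly of Theorems~\ref{th:BDF_convergence_order_strong_norm} and~\ref{th:BDF_convergence_order_weak_norm} together with Lemmas~\ref{lem:cM} and~\ref{lem:error_DandE}, and no new Gronwall-type step is required for the splitting scheme itself. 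A semi-linear extension, not treated explicitly here, is expected to follow from a standard Lipschitz modification of the discrete stability argument.
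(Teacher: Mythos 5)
Your overall strategy is exactly the paper's: the iterates $u_1^n,p^n,\lambda^n$ of~\eqref{eq:splittingScheme:fullyDiscrete} and~\eqref{eqn:BDF_sequential} coincide, the difference is confined to the $u_2$-block with $\Mla(\hat u_2^{n}-u_2^{n})=\Bp\EBDF^2 p^{n}$, and your treatment of the $L^\infty(L^2)$-part (first bound of Lemma~\ref{lem:cM}, splitting $\EBDF^2 p^n=\EBDF^2 p(t^n)-\EBDF^2 e_p^n$, Lemma~\ref{lem:error_DandE}, Theorem~\ref{th:BDF_convergence_order_weak_norm} and the intermediate estimate~\eqref{eqn:BDF_convergence_order_strong_norm_help}) reproduces the published computation verbatim.

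However, the $\Ku$-part as you sketch it does not close, and the obstacle you flag is fatal in the form you propose to handle it. Applying the second bound of Lemma~\ref{lem:cM} to the \emph{whole} increment $\EBDF^2 p^k$ gives $\tau\sum_k\|u_2^k-\hat u_2^k\|_{K_{22}}^2\lesssim h^{-1}\tau\sum_k\|\EBDF^2 p(t^k)\|_{\Mp}^2+h^{-1}\tau\sum_k\|\EBDF^2 e_p^k\|_{\Mp}^2$; the second term is fine ($h^{-1}\tau^2\cdot\tau^3\lesssim\tau^4$ under Assumption~\ref{ass:cfl}), but the consistency term is only $h^{-1}\tau\cdot\tau^3\lesssim\tau^3$ after invoking $\tau\lesssim h$, i.e.\ one order of $\tau$ short of the claimed $\tau^4$ (order $1.5$ instead of $2$ in $L^2(H^1)$). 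The repair — which is what the paper's terse ``similarly'' must mean, and which is consistent with the appearance of the $\|\ddot u_2\|_{K_{22}}$-terms in~\eqref{eqn:BDF_convergence_order_strong_norm_help} — is to use the inverse-estimate bound of Lemma~\ref{lem:cM} only for the genuinely discrete part $\EBDF^2 e_p^k$, and to treat the exact part without any $h^{-1}$: since the semi-discrete solution satisfies the constraint $\Mla u_2(t)=\Bp p(t)$ for all $t$, one has $\Mla^{-1}\Bp\,\EBDF^2 p(t^k)=\EBDF^2 u_2(t^k)$, whence by Lemma~\ref{lem:error_DandE} and the assumed regularity $\tau\sum_k\|\EBDF^2 u_2(t^k)\|_{K_{22}}^2\lesssim\tau^4\int\|\ddot u_2\|_{K_{22}}^2\ds\lesssim\tau^4$. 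With this decomposition, $\tau\sum_k\|u(t^k)-u^k\|_{\Ku}^2\lesssim\tau\sum_k\|e_{\hat u}^k\|_{\Ku}^2+\tau^4+c_Kh^{-1}\tau^2\big(\tau\sum_k\|\tfrac1\tau\EBDF^2 e_p^k\|_{\Mp}^2\big)\lesssim\tau^4$, and the CFL condition enters only through this last term (and through the positivity of the prefactor in~\eqref{eqn:BDF_convergence_order_strong_norm_help}), exactly as in the paper.
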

\begin{remark}
Although we have used~$\tau \lesssim h$ in the convergence proof, e.g., to guarantee the stability of the scheme in Prop.~\ref{prop:BDF_stability}, such a condition never occurred numerically for the proposed splitting scheme~\eqref{eq:splittingScheme:fullyDiscrete}; see Section~\ref{sect:numerics}. For the auxiliary scheme~\eqref{eqn:BDF_sequential}, however, which is the centerpiece of the convergence analysis, this condition is indeed necessary; see Figure~\ref{fig:exp_BDF_sequentiell} below. 
\end{remark}
\begin{remark}\label{rem:nonlinear}
Theorems~\ref{th:BDF_convergence_order_weak_norm} and~\ref{th:BDF_original} can be extended to systems with locally Lipschitz continuous right-hand sides~$\fu$ and~$\fp$. This means that for every~$(t,u)$ there exist constants $L_{1}, L_{2}, L_{3} \geq 0$ and a radius $r_{\fu}>0$ such that 
\begin{equation*}
		\|\fu(\check t, \check u) - \fu(\tilde t, \tilde u)\|_{\Mu^{-1}}^2 \leq L_{1}\, |\check{t} - \tilde{t}|^2 + 
		L_{2}\, \|\check{u} - \tilde{u}\|_{\Mu}^2 + 
		L_{3}\, \|\check{u} - \tilde{u}\|_{\Ku}^2 
\end{equation*}
for all $(\check t,\check u),(\tilde t,\tilde u)$ in a ball of radius $r_{\fu}$ around $(t,u)$ with norm $(|\cdot|^2+\|\cdot\|^2_{\Mu+\Ku})^{\sfrac 12}$. An analogous condition should be satisfied for the right-hand side~$\fp$, where~$\fp$ is allowed to dependent not only on $t$ and $p$ but also on $u$. The associated convergence proof is given by a combination of the steps of Theorems~\ref{th:BDF_convergence_order_weak_norm}, \ref{th:BDF_original} and \cite[App.~A]{AltKZ22}.
\end{remark}
%
%
\section{Numerical Experiments}\label{sect:numerics}
This section is devoted to the numerical study of the proposed bulk--surface splitting scheme. The computations are performed on an Intel${}^\text{\textregistered}$ Core\texttrademark{} i7-8565U CPU@1.80\,GHz$\times$4 with 48\,GB memory using MATLAB (version R2021b). 

All tests are carried out on the unit disc, i.e., $\Omega = \{x\in \R^2\,|\, x^2_1 + x^2_2 \leq 1\}$, and on the time interval~$[0,1]$, i.e., $T=1$. The spatial meshes are generated by \textsc{DistMesh}; cf.~\cite{PerS04}. The number of degrees of freedom for the bulk problem are given by~$\dofOm = 159, 320, 640, 1290, 2590, 5161$ and correspond to mesh widths~$h_k \approx h_{k-1}/\sqrt{2}$. Moreover, we consider the special case~$\calT_\Omega|_{\Gamma} = \calT_{\Gamma}$. Based on these meshes, the discrete spaces~$\Vh$, $\Qh$, and $\Mh$ equal the appropriate spaces of globally continuous and piecewise linear functions. In particular, $B_p$ equals the mass matrix~$M_\lambda$ of $\calP_1(\calT_\Gamma)$ and $B_u=[0\ M_\lambda]\, P$ holds for some permutation matrix~$P$ such that the last $\dofLam$ entries of its image are associated to the boundary nodes of~$\calT_\Omega$. 
As error measure, we consider the difference between the exact solution (without any discretization) and the fully discrete solution in the discrete versions of the norms of the spaces  
\begin{equation*}
	L^\infty(L^2) \coloneqq L^\infty(0,1;L^2(\Omega) \times L^2(\Gamma)),  
	\qquad
	L^2(H^1) \coloneqq L^2(0,1;H^1(\Omega) \times H^1(\Gamma)).
\end{equation*}

In the first experiment, we consider a linear heat equation with an additional heat equation on the boundary. For the second experiment, we add the nonlinearity~$p^3-p$ to the right-hand side of the boundary equation. Finally, we give an outlook how to construct splitting schemes of higher order. 
%
%
\subsection{Linear heat equation}\label{sect:numerics:linear}
In this first part, we consider the linear model problem introduced in~\cite[Sec.~5.1.2]{AltKZ22}, i.e., the heat equation with dynamic boundary conditions 
	\begin{align*}
		\dot u - \Delta u 
		&= \fu \qquad\text{in } \Omega,\\*
		\dot u - \Delta_\Gamma u + \partial_{\nu} u 
		&= \fp \qquad\text{on } \Gamma.
	\end{align*}	
The right-hand sides~$\fu$ and~$\fp$ are chosen such that $u(t,x,y) = \exp(-t)xy$ equals the exact solution. 

Figure~\ref{fig:exp_lin_BDF_b} illustrates the errors of the proposed splitting scheme~\eqref{eq:splittingScheme:fullyDiscrete} for different spatial mesh widths~$h$ and temporal step sizes~$\tau$. For the initial data at the time points $t=0,\tau,2\tau$, we use the exact solution. 

\begin{figure}
%
%
\begin{tikzpicture}

\begin{axis}[%
width=2.3in,
height=2.0in,
at={(-2.7in,0.in)},
scale only axis,
xmode=log,
xmin=0.000651041666666667,
xmax=0.24,
xminorticks=true,
xlabel style={font=\color{white!15!black}},
xlabel={step size ($\tau$)},
ymode=log,
ymin=1e-04,
ymax=0.07,
yminorticks=true,
ylabel style={font=\color{white!15!black}},
axis background/.style={fill=white},
title style={font=\bfseries},
ylabel={$\|(u,p)-(u^n,p^n)\|_{L^\infty(L^2)}$},
legend columns = 3,
legend style={legend cell align=left, align=left, at={(1.05,1.05)}, anchor=south, draw=white!15!black}
]
\addplot [color=mycolor0, line width=1.0pt, mark=square, mark options={solid, mycolor0}]
  table[row sep=crcr]{%
0.2	0.0133665549296187\\
0.1	0.00622577359408096\\
0.05	0.00650226327754989\\
0.025	0.00671731848441165\\
0.0125	0.00679659412984711\\
0.00625	0.00682706202182971\\
0.003125	0.00683996838404564\\
0.0015625	0.00684586250549157\\
0.00078125	0.00684867710522834\\
};
\addlegendentry{$h=0.20741$\qquad}

\addplot [color=mycolor1, line width=1.0pt, mark=o, mark options={solid, mycolor1}]
  table[row sep=crcr]{%
0.2	0.0140214370253407\\
0.1	0.0034304576940085\\
0.05	0.00286772139739218\\
0.025	0.00300147305725701\\
0.0125	0.0030556647277605\\
0.00625	0.00307453533266609\\
0.003125	0.00308167210299944\\
0.0015625	0.00308465575059922\\
0.00078125	0.00308600488160325\\
};
\addlegendentry{$h=0.14394$\qquad}

\addplot [color=mycolor2, line width=1.0pt, mark=x, mark options={solid, mycolor2}]
  table[row sep=crcr]{%
0.2	0.0145152753035924\\
0.1	0.00307361279839245\\
0.05	0.00153470304857945\\
0.025	0.00158211823909733\\
0.0125	0.00162212857366344\\
0.00625	0.00163585357951924\\
0.003125	0.00164062275459772\\
0.0015625	0.00164245650119919\\
0.00078125	0.00164323621801424\\
};
\addlegendentry{$h=0.093568$}

\addplot [color=mycolor3, line width=1.0pt, mark size=1.3pt, mark=*, mark options={solid, mycolor3}]
  table[row sep=crcr]{%
0.2	0.0148328484832132\\
0.1	0.00317829140614537\\
0.05	0.000859726596852777\\
0.025	0.000733983891590796\\
0.0125	0.000761647569707706\\
0.00625	0.000772430213254033\\
0.003125	0.000775875940193498\\
0.0015625	0.000777048069286817\\
0.00078125	0.000777492890717739\\
};
\addlegendentry{$h=0.067169$\qquad}

\addplot [color=mycolor4, line width=1.0pt, mark=diamond, mark options={solid, mycolor4}]
  table[row sep=crcr]{%
0.2	0.015025180844746\\
0.1	0.00325225510423836\\
0.05	0.000758671533377537\\
0.025	0.000356502734899507\\
0.0125	0.000365738091923696\\
0.00625	0.000374511131151051\\
0.003125	0.00037731325452384\\
0.0015625	0.000378178820728215\\
0.00078125	0.000378470138418686\\
};
\addlegendentry{$h=0.045276$\qquad}

\addplot [color=mycolor5, line width=1.0pt, mark=+, mark options={solid, mycolor5}]
  table[row sep=crcr]{%
0.2	0.0151426023174014\\
0.1	0.00329729700293321\\
0.05	0.000756359966125836\\
0.025	0.000210683175256654\\
0.0125	0.000180205236861992\\
0.00625	0.000186601379004341\\
0.003125	0.000189007779074368\\
0.0015625	0.000189721065900332\\
0.00078125	0.000189938756710747\\
};
\addlegendentry{$h=0.032228$}

\addplot [color=black, dashed, line width=1.0pt]
  table[row sep=crcr]{%
0.2	0.006\\
0.0002	0.6e-08\\
};

\end{axis}

\begin{axis}[%
width=2.3in,
height=2.0in,
at={(0.3in,0.in)},
scale only axis,
xmode=log,
xmin=0.000651041666666667,
xmax=0.24,
xminorticks=true,
xlabel style={font=\color{white!15!black}},
xlabel={step size ($\tau$)},
ymode=log,
ymin=1e-04,
ymax=0.07,
yminorticks=true,
axis background/.style={fill=white},
title style={font=\bfseries},
ylabel={$\|(u,p)-(u^n,p^n)\|_{L^2(H^1)}$},
legend style={at={(0.97,0.03)}, anchor=south east, legend cell align=left, align=left, draw=white!15!black}
]
\addplot [color=mycolor0, line width=1.0pt, mark=square, mark options={solid, mycolor0}]
  table[row sep=crcr]{%
0.2	0.0367098587877462\\
0.1	0.016551720356205\\
0.05	0.0145223749000952\\
0.025	0.0144394362418338\\
0.0125	0.0144863399175039\\
0.00625	0.0145222763457254\\
0.003125	0.0145427936232032\\
0.0015625	0.0145536530836375\\
0.00078125	0.0145592254888811\\
};

\addplot [color=mycolor1, line width=1.0pt, mark=o, mark options={solid, mycolor1}]
  table[row sep=crcr]{%
0.2	0.0345556973587271\\
0.1	0.0105285238790149\\
0.05	0.00696892875003416\\
0.025	0.006673033666029\\
0.0125	0.00666632821103395\\
0.00625	0.00667804079858713\\
0.003125	0.00668640665516593\\
0.0015625	0.00669114782955114\\
0.00078125	0.00669365406969546\\
};

\addplot [color=mycolor2, line width=1.0pt, mark=x, mark options={solid, mycolor2}]
  table[row sep=crcr]{%
0.2	0.0341850149086814\\
0.1	0.00881044360674911\\
0.05	0.00406597044855837\\
0.025	0.0035396284027866\\
0.0125	0.00350039152813342\\
0.00625	0.00350160344062834\\
0.003125	0.00350500218922464\\
0.0015625	0.00350725403254061\\
0.00078125	0.00350851016785227\\
};

\addplot [color=mycolor3, line width=1.0pt, mark size=1.3pt, mark=*, mark options={solid, mycolor3}]
  table[row sep=crcr]{%
0.2	0.0341270145708648\\
0.1	0.00819403011547946\\
0.05	0.00263893091456034\\
0.025	0.0017827479474508\\
0.0125	0.00170420340662119\\
0.00625	0.00169837919012964\\
0.003125	0.00169896894570598\\
0.0015625	0.00169982943369996\\
0.00078125	0.00170038321605921\\
};

\addplot [color=mycolor4, line width=1.0pt, mark=diamond, mark options={solid, mycolor4}]
  table[row sep=crcr]{%
0.2	0.0341575971734974\\
0.1	0.00802316757090761\\
0.05	0.00215088695216242\\
0.025	0.000988802006954868\\
0.0125	0.000853205787782483\\
0.00625	0.000841219373235103\\
0.003125	0.000840319033308392\\
0.0015625	0.00084051421056242\\
0.00078125	0.000840734317930687\\
};

\addplot [color=mycolor5, line width=1.0pt, mark=+, mark options={solid, mycolor5}]
  table[row sep=crcr]{%
0.2	0.0341959548991209\\
0.1	0.00797299413147135\\
0.05	0.00200531309431027\\
0.025	0.000667149501226155\\
0.0125	0.000460252939278551\\
0.00625	0.00044067062843342\\
0.003125	0.000438810076054458\\
0.0015625	0.00043869075717662\\
0.00078125	0.000438758142632782\\
};

\addplot [color=black, dashed, line width=1.0pt]
  table[row sep=crcr]{%
0.2	0.015\\
0.0002	0.15e-07\\
};

\end{axis}
\end{tikzpicture}%
	\caption{Convergence history for the bulk--surface splitting scheme~\eqref{eq:splittingScheme:fullyDiscrete} for different mesh widths. The dashed line indicates order~$2$. }
	\label{fig:exp_lin_BDF_b}
\end{figure}
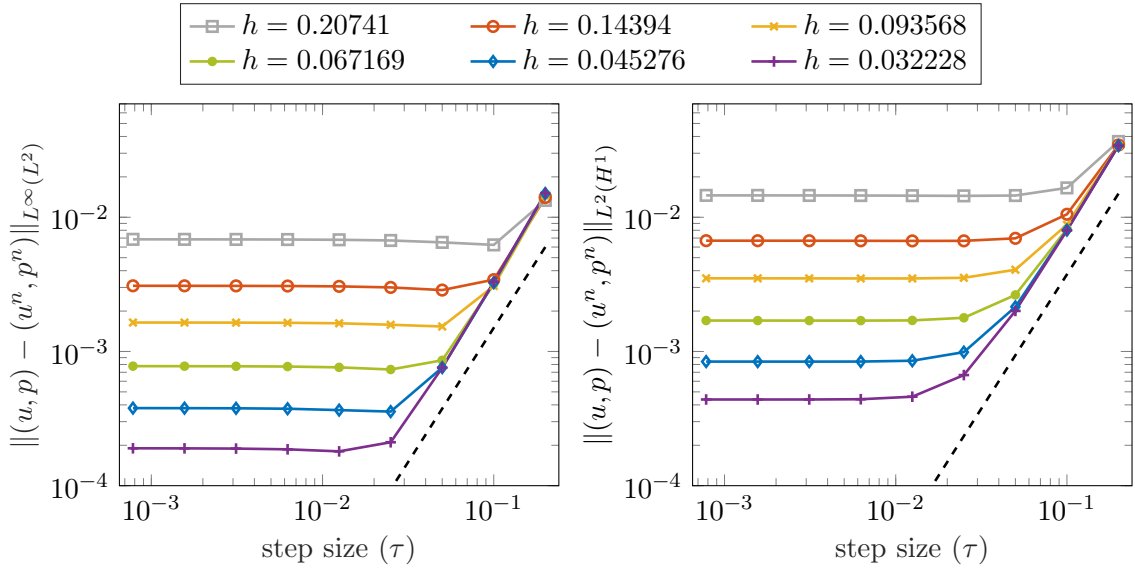
As predicted in Theorem~\ref{th:BDF_original}, we detect second-order convergence for both error measures, i.e., in the~$L^\infty(L^2)$ as well as in the~$L^2(H^1)$-norm. The same holds for the auxiliary method~\eqref{eqn:BDF_sequential}, which was used in the convergence proof; cf.~Theorem~\ref{th:BDF_convergence_order_weak_norm}. The corresponding convergence history is illustrated in Figure~\ref{fig:exp_BDF_sequentiell}. 
\begin{figure}
%
%

\begin{tikzpicture}

\begin{axis}[%
width=2.3in,
height=2.0in,
at={(-2.7in,0.in)},
scale only axis,
xmode=log,
xmin=0.000651041666666667,
xmax=0.24,
xminorticks=true,
xlabel style={font=\color{white!15!black}},
xlabel={step size ($\tau$)},
ymode=log,
ymin=1e-04,
ymax=0.3,
yminorticks=true,
ylabel style={font=\color{white!15!black}},
ylabel={$\|(u,p)-(\hat u^n,p^n)\|_{L^\infty(L^2)}$},
axis background/.style={fill=white},
title style={font=\bfseries},
legend columns = 3,
legend style={legend cell align=left, align=left, at={(1.05,1.05)}, anchor=south, draw=white!15!black}
]
\addplot [color=mycolor0, line width=1.0pt, mark=square]
  table[row sep=crcr]{%
0.2	0.0091214604384479\\
0.1	0.0056170337054697\\
0.05	0.00640050046778132\\
0.025	0.00669369417661752\\
0.0125	0.00679087898553239\\
0.00625	0.00682565446560492\\
0.003125	0.00683961855800284\\
0.0015625	0.00684577512106188\\
0.00078125	0.0068486552891685\\
};
\addlegendentry{$h=0.20741$\qquad}

\addplot [color=mycolor1, line width=1.0pt, mark=o]
  table[row sep=crcr]{%
  	0.2	0.0110958775318057\\
  	0.1	0.00283404444958513\\
  	0.05	0.00278420657417194\\
  	0.025	0.00298465162689718\\
  	0.0125	0.00305161602836487\\
  	0.00625	0.00307354396490142\\
  	0.003125	0.00308142600883779\\
  	0.0015625	0.00308459437717352\\
  	0.00078125	0.00308598952550605\\
};
\addlegendentry{$h=0.14394$\qquad}

\addplot [color=mycolor2, line width=1.0pt, mark=x]
  table[row sep=crcr]{%
0.2	0.0125570098392175\\
0.1	0.0025663222752571\\
0.05	0.00146481557228301\\
0.025	0.00156949501987209\\
0.0125	0.0016192641467432\\
0.00625	0.0016351592477553\\
0.003125	0.00164045080675614\\
0.0015625	0.00164241364956785\\
0.00078125	0.00164322550970279\\
};
\addlegendentry{$h=0.093568$}

\addplot [color=mycolor3, line width=1.0pt, mark size=1.3pt, mark=*]
  table[row sep=crcr]{%
0.2	0.0134520133612516\\
0.1	0.0027857028119642\\
0.05	0.000789975340859919\\
0.025	0.000723766862728458\\
0.0125	0.000759543526969006\\
0.00625	0.000771930938286659\\
0.003125	0.00077575309438802\\
0.0015625	0.000777017505190518\\
0.00078125	0.000777485263617418\\
};
\addlegendentry{$h=0.067169$\qquad}

\addplot [color=mycolor4, line width=1.0pt, mark=diamond]
  table[row sep=crcr]{%
  	0.2	0.0140543441769299\\
  	0.1	0.00297860358860053\\
  	0.05	0.000709217452478693\\
  	0.025	0.00034758752909319\\
  	0.0125	0.000364143492675777\\
  	0.00625	0.000374151696763278\\
  	0.003125	0.000377225950289985\\
  	0.0015625	0.000378157168642964\\
  	0.00078125	0.000378464736016595\\
};
\addlegendentry{$h=0.045276$\qquad}

\addplot [color=mycolor5, line width=1.0pt, mark=+]
  table[row sep=crcr]{%
  	0.2	0.0144570247558384\\
  	0.1	0.00310489543316615\\
  	0.05	0.000725003574098452\\
  	0.025	0.000202272008785534\\
  	0.0125	0.000178946628944052\\
  	0.00625	0.000186340929924153\\
  	0.003125	0.000188945821182341\\
  	0.0015625	0.000189705773345646\\
  	0.00078125	0.00018993494288697\\
};
\addlegendentry{$h=0.032228$}

\addplot [color=black, dashed, line width=1.0pt]
  table[row sep=crcr]{%
0.2	0.004\\
0.0002	0.4e-08\\
};

\end{axis}

\begin{axis}[%
width=2.3in,
height=2.0in,
at={(0.3in,0.in)},
scale only axis,
xmode=log,
xmin=0.000651041666666667,
xmax=0.24,
xminorticks=true,
xlabel style={font=\color{white!15!black}},
xlabel={step size ($\tau$)},
ymode=log,
ymin=1e-04,
ymax=0.3,
yminorticks=true,
axis background/.style={fill=white},
title style={font=\bfseries},
ylabel={$\|(u,p)-(\hat u^n,p^n)\|_{L^2(H^1)}$},
legend style={at={(0.97,0.03)}, anchor=south east, legend cell align=left, align=left, draw=white!15!black}
]
\addplot [color=mycolor0, line width=1.0pt, mark=square, mark options={solid, mycolor0}]
  table[row sep=crcr]{%
  	0.2	0.0577485955036576\\
  	0.1	0.0171979957387987\\
  	0.05	0.0137500982913773\\
  	0.025	0.0141868096106853\\
  	0.0125	0.0144193639708094\\
  	0.00625	0.0145052406157724\\
  	0.003125	0.0145385083327537\\
  	0.0015625	0.0145525788880514\\
  	0.00078125	0.0145589565578211\\
};

\addplot [color=mycolor1, line width=1.0pt, mark=o, mark options={solid, mycolor1}]
  table[row sep=crcr]{%
  	0.2	0.0737881775323538\\
  	0.1	0.0171937644557516\\
  	0.05	0.00685108187756766\\
  	0.025	0.00642171755647673\\
  	0.0125	0.00659040356951464\\
  	0.00625	0.00665823318262033\\
  	0.003125	0.00668139312494036\\
  	0.0015625	0.00668988819152618\\
  	0.00078125	0.00669333822204482\\
};

\addplot [color=mycolor2, line width=1.0pt, mark=x, mark options={solid, mycolor2}]
  table[row sep=crcr]{%
  	0.2	0.0952369819329928\\
  	0.1	0.022178568538052\\
  	0.05	0.00607045569737348\\
  	0.025	0.00356314484261356\\
  	0.0125	0.00346307530969816\\
  	0.00625	0.00348960852583266\\
  	0.003125	0.00350183895701741\\
  	0.0015625	0.00350645304777776\\
  	0.00078125	0.00350830929986398\\
};

\addplot [color=mycolor3, line width=1.0pt, mark size=1.3pt, mark=*, mark options={solid, mycolor3}]
  table[row sep=crcr]{%
  	0.2	0.116603072179002\\
  	0.1	0.0270310790515945\\
  	0.05	0.00662736720166453\\
  	0.025	0.00220660866970798\\
  	0.0125	0.00169337643191617\\
  	0.00625	0.00168725414599897\\
  	0.003125	0.00169567014762945\\
  	0.0015625	0.00169897258896656\\
  	0.00078125	0.00170016696340775\\
};

\addplot [color=mycolor4, line width=1.0pt, mark=diamond, mark options={solid, mycolor4}]
  table[row sep=crcr]{%
  	0.2	0.14239884281975\\
  	0.1	0.0329684113232118\\
  	0.05	0.00797559668318482\\
  	0.025	0.00206297643811864\\
  	0.0125	0.000922894892400339\\
  	0.00625	0.00083406139014911\\
  	0.003125	0.000836938804450571\\
  	0.0015625	0.000839571067504436\\
  	0.00078125	0.000840492402740039\\
};

\addplot [color=mycolor5, line width=1.0pt, mark=+, mark options={solid, mycolor5}]
  table[row sep=crcr]{%
  	0.2	0.172313382496728\\
  	0.1	0.0398486372959009\\
  	0.05	0.00963957749890398\\
  	0.025	0.00238147869669197\\
  	0.0125	0.000699089691896042\\
  	0.00625	0.00044717321888756\\
  	0.003125	0.000435955079599302\\
  	0.0015625	0.000437697887015172\\
  	0.00078125	0.000438492627245151\\
};

\addplot [color=black, dashed, line width=1.0pt]
  table[row sep=crcr]{%
0.2	0.03\\
0.0002	0.3e-07\\
};

\end{axis}
\end{tikzpicture}%
	\caption{Convergence history for the auxiliary method~\eqref{eqn:BDF_sequential} showing up-moving error curves for refinements of the spatial mesh. The dashed line indicates order~$2$.}
	\label{fig:exp_BDF_sequentiell}
\end{figure}
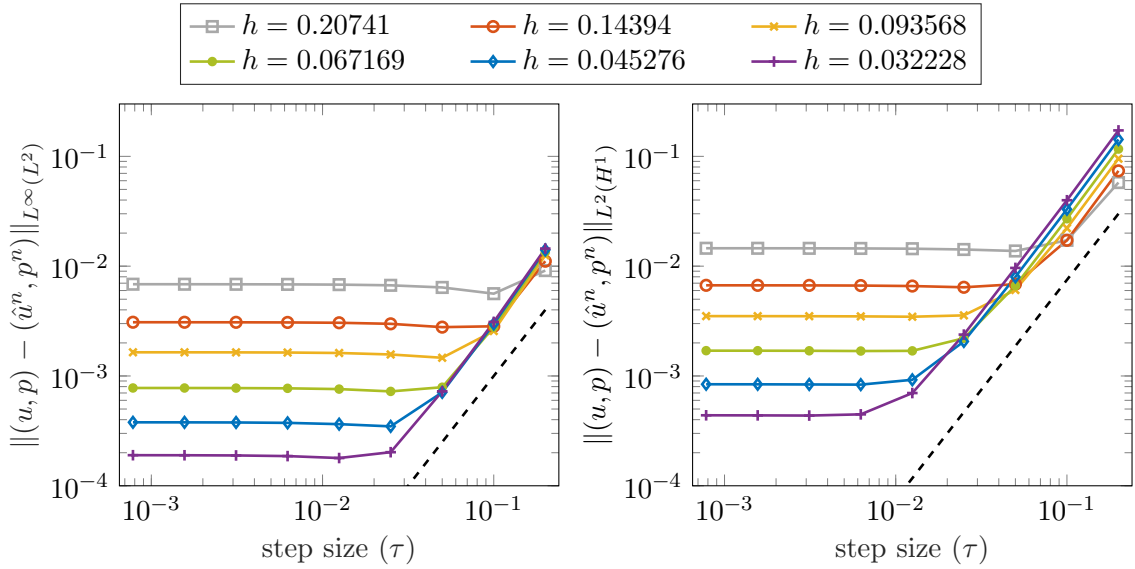
Surprisingly, the error curves of the auxiliary method slightly move upwards with every spatial refinement, which means that convergence in $\tau$ cannot hold independent of the spatial parameter~$h$. In other words, the convergence of method~\eqref{eqn:BDF_sequential} is $h$-dependent and indeed requires a condition of the form~$\tau \lesssim h$. The original scheme, on the other hand, does not show this effect. Hence, the bulk--surface splitting scheme~\eqref{eq:splittingScheme:fullyDiscrete} is independent of $h$. This indicates that the requirement of the weak CFL condition~$\tau \lesssim h$ in Theorem~\ref{th:BDF_original} is a byproduct of our proof technique rather than a necessary requirement. 

In view of an optimal scaling of the second-order splitting scheme in combination with piecewise linear finite elements, a simple calculation shows that the $L^\infty(L^2)$-error is bounded by~$c_1 h^2 + c_2 \tau^2$. Hence, the optimal error rate is obtained for $\tau \propto h$. With the same argumentation, one shows that the relation~$\tau \propto \sqrt{h}$ is optimal for the $L^2(H^1)$-error, since it is bounded by $c_1 h + c_2 \tau^2$. The corresponding numerical experiment is shown in Figure~\ref{fig:optimalScaling}. Within the plot, we consider different mesh sizes $h$ and set the time step size such that~$\tau \propto h$ and~$\tau \propto \sqrt{h}$, respectively. 	
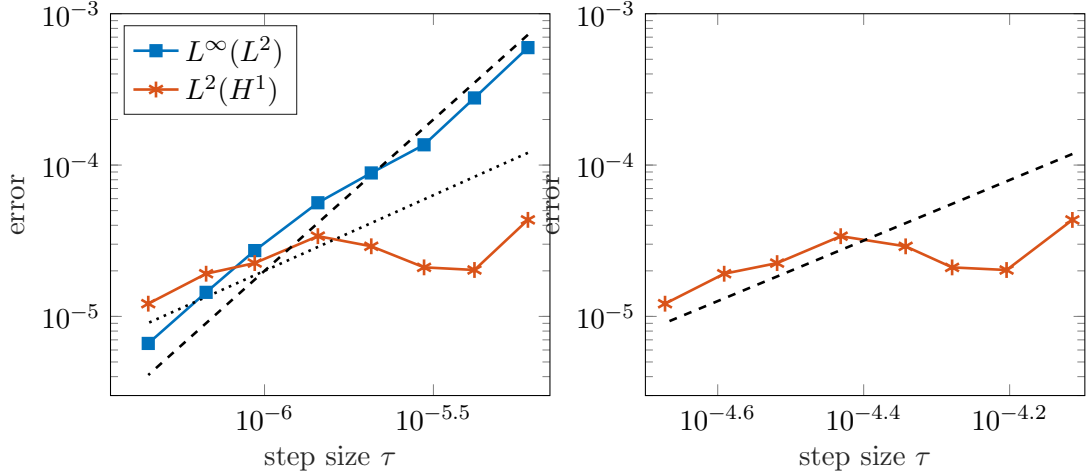
\begin{figure}
	\centering
%
%
\definecolor{mycolor1}{rgb}{0.00000,0.44700,0.74100}%
\definecolor{mycolor2}{rgb}{0.85000,0.32500,0.09800}%
\begin{tikzpicture}

\begin{axis}[%
width=2.3in,
height=2.0in,
at={(-2.7in,0.in)},
scale only axis,
xmode=log,
xmin=3.5e-07,
xmax=7e-06,
xminorticks=true,
xlabel style={font=\color{white!15!black}},
xlabel={step size $\tau$},
ymode=log,
ymin=3e-06,
ymax=0.001,
yminorticks=true,
ylabel style={font=\color{white!15!black}},
ylabel={error},
axis background/.style={fill=white},
legend style={legend cell align=left, align=left, draw=white!15!black, at={(0.03,0.97)}, anchor=north west}
]
\addplot [color=mycolor1, mark=square*, mark options={solid, mycolor1}, line width=1.0pt]
  table[row sep=crcr]{%
6.02409638554217e-06	0.000597043199108543\\
4.18410041841004e-06	0.000277711071883815\\
2.9673590504451e-06	0.000136136050672736\\
2.0703933747412e-06	8.87023315101404e-05\\
1.43884892086331e-06	5.63550823606018e-05\\
9.35453695042095e-07	2.72220316705427e-05\\
6.71591672263264e-07	1.44239739031202e-05\\
4.52693526482571e-07	6.62920328459876e-06\\
};
\addlegendentry{$L^\infty(L^2)$}

\addplot [color=mycolor2, mark=asterisk, mark options={solid, mycolor2}, mark size =3, line width=1.0pt]
  table[row sep=crcr]{%
6.02409638554217e-06	4.34405504497165e-05\\
4.18410041841004e-06	2.02697278672015e-05\\
2.9673590504451e-06	2.1093189924454e-05\\
2.0703933747412e-06	2.90652576194392e-05\\
1.43884892086331e-06	3.39182015832691e-05\\
9.35453695042095e-07	2.25134489366924e-05\\
6.71591672263264e-07	1.91509426558745e-05\\
4.52693526482571e-07	1.21368922635648e-05\\
};
\addlegendentry{$L^2(H^1)$}

\addplot [color=black, dashed, forget plot, line width=1.0pt]
  table[row sep=crcr]{%
6.02409638554217e-06	0.000725794745246044\\
4.18410041841004e-06	0.000350133926226782\\
2.9673590504451e-06	0.000176104394685169\\
2.0703933747412e-06	8.57305745234452e-05\\
1.43884892086331e-06	4.14057243413902e-05\\
9.35453695042095e-07	1.75014723113582e-05\\
6.71591672263264e-07	9.02070748506734e-06\\
4.52693526482571e-07	4.09862857838453e-06\\
};
\addplot [color=black, dotted, forget plot, line width=1.0pt]
  table[row sep=crcr]{%
6.02409638554217e-06	0.000120481927710843\\
4.18410041841004e-06	8.36820083682008e-05\\
2.9673590504451e-06	5.93471810089021e-05\\
2.0703933747412e-06	4.1407867494824e-05\\
1.43884892086331e-06	2.87769784172662e-05\\
9.35453695042095e-07	1.87090739008419e-05\\
6.71591672263264e-07	1.34318334452653e-05\\
4.52693526482571e-07	9.05387052965143e-06\\
};
\end{axis}

\begin{axis}[%
width=2.3in,
height=2.0in,
at={(0.1in,0.in)},
scale only axis,
xmode=log,
xmin=2e-05,
xmax=8e-05,
xminorticks=true,
xlabel style={font=\color{white!15!black}},
xlabel={step size $\tau$},
ymode=log,
ymin=3e-06,
ymax=0.001,
yminorticks=true,
ylabel style={font=\color{white!15!black}},
ylabel={error},
axis background/.style={fill=white},
legend style={legend cell align=left, align=left, draw=white!15!black}
]

\addplot [color=mycolor2, mark=asterisk, mark options={solid, mycolor2}, mark size =3, line width=1.0pt, forget plot]
  table[row sep=crcr]{%
7.69230769230769e-05	4.33995406190124e-05\\
6.25e-05	2.02579515149432e-05\\
5.26315789473684e-05	2.10792837150055e-05\\
4.54545454545455e-05	2.90417807861628e-05\\
3.7037037037037e-05	3.38930136536186e-05\\
3.03030303030303e-05	2.25016922662458e-05\\
2.56410256410256e-05	1.91445825545715e-05\\
2.12765957446809e-05	1.21337161559962e-05\\
};

\addplot [color=black, dashed, forget plot, line width=1.0pt]
  table[row sep=crcr]{%
7.69230769230769e-05	0.000118343195266272\\
6.25e-05	7.8125e-05\\
5.26315789473684e-05	5.54016620498615e-05\\
4.54545454545455e-05	4.13223140495868e-05\\
3.7037037037037e-05	2.74348422496571e-05\\
3.03030303030303e-05	1.83654729109275e-05\\
2.56410256410256e-05	1.31492439184747e-05\\
2.12765957446809e-05	9.05387052965143e-06\\
};
\end{axis}

\end{tikzpicture}%
	\caption{Convergence test for the bulk--surface splitting scheme~\eqref{eq:splittingScheme:fullyDiscrete} under the conditions $\tau \propto h$ (left) and $\tau \propto \sqrt{h}$~(right). The reference lines indicate order~$1$ (dotted) and order~$2$ (dashed).}
	\label{fig:optimalScaling}
\end{figure}

We have also implemented the alternative splitting schemes, which result from the different approximations of~$w$ mentioned in Section~\ref{sect:splitting:delayTerms}. This means that~$w$ is approximated by~\eqref{eq:sub1:bulk:bc:delayA} and~\eqref{eq:sub1:bulk:bc:delayC}, respectively. As before, both approaches are combined with the BDF-$2$ method for the temporal discretization. 
The errors for both schemes are non-distinguishable with the naked eye from the one in Figure~\ref{fig:exp_lin_BDF_b}. Hence, we omit the corresponding plots here. 
In summary, all three methods mentioned in Section~\ref{sect:splitting:secondOrderScheme} show convergence of order two, independent of~$h$. This is especially remarkable for the scheme based on~\eqref{eq:sub1:bulk:bc:delayA}, since the approximation of $w$ is only of first order. 
\begin{remark}\label{rem:midpoint}
Alternatively to the BDF discretization, one may also apply the midpoint rule. However, since equation~\eqref{eq:sub1:bulk:a} is approximated at time~$t^n+\frac \tau 2$, also~$u_2$ and~$w$ have to be approximated at this intermediate time point. Therefore, we consider $\Mla u_2(\,\cdot\, - \tfrac \tau 2) \approx \Bp (\tfrac 3 2 p_\tau - \tfrac 1 2 p_{2\tau})$. As before, we have some freedom how to treat the constraint~\eqref{eq:sub1:bulk:c}. We made numerical tests with the choices~$\Mla w(\,\cdot\, - \tfrac \tau 2) = \tfrac{1}{\tau} \Bp (p_\tau - p_{2\tau})$ as well as~$\Mla w(\,\cdot\, - \tfrac \tau 2) = \tfrac{1}{\tau} \Bp (2p_\tau - 3p_{2\tau} + p_{3\tau})$. Both schemes show second-order convergence but with a strong $h$-dependency. This means that the error curves move upwards for refinements of the spatial mesh; see Figure~\ref{fig:exp_lin_Trapez_a} for the first choice. 
\end{remark}
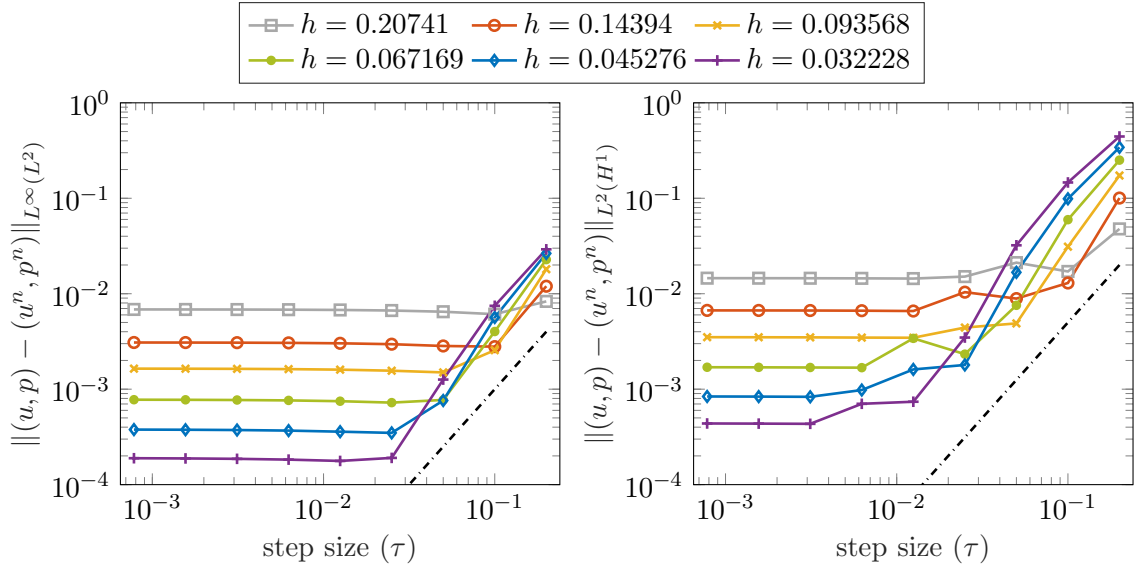
\begin{figure}
%
%
\begin{tikzpicture}

\begin{axis}[%
width=2.3in,
height=2.0in,
at={(-2.7in,0.in)},
scale only axis,
xmode=log,
xmin=0.000651041666666667,
xmax=0.24,
xminorticks=true,
xlabel style={font=\color{white!15!black}},
xlabel={step size ($\tau$)},
ymode=log,
ymin=1e-04,
ymax=1,
yminorticks=true,
ylabel style={font=\color{white!15!black}},
ylabel={$\|(u,p)-(u^n,p^n)\|_{L^\infty(L^2)}$},
axis background/.style={fill=white},
title style={font=\bfseries},
legend columns = 3,
legend style={legend cell align=left, align=left, at={(1.05,1.05)}, anchor=south, draw=white!15!black}
]
\addplot [color=mycolor0, line width=1.0pt, mark=square, mark options={solid, mycolor0}]
  table[row sep=crcr]{%
0.2	0.00831064180357317\\
0.1	0.00610615311425202\\
0.05	0.00647818480407206\\
0.025	0.0066687238716341\\
0.0125	0.00676124573594417\\
0.00625	0.00680655161331665\\
0.003125	0.00682903817104661\\
0.0015625	0.00684024460351847\\
0.00078125	0.00684582404978721\\
};
\addlegendentry{$h=0.20741$}

\addplot [color=mycolor1, line width=1.0pt, mark=o, mark options={solid, mycolor1}]
  table[row sep=crcr]{%
0.2	0.0119853352608639\\
0.1	0.00279125045312459\\
0.05	0.00283545285913072\\
0.025	0.00296049714503073\\
0.0125	0.00302421274935638\\
0.00625	0.00305585915596457\\
0.003125	0.00307157859561799\\
0.0015625	0.00307942647711184\\
0.00078125	0.00308334407234992\\
};
\addlegendentry{$h=0.14394$}

\addplot [color=mycolor2, line width=1.0pt, mark=x, mark options={solid, mycolor2}]
  table[row sep=crcr]{%
0.2	0.0180658057415511\\
0.1	0.00256312479061496\\
0.05	0.00149846315223859\\
0.025	0.00156147663891654\\
0.0125	0.00160167590852741\\
0.00625	0.00162263236455379\\
0.003125	0.00163323615047251\\
0.0015625	0.00163857114022713\\
0.00078125	0.00164124530296537\\
};
\addlegendentry{$h=0.093568$}

\addplot [color=mycolor3, line width=1.0pt, mark size=1.3pt, mark=*, mark options={solid, mycolor3}]
  table[row sep=crcr]{%
0.2	0.022774944834912\\
0.1	0.0040190219193388\\
0.05	0.000771717443423533\\
0.025	0.000723078869652495\\
0.0125	0.000748421449221405\\
0.00625	0.000762825280513239\\
0.003125	0.000770271033039022\\
0.0015625	0.000774041543453997\\
0.00078125	0.000775939246380322\\
};
\addlegendentry{$h=0.067169$}

\addplot [color=mycolor4, line width=1.0pt, mark=diamond, mark options={solid, mycolor4}]
  table[row sep=crcr]{%
0.2	0.0264563736356004\\
0.1	0.00563986883347158\\
0.05	0.000760912570152605\\
0.025	0.000348437130988138\\
0.0125	0.000358695519140015\\
0.00625	0.000368213830523193\\
0.003125	0.000373351582465469\\
0.0015625	0.000375988619866425\\
0.00078125	0.000377322365540206\\
};
\addlegendentry{$h=0.045276$}

\addplot [color=mycolor5, line width=1.0pt, mark=+, mark options={solid, mycolor5}]
  table[row sep=crcr]{%
0.2	0.0291868835877752\\
0.1	0.00746992775329283\\
0.05	0.00126082746443052\\
0.025	0.000190605832995084\\
0.0125	0.000177132256512838\\
0.00625	0.000182874677186677\\
0.003125	0.000186352145237303\\
0.0015625	0.000188181441604145\\
0.00078125	0.000189115117645268\\
};
\addlegendentry{$h=0.032228$}

\addplot [color=black, dashdotted, line width=1.0pt]
  table[row sep=crcr]{%
0.2	0.004\\
0.1	0.001\\
0.05	0.00025\\
0.025	6.25e-05\\
0.0125	1.5625e-05\\
0.00625	3.90625e-06\\
0.003125	9.765625e-07\\
0.0015625	2.44140625e-07\\
0.00078125	6.103515625e-08\\
};

\end{axis}

\begin{axis}[%
width=2.3in,
height=2.0in,
at={(0.3in,0.in)},
scale only axis,
xmode=log,
xmin=0.000651041666666667,
xmax=0.24,
xminorticks=true,
xlabel style={font=\color{white!15!black}},
xlabel={step size ($\tau$)},
ymode=log,
ymin=1e-04,
ymax=1,
yminorticks=true,
ylabel style={font=\color{white!15!black}},
ylabel={$\|(u,p)-(u^n,p^n)\|_{L^2(H^1)}$},
axis background/.style={fill=white},
title style={font=\bfseries},
legend style={at={(0.97,0.03)}, anchor=south east, legend cell align=left, align=left, draw=white!15!black}
]
\addplot [color=mycolor0, line width=1.0pt, mark=square, mark options={solid, mycolor0}]
  table[row sep=crcr]{%
0.2	0.04776346733455\\
0.1	0.0171107518249855\\
0.05	0.0211731205211251\\
0.025	0.015120834757731\\
0.0125	0.0144177779451404\\
0.00625	0.0144920748406581\\
0.003125	0.0145287681765461\\
0.0015625	0.0145469014637626\\
0.00078125	0.0145559107199765\\
};

\addplot [color=mycolor1, line width=1.0pt, mark=o, mark options={solid, mycolor1}]
  table[row sep=crcr]{%
0.2	0.100513128842429\\
0.1	0.0129595755743594\\
0.05	0.00889253286279804\\
0.025	0.0103840428015652\\
0.0125	0.00658780074616006\\
0.00625	0.00664107911338887\\
0.003125	0.00666907015423687\\
0.0015625	0.0066827763540288\\
0.00078125	0.00668954048651512\\
};

\addplot [color=mycolor2, line width=1.0pt, mark=x, mark options={solid, mycolor2}]
  table[row sep=crcr]{%
0.2	0.17317016382902\\
0.1	0.0311032872887439\\
0.05	0.0048876105290276\\
0.025	0.00442184351521405\\
0.0125	0.00345820597101918\\
0.00625	0.00347182091176979\\
0.003125	0.00349047219700399\\
0.0015625	0.00350009441291819\\
0.00078125	0.00350495631993585\\
};

\addplot [color=mycolor3, line width=1.0pt, mark size=1.3pt, mark=*, mark options={solid, mycolor3}]
  table[row sep=crcr]{%
0.2	0.250975830009403\\
0.1	0.0596778151049227\\
0.05	0.00756264571603902\\
0.025	0.00232727371801882\\
0.0125	0.00340766907466935\\
0.00625	0.00168093091400639\\
0.003125	0.00168751730494149\\
0.0015625	0.00169410857978177\\
0.00078125	0.0016975284945202\\
};

\addplot [color=mycolor4, line width=1.0pt, mark=diamond, mark options={solid, mycolor4}]
  table[row sep=crcr]{%
0.2	0.341752878995867\\
0.1	0.0987816149721883\\
0.05	0.016749855050765\\
0.025	0.0017932689343386\\
0.0125	0.00161015289291362\\
0.00625	0.000979666495432022\\
0.003125	0.000831712481922229\\
0.0015625	0.000836162036201092\\
0.00078125	0.000838561588884315\\
};

\addplot [color=mycolor5, line width=1.0pt, mark=+, mark options={solid, mycolor5}]
  table[row sep=crcr]{%
0.2	0.443864003380401\\
0.1	0.146390126619455\\
0.05	0.0321595433460286\\
0.025	0.00347485921647132\\
0.0125	0.000738924857318177\\
0.00625	0.000702777190899137\\
0.003125	0.000433544529964363\\
0.0015625	0.000435724457749669\\
0.00078125	0.000437241044151944\\
};

\addplot [color=black, dashdotted, line width=1.0pt]
  table[row sep=crcr]{%
0.2	0.02\\
0.00078125	3.0517578125e-07\\
};

\end{axis}
\end{tikzpicture}%
	\caption{Convergence history for the bulk--surface splitting scheme based on the midpoint rule described in Remark~\ref{rem:midpoint}, showing again up-moving error curves. The dashed line indicates order~$2$.}
	\label{fig:exp_lin_Trapez_a}
\end{figure}
%
%
\subsection{A semi-linear problem}
\label{sect:numerics:nonlinear}
As second numerical example, we consider a heat equation with nonlinear dynamic boundary conditions including a double-well potential, i.e., 
\begin{align*}
  	\dot u - \Delta u 
  	&= \fu\hphantom{-u^3+u\ } \qquad\text{in } \Omega,\\*
  	\dot u - \Delta_\Gamma u + \partial_{\nu} u 
  	&= \fp-u^3+u \qquad\text{on } \Gamma.
\end{align*}
As exact solution we set $u(t,x,y) = (x^2+y^2)^2\cos(\pi\, t/2)$ and construct the state-independent right-hand sides $\fu$ and $\fp$ accordingly. Moreover, we use a Newton solver for the nonlinear root-finding problem. Figure~\ref{fig:exp_nonlin_BDF_b} validates Remark~\ref{rem:nonlinear} that scheme~\eqref{eq:splittingScheme:fullyDiscrete} is still of second order if the right-hand sides~$\fu$ and $\fp$ are locally Lipschitz continuous in $u$ and $p$, respectively. Furthermore, the errors are again $h$-independent.
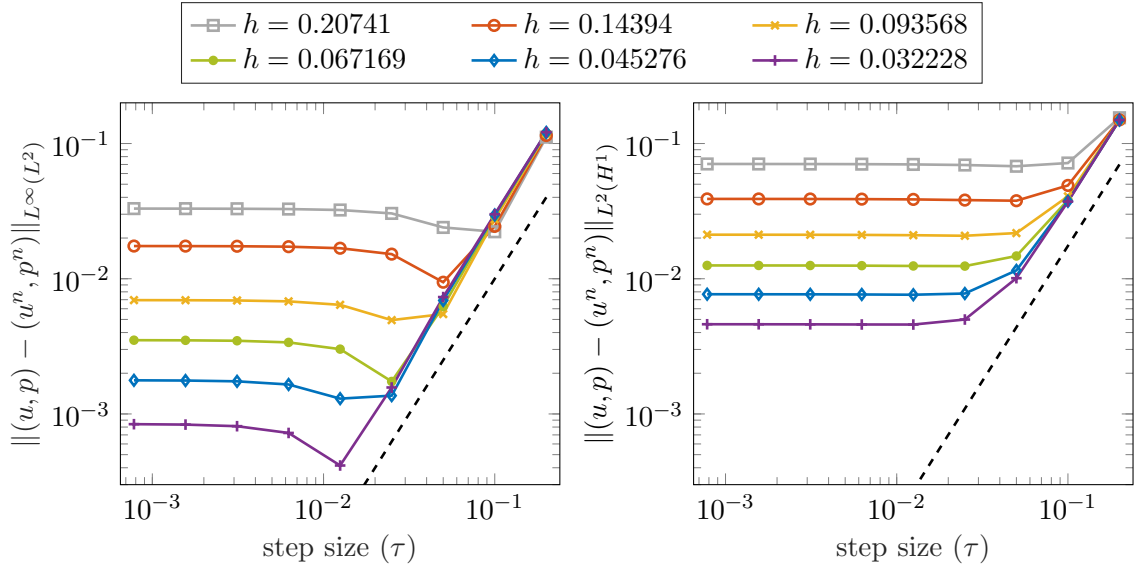
\begin{figure}
%
%
\begin{tikzpicture}

\begin{axis}[%
width=2.3in,
height=2.0in,
at={(-2.7in,0.in)},
scale only axis,
xmode=log,
xmin=0.000651041666666667,
xmax=0.24,
xminorticks=true,
xlabel style={font=\color{white!15!black}},
xlabel={step size ($\tau$)},
ymode=log,
ymin=3e-04,
ymax=0.2,
yminorticks=true,
ylabel style={font=\color{white!15!black}},
axis background/.style={fill=white},
title style={font=\bfseries},
ylabel={$\|(u,p)-(u^n,p^n)\|_{L^\infty(L^2)}$},
legend columns = 3,
legend style={legend cell align=left, align=left, at={(1.05,1.05)}, anchor=south, draw=white!15!black}
]
\addplot [color=mycolor0, line width=1.0pt, mark=square, mark options={solid, mycolor0}]
  table[row sep=crcr]{%
0.2	0.111451351035491\\
0.1	0.0222578837061282\\
0.05	0.0239572758746629\\
0.025	0.0304230176280994\\
0.0125	0.0322372350718074\\
0.00625	0.0327678131398673\\
0.003125	0.0329363638738525\\
0.0015625	0.0329961338938898\\
0.00078125	0.0330198520645871\\
};
\addlegendentry{$h=0.20741$\qquad}

\addplot [color=mycolor1, line width=1.0pt, mark=o, mark options={solid, mycolor1}]
  table[row sep=crcr]{%
0.2	0.114696919442474\\
0.1	0.0244103456157722\\
0.05	0.00942344879262317\\
0.025	0.0152078888585535\\
0.0125	0.0168163363543796\\
0.00625	0.0172617521094217\\
0.003125	0.0173922351388135\\
0.0015625	0.0174341118486044\\
0.00078125	0.0174491720848536\\
};
\addlegendentry{$h=0.14394$\qquad}

\addplot [color=mycolor2, line width=1.0pt, mark=x, mark options={solid, mycolor2}]
  table[row sep=crcr]{%
0.2	0.118202318412039\\
0.1	0.0274841257756267\\
0.05	0.0054780957297579\\
0.025	0.00494219384772666\\
0.0125	0.00641310645448555\\
0.00625	0.006802075736967\\
0.003125	0.00690749981109389\\
0.0015625	0.00693765636628414\\
0.00078125	0.00694706077556096\\
};
\addlegendentry{$h=0.093568$}

\addplot [color=mycolor3, line width=1.0pt, mark size=1.3pt, mark=*, mark options={solid, mycolor3}]
  table[row sep=crcr]{%
0.2	0.119930637166361\\
0.1	0.0288595326358534\\
0.05	0.0062370026478372\\
0.025	0.0017389565777132\\
0.0125	0.00301562349113076\\
0.00625	0.003382170814801\\
0.003125	0.0034783008741718\\
0.0015625	0.00350429294804916\\
0.00078125	0.00351173638715273\\
};
\addlegendentry{$h=0.067169$\qquad}

\addplot [color=mycolor4, line width=1.0pt, mark=diamond, mark options={solid, mycolor4}]
  table[row sep=crcr]{%
0.2	0.120824426885434\\
0.1	0.0296300829392388\\
0.05	0.00690758256136317\\
0.025	0.00136578781893807\\
0.0125	0.00129787812030623\\
0.00625	0.00165175747526998\\
0.003125	0.00174287449068496\\
0.0015625	0.00176668645130244\\
0.00078125	0.00177312158685399\\
};
\addlegendentry{$h=0.045276$\qquad}

\addplot [color=mycolor5, line width=1.0pt, mark=+, mark options={solid, mycolor5}]
  table[row sep=crcr]{%
0.2	0.121315733913015\\
0.1	0.0300124273015311\\
0.05	0.00731962779420763\\
0.025	0.00156926102402107\\
0.0125	0.000415674958341148\\
0.00625	0.000723530541503044\\
0.003125	0.00081174064523872\\
0.0015625	0.000834332367483516\\
0.00078125	0.000840216081347947\\
};
\addlegendentry{$h=0.032228$}

\addplot [color=black, dashed, line width=1.0pt]
  table[row sep=crcr]{%
0.2	0.04\\
0.0002	0.4e-07\\
};

\end{axis}

\begin{axis}[%
width=2.3in,
height=2.0in,
at={(0.3in,0.in)},
scale only axis,
xmode=log,
xmin=0.000651041666666667,
xmax=0.24,
xminorticks=true,
xlabel style={font=\color{white!15!black}},
xlabel={step size ($\tau$)},
ymode=log,
ymin=3e-04,
ymax=0.2,
yminorticks=true,
axis background/.style={fill=white},
title style={font=\bfseries},
ylabel={$\|(u,p)-(u^n,p^n)\|_{L^2(H^1)}$},
legend style={at={(0.97,0.03)}, anchor=south east, legend cell align=left, align=left, draw=white!15!black}
]
\addplot [color=mycolor0, line width=1.0pt, mark=square, mark options={solid, mycolor0}]
  table[row sep=crcr]{%
0.2	0.155061948289216\\
0.1	0.0717517307723613\\
0.05	0.0679138246406753\\
0.025	0.0693586165377729\\
0.0125	0.070076263331403\\
0.00625	0.0703810627096775\\
0.003125	0.0705176518878927\\
0.0015625	0.0705819788904056\\
0.00078125	0.0706129184188987\\
};

\addplot [color=mycolor1, line width=1.0pt, mark=o, mark options={solid, mycolor1}]
  table[row sep=crcr]{%
0.2	0.149656613889319\\
0.1	0.0489915071416882\\
0.05	0.0377593878001279\\
0.025	0.0381732276850644\\
0.0125	0.0386184562172588\\
0.00625	0.0388066126257928\\
0.003125	0.0388867977053364\\
0.0015625	0.0389233589473689\\
0.00078125	0.0389407712761888\\
};

\addplot [color=mycolor2, line width=1.0pt, mark=x, mark options={solid, mycolor2}]
  table[row sep=crcr]{%
0.2	0.149014225765377\\
0.1	0.0406704092658677\\
0.05	0.0217608952988179\\
0.025	0.0207901093607191\\
0.0125	0.0209891154542172\\
0.00625	0.0210967871270442\\
0.003125	0.0211425120588075\\
0.0015625	0.0211629517373943\\
0.00078125	0.0211726587252923\\
};

\addplot [color=mycolor3, line width=1.0pt, mark size=1.3pt, mark=*, mark options={solid, mycolor3}]
  table[row sep=crcr]{%
0.2	0.149006989096373\\
0.1	0.038194027716668\\
0.05	0.0147277511646649\\
0.025	0.0124017134224427\\
0.0125	0.0124295960517149\\
0.00625	0.0124960448463353\\
0.003125	0.0125251784089956\\
0.0015625	0.0125377328669163\\
0.00078125	0.0125435172831162\\
};

\addplot [color=mycolor4, line width=1.0pt, mark=diamond, mark options={solid, mycolor4}]
  table[row sep=crcr]{%
0.2	0.149139576003595\\
0.1	0.0374350038367348\\
0.05	0.0115306142917279\\
0.025	0.00777509721031328\\
0.0125	0.00761545912132173\\
0.00625	0.00765076105946838\\
0.003125	0.00766966284165495\\
0.0015625	0.00767770486645862\\
0.00078125	0.00768129406324557\\
};

\addplot [color=mycolor5, line width=1.0pt, mark=+, mark options={solid, mycolor5}]
  table[row sep=crcr]{%
0.2	0.149255703642409\\
0.1	0.0371820854093411\\
0.05	0.010093746369767\\
0.025	0.00499292378997978\\
0.0125	0.00458377525358579\\
0.00625	0.00458854637764388\\
0.003125	0.00459996354567174\\
0.0015625	0.00460505461354432\\
0.00078125	0.00460727449978018\\
};

\addplot [color=black, dashed, line width=1.0pt]
  table[row sep=crcr]{%
0.2	0.07\\
0.0002	0.7e-07
\\
};

\end{axis}
\end{tikzpicture}%
	\caption{Convergence history for the bulk--surface splitting scheme~\eqref{eq:splittingScheme:fullyDiscrete} applied to the semi-linear example of Section~\ref{sect:numerics:nonlinear}. The dashed line indicates order 2.}
	\label{fig:exp_nonlin_BDF_b}	
\end{figure}

We would like to emphasize that the nonlinearity only appears on the boundary. A positive implication is that the bulk problem, which needs to be solved as part of the splitting scheme, is linear. Without the splitting, one needs to solve a larger nonlinear system in each time step. This difference is crucial, since (for a uniform refinement) $\dofOm$ grows quadratically, whereas~$\dofGa$ only grows linearly. The resulting numerical gain is summarized in Table~\ref{tab:computation_time}. Therein, we compare the computation times of the proposed splitting scheme~\eqref{eq:splittingScheme:fullyDiscrete} and a standard BDF-$2$ discretization of the original system without any splitting, i.e., the approximations of $u_2$ and $w$ are related to $p$ via $\Mla u_2^n = \Bp p^n$ and $\Mla w^n = \Bp \DBDF p^n$. For both schemes, we use a Newton solver with a tolerance of~$10^{-12}$. One can observe that the splitting approach is between three to eight times faster.  
In the simulations, we need for both schemes (in average) two Newton steps for the three smaller step sizes and three steps for the larger ones. Hence, by $\Vh|_\Gamma = \Qh$, the proposed splitting scheme solves in average one linear system of size~$(\dofOm-\dofGa)\times(\dofOm-\dofGa)$ and two (respectively three) systems of size~$\dofGa\times\dofGa$ per time step. In contrast, the fully coupled BDF scheme deals with two (respectively three) systems of size~$\dofOm\times\dofOm$. Further note that $\dofGa$ is neglectable for greater $\dofOm$ and the speed-up is around the number of steps of the nonlinear solver. On the other hand, we have to mention that the BDF method is A-stable \cite[p.~246~f.]{HaiW96}. Therefore, the temporal step size can be choose independently of the spatial mesh width. Moreover, it is a $2$-step rather than a $3$-step scheme.  

\begin{table}
	\caption{Speed-up factors of the splitting scheme~\eqref{eq:splittingScheme:fullyDiscrete} compared to the computation time of the standard $2$-step BDF method applied to the original problem.}
	\label{tab:computation_time}
	\begin{tabular}{c||c|c|c|c|c|c|c|c|c}
		\diagbox{$\ \ \ h$}{$10\,\tau$} & $2^{1}$ & $ 2^{0}$ & $2^{-1}$ & $2^{-2}$ & $2^{-3}$ & $2^{-4}$ & $2^{-5}$ & $2^{-6}$ & $2^{-7}$\\
		\hline
		\hline
		{\color{mycolor0}
		$0.20741\hphantom{0}$} & 3.74 &   3.69 &   3.64 &   3.53 &   3.08 &   3.19 & 3.17 &   3.20 &   3.28 \\
		\hline
		{\color{mycolor1}
		$0.14394\hphantom{0}$} & 4.85 &   4.73 &   4.55 &   4.35 &   3.66 &   3.75 &	3.78 &   3.80 &   3.84 \\
		\hline
		{\color{mycolor2}
		$0.093568$} & 7.34 &   6.10 &   5.77 &   5.72 &   4.74 &   4.90 & 5.14 &   5.25 &   4.94 \\
		\hline
		{\color{mycolor3}
		$0.067169$} & 8.34 &   4.90 &   6.36 &   7.11 &   6.04 &   6.10 & 5.69 &   5.35 &   5.79 \\
		\hline
		{\color{mycolor4}
		$0.045276$} & 5.75 &   6.79 &   5.27 &   5.31 &   3.59 &   3.64 & 4.28 &   3.81 &   3.94 \\
		\hline
		{\color{mycolor5}
		$0.032228$} & 6.79 &   6.36 &   5.81 &   5.81 &   3.91 &   4.27 & 4.03 &   4.14 &   4.21 
	\end{tabular}
\end{table}
%
%
%
%
%
\subsection{A bulk--surface splitting scheme of third order}
\label{sect:numerics:thirdOrder}
Following the ideas presented in Section~\ref{sect:splitting}, we can easily construct bulk--surface splitting schemes of higher order. For this, we decouple bulk and surface dynamics by approximating $p$ and $\dot p$ in the constraints~\eqref{eq:sub1:bulk:b} and~\eqref{eq:sub1:bulk:c} with appropriate order by a linear combination of previous iterates. To obtain a third-order splitting scheme, we can replace the constraints by the approximations 
\begin{align*}
	\Mla u_2(t^{n+4}) = \Bp p(t^{n+4}) &\approx \Bp\big(3p(t^{n+3})-3p(t^{n+2})+p(t^{n+1})\big),\\
	\Mla w(t^{n+4}) = \Bp \dot{p}(t^{n+4}) &\approx \tfrac 1 {6\tau} \Bp \big(26 p(t^{n+3}) - 57 p(t^{n+2}) + 42 p(t^{n+1}) - 11 p(t^{n})\big).
\end{align*}
Note that both approximations are of order three with the least amount of needed data points from the past. For the temporal discretization of the resulting system, we apply the standard BDF-$3$ method, i.e., we use 
\begin{equation*}
	\DBDF x^{n+4} 
	\coloneqq \tfrac1{6\tau} \big(11 x^{n+4} - 18 x^{n+3} + 9 x^{n+2} -2x^{n+1}\big).
\end{equation*}
In the end, we obtain a~$4$-step method, which decouples the dynamics of~$u$ and~$p$.
The application of the resulting scheme to the linear problem introduced in Section~\ref{sect:numerics:linear} indeed shows third-order convergence. Furthermore, the error plots show no dependence on the spatial mesh width~$h$. The corresponding convergence history is illustrated Figure~\ref{fig:exp_lin_BDF3}, where finer meshes with $N_u = 10373, 20232, 41488$ are included in order to depict the actual rate. 
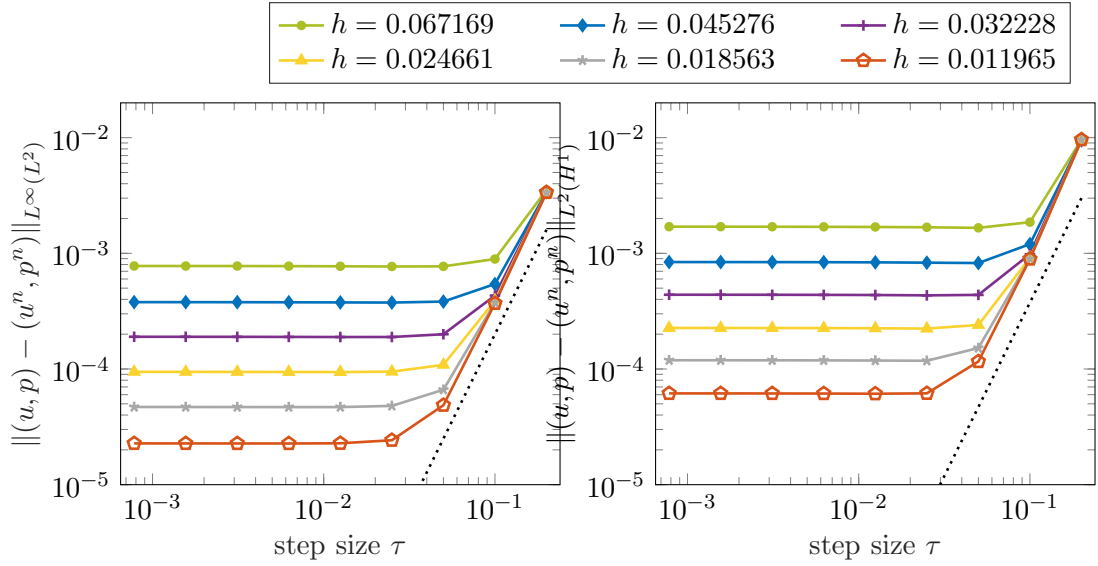
\begin{figure}
%
%

\begin{tikzpicture}

\begin{axis}[%
width=2.3in,
height=2.0in,
at={(-2.7in,0.in)},
scale only axis,
xmode=log,
xmin=0.000651041666666667,
xmax=0.24,
xminorticks=true,
xlabel style={font=\color{white!15!black}},
xlabel={step size $\tau$},
ymode=log,
ymin=1e-05,
ymax=0.02,
yminorticks=true,
ylabel style={font=\color{white!15!black}},
ylabel={$\|(u,p)-(u^n,p^n)\|_{L^\infty(L^2)}$},
axis background/.style={fill=white},
title style={font=\bfseries},
legend columns = 3,
legend style={legend cell align=left, align=left, at={(1.25,1.05)}, anchor=south, draw=white!15!black}
]

%

\addplot [color=mycolor3, line width=1.0pt, mark size=1.3pt, mark=*]
  table[row sep=crcr]{%
0.2	0.00352915502934672\\
0.1	0.000892747339502332\\
0.05	0.00077171558905171\\
0.025	0.000769520849444888\\
0.0125	0.000773142942608169\\
0.00625	0.000775433258958038\\
0.003125	0.000776632090976135\\
0.0015625	0.000777237136433794\\
0.00078125	0.000777540128512165\\
};
\addlegendentry{$h=0.067169$\qquad}

\addplot [color=mycolor4, line width=1.0pt, mark=diamond*]
  table[row sep=crcr]{%
0.2	0.00339527625429293\\
0.1	0.000542855759902989\\
0.05	0.000382788016775397\\
0.025	0.000375377660886339\\
0.0125	0.00037647252313682\\
0.00625	0.000377505231188677\\
0.003125	0.000378077881407069\\
0.0015625	0.000378370676797314\\
0.00078125	0.000378518123942123\\
};
\addlegendentry{$h=0.045276$\qquad}

\addplot [color=mycolor5, line width=1.0pt, mark=+]
  table[row sep=crcr]{%
0.2	0.00335618547213292\\
0.1	0.000431665292081409\\
0.05	0.000199803973470892\\
0.025	0.000189140570920652\\
0.0125	0.000189042997835004\\
0.00625	0.00018948688517869\\
0.003125	0.00018976621998437\\
0.0015625	0.000189912722985268\\
0.00078125	0.000189986765112641\\
};
\addlegendentry{$h=0.032228$}

\addplot [color=mycolor7, line width=1.0pt, mark=triangle*, mark options={solid, mycolor7}]
table[row sep=crcr]{%
	0.2	0.00334956772973942\\
	0.1	0.000390508430459008\\
	0.05	0.000108695308869519\\
	0.025	9.48622568731557e-05\\
	0.0125	9.41208506655805e-05\\
	0.00625	9.42614777872794e-05\\
	0.003125	9.43901494890374e-05\\
	0.0015625	9.44615325172689e-05\\
	0.00078125	9.44980744866564e-05\\
};
\addlegendentry{$h=0.024661$\qquad}

\addplot [color=mycolor0, line width=1.0pt, mark=star]
table[row sep=crcr]{%
0.2	0.00335282635499168\\
0.1	0.000375527574690516\\
0.05	6.61678437300609e-05\\
0.025	4.79867853799769e-05\\
0.0125	4.68593816060595e-05\\
0.00625	4.68475227582196e-05\\
0.003125	4.69015411945722e-05\\
0.0015625	4.69357973230362e-05\\
0.00078125	4.69538104193495e-05\\
};
\addlegendentry{$h=0.018563$\qquad}

\addplot [color=mycolor1, line width=1.0pt, mark=pentagon, mark size=2.5]
table[row sep=crcr]{%
0.2	0.00336127899063175\\
0.1	0.000369643445230191\\
0.05	4.86218001440385e-05\\
0.025	2.42161024171597e-05\\
0.0125	2.27893577229368e-05\\
0.00625	2.26982197418765e-05\\
0.003125	2.27141270938754e-05\\
0.0015625	2.27294264029916e-05\\
0.00078125	2.27379926389452e-05\\
};
\addlegendentry{$h=0.011965$}

\addplot [color=black, dotted, line width=1.0pt]
  table[row sep=crcr]{%
0.2	0.0016\\
0.00078125	9.536743164062502e-11\\
};

\end{axis}

\begin{axis}[%
width=2.3in,
height=2.0in,
at={(0.1in,0.in)},
scale only axis,
xmode=log,
xmin=0.000651041666666667,
xmax=0.24,
xminorticks=true,
xlabel style={font=\color{white!15!black}},
xlabel={step size $\tau$},
ymode=log,
ymin=1e-05,
ymax=0.02,
yminorticks=true,
axis background/.style={fill=white},
title style={font=\bfseries},
ylabel={$\|(u,p)-(u^n,p^n)\|_{L^2(H^1)}$},
legend style={at={(0.97,0.03)}, anchor=south east, legend cell align=left, align=left, draw=white!15!black}
]

%

\addplot [color=mycolor3, line width=1.0pt, mark size=1.3pt, mark=*]
table[row sep=crcr]{%
	0.2	0.00961174035254454\\
	0.1	0.00185968778719092\\
	0.05	0.00166138552675467\\
	0.025	0.00168003185527248\\
	0.0125	0.00169048096598749\\
	0.00625	0.00169571833893103\\
	0.003125	0.00169835647931243\\
	0.0015625	0.00169968394681982\\
	0.00078125	0.00170035035832364\\
};

\addplot [color=mycolor4, line width=1.0pt, mark=diamond*]
table[row sep=crcr]{%
	0.2	0.00949001960332836\\
	0.1	0.00120092646667088\\
	0.05	0.000824382030162911\\
	0.025	0.000830457878251427\\
	0.0125	0.000835790761840234\\
	0.00625	0.000838408346331633\\
	0.003125	0.000839715117667252\\
	0.0015625	0.000840371089479183\\
	0.00078125	0.000840700448667594\\
};

\addplot [color=mycolor5, line width=1.0pt, mark=+]
table[row sep=crcr]{%
	0.2	0.00949789134239025\\
	0.1	0.000983623379786556\\
	0.05	0.000437225534139252\\
	0.025	0.000433257289913822\\
	0.0125	0.000436130933801887\\
	0.00625	0.000437524747063133\\
	0.003125	0.00043821061561406\\
	0.0015625	0.000438553357076131\\
	0.00078125	0.000438725252110458\\
};

\addplot [color=mycolor7, line width=1.0pt, mark=triangle*, mark options={solid, mycolor7}]
table[row sep=crcr]{%
	0.2	0.00953736045626931\\
	0.1	0.000915671713545194\\
	0.05	0.000240288191438501\\
	0.025	0.00022358684303233\\
	0.0125	0.000225026583498601\\
	0.00625	0.000225768921790181\\
	0.003125	0.000226126217257192\\
	0.0015625	0.000226303388641993\\
	0.00078125	0.000226391995609228\\
};

\addplot [color=mycolor0, line width=1.0pt, mark=star]
table[row sep=crcr]{%
0.2	0.00957526886939884\\
0.1	0.000897618831659169\\
0.05	0.000151805328984745\\
0.025	0.000117838768091292\\
0.0125	0.00011831413272269\\
0.00625	0.00011872014792581\\
0.003125	0.0001189107445903\\
0.0015625	0.000119004220383983\\
0.00078125	0.000119050816351981\\
};

\addplot [color=mycolor1, line width=1.0pt, mark=pentagon, mark size=2.5]
table[row sep=crcr]{%
0.2	0.00961521651784442\\
0.1	0.000893352557737638\\
0.05	0.000116076969850662\\
0.025	6.16075487596707e-05\\
0.0125	6.11351109003544e-05\\
0.00625	6.13528726278729e-05\\
0.003125	6.14539182649356e-05\\
0.0015625	6.15025710260367e-05\\
0.00078125	6.15266825995244e-05\\
};

\addplot [color=black, dotted, line width=1.0pt]
table[row sep=crcr]{%
	0.2	0.003\\
	0.002	0.3e-8\\
};

\end{axis}
\end{tikzpicture}%
	\caption{Convergence history for the third-order bulk--surface splitting scheme introduced in Section~\ref{sect:numerics:thirdOrder}. The dotted line indicates order 3.} 
	\label{fig:exp_lin_BDF3}	
\end{figure}
\begin{remark}
Similarly to the discussion in Section~\ref{sect:numerics:linear}, one can compute the optimal scaling of the parameters $\tau$ and $h$. In the here considered third-order case combined with piecewise linear finite elements, this leads to~$\tau \propto h^{2/3}$ for the $L^\infty(L^2)$-error and ~$\tau \propto h^{1/3}$ for the $L^2(H^1)$-error. 	
\end{remark}	
%
%
\section{Conclusion}\label{sect:conclusion}
In this paper, we have introduced a bulk--surface splitting scheme of second order for semi-linear parabolic partial differential equations with dynamic boundary conditions. For this, we have formulated the system as PDAE and approximated the boundary values in the appearing constraints by a linear combination of previous boundary values. To obtain a fully discrete scheme, the resulting system is discretized with a BDF method of corresponding order. 

For the second-order case, we have proven convergence of the resulting $3$-step scheme, assuming a weak CFL-type condition and an appropriate initialization of the method by some other (high-order) time integration method. The proposed construction can also be used to obtain higher-order splitting schemes as illustrated numerically.  
%
%
\bibliographystyle{alpha} 
\bibliography{bib_dynBC}

\begin{thebibliography}{AMU22}

\bibitem[AKZ23]{AltKZ22}
R.~Altmann, B.~Kov\'{a}cs, and C.~Zimmer.
\newblock Bulk--surface {L}ie splitting for parabolic problems with dynamic
  boundary conditions.
\newblock {\em IMA J. Numer. Anal.}, 2(43):950--975, 2023.

\bibitem[Alt19]{Alt19}
R.~Altmann.
\newblock A {PDAE} formulation of parabolic problems with dynamic boundary
  conditions.
\newblock {\em Appl. Math. Lett.}, 90:202--208, 2019.

\bibitem[AMU21]{AltMU21}
R.~Altmann, R.~Maier, and B.~Unger.
\newblock Semi-explicit discretization schemes for weakly-coupled
  elliptic-parabolic problems.
\newblock {\em Math. Comp.}, 90:1089--1118, 2021.

\bibitem[AMU22]{AltMU22ppt}
R.~Altmann, R.~Maier, and B.~Unger.
\newblock Semi-explicit integration of second order for weakly coupled
  poroelasticity.
\newblock {\em ArXiv Preprint}, 2203.16664, 2022.

\bibitem[AV21]{AltV21}
R.~Altmann and B.~Verf\"u{}rth.
\newblock A multiscale method for heterogeneous bulk--surface coupling.
\newblock {\em Multiscale Model. Simul.}, 19(1):374--400, 2021.

\bibitem[AZ23]{AltZ23ppt}
R.~Altmann and C.~Zimmer.
\newblock A posteriori error estimation for parabolic problems with dynamic
  boundary conditions.
\newblock {\em ArXiv Preprint}, 2302.02893, 2023.

\bibitem[BF91]{BreF91}
F.~Brezzi and M.~Fortin.
\newblock {\em Mixed and Hybrid Finite Element Methods}.
\newblock Springer, New York, NY, 1991.

\bibitem[Bra07]{Bra07}
D.~Braess.
\newblock {\em Finite Elements - Theory, Fast Solvers, and Applications in
  Solid Mechanics}.
\newblock Cambridge University Press, New York, NY, third edition, 2007.

\bibitem[CFK23]{CsoFK23}
P.~Csomós, B.~Farkas, and B.~Kovács.
\newblock Error estimates for a splitting integrator for abstract semilinear
  boundary coupled systems.
\newblock {\em IMA J. Numer. Anal.}, published online, 2023.

\bibitem[EF05]{EngF05}
K.-J. Engel and G.~Fragnelli.
\newblock Analyticity of semigroups generated by operators with generalized
  {W}entzell boundary conditions.
\newblock {\em Adv. Differential Equ.}, 10(11):1301--1320, 2005.

\bibitem[Emm04]{Emm04}
E.~Emmrich.
\newblock Stability and convergence of the two-step {BDF} for the
  incompressible {N}avier-{S}tokes problem.
\newblock {\em Int. J. Nonlin. Sci. Num.}, 5(3):199--210, 2004.

\bibitem[Esc93]{Esc93}
J.~Escher.
\newblock Quasilinear parabolic systems with dynamical boundary conditions.
\newblock {\em Commun. Part. Diff. Eq.}, 18(7-8):1309--1364, 1993.

\bibitem[Fai79]{Fai79}
G.~Fairweather.
\newblock On the approximate solution of a diffusion problem by {G}alerkin
  methods.
\newblock {\em J. Inst. Math. Appl.}, 24(2):121--137, 1979.

\bibitem[Gol06]{Gol06}
G.~R. Goldstein.
\newblock Derivation and physical interpretation of general boundary
  conditions.
\newblock {\em Adv. Differential Equ.}, 11(4):457--480, 2006.

\bibitem[GT01]{GilT01}
D.~Gilbarg and N.~S. Trudinger.
\newblock {\em Elliptic Partial Differential Equations of Second Order}.
\newblock Springer-Verlag, Berlin, 2001.

\bibitem[HW96]{HaiW96}
E.~Hairer and G.~Wanner.
\newblock {\em Solving Ordinary Differential Equations {II}: Stiff and
  Differential-Algebraic Problems}.
\newblock Springer-Verlag, Berlin, second edition, 1996.

\bibitem[KA03]{KnaA03}
P.~Knabner and L.~Angermann.
\newblock {\em Numerical Methods for Elliptic and Parabolic Partial
  Differential Equations}.
\newblock Springer, New York, NY, 2003.

\bibitem[KL17]{KovL17}
B.~Kov{\'a}cs and C.~Lubich.
\newblock Numerical analysis of parabolic problems with dynamic boundary
  conditions.
\newblock {\em IMA J. Numer. Anal.}, 37(1):1--39, 2017.

\bibitem[KM06]{KunM06}
P.~Kunkel and V.~Mehrmann.
\newblock {\em Differential-Algebraic Equations. Analysis and Numerical
  Solution}.
\newblock European Mathematical Society Publishing House, Z\"urich, 2006.

\bibitem[Las02]{Las02}
I.~Lasiecka.
\newblock {\em Mathematical Control Theory of Coupled {PDE}s}.
\newblock Society for Industrial and Applied Mathematics (SIAM), Philadelphia,
  PA, 2002.

\bibitem[Lie13]{Lie13}
M.~Liero.
\newblock Passing from bulk to bulk-surface evolution in the {A}llen--{C}ahn
  equation.
\newblock {\em Nonl. Diff. Eqns. Appl. (NoDEA)}, 20(3):919--942, 2013.

\bibitem[MS93]{MatS93}
S.~E. Mattsson and G.~S{\"o}derlind.
\newblock Index reduction in differential-algebraic equations using dummy
  derivatives.
\newblock {\em SIAM J. Sci. Comput.}, 14(3):677--692, 1993.

\bibitem[PS04]{PerS04}
P.-O. Persson and G.~Strang.
\newblock A simple mesh generator in {MATLAB}.
\newblock {\em SIAM Rev.}, 46:329--345, 2004.

\bibitem[Zim21]{Zim21}
C.~Zimmer.
\newblock {\em Temporal {D}iscretization of {C}onstrained {P}artial
  {D}ifferential {E}quations}.
\newblock PhD thesis, Technische Universit{\"a}t Berlin, 2021.

\end{thebibliography}
%
%
\appendix
\section{Estimate of $\tau \sum_{k=3}^{n+3} \|\frac 1 \tau \EBDF^3 e_p^{k}\|_{\Mp}^2$}\label{app:new}
In this appendix, we provide an upper bound of the term  $\tau \sum_{k=3}^{n+3} \|\frac 1 \tau \EBDF^3 e_p^{k}\|_{\Mp}^2$, which completes the proof of Theorem~\ref{th:BDF_convergence_order_weak_norm}. As the first step, we notice that the difference of two consecutive approximations, i.e., $\EBDF \hat u^k$, $\EBDF p^k$, and $\EBDF \lambda^k$, satisfy~\eqref{eqn:BDF_sequential} for $k\geq 4$ with right-hand sides~$\EBDF \fu^k$, $\EBDF \fp^k$. With estimate~\eqref{eqn:BDF_convergence_order_strong_norm_help} we conclude that 
\begin{multline}
	\label{eqn:sum_E3epk_app}
	\tau \sum_{k=4}^{n+3}  \|\tfrac 1 \tau \EBDF^3 e_p^{k}\|^2_{\Mp} 
	\lesssim \|\EBDF e_{\hat u}^3\|_{\Ku}^2 + \|\EBDF e_p^3\|_{\Kp}^2 + \tau\, \|\tfrac 1 \tau \EBDF^2 e_{\hat u}^3\|_{\Mu}^2 + \tau\,\|\tfrac 1 \tau \EBDF^2 e_p^3 \|_{\Mp}^2 + \tfrac{h}{\tau}\, \|\EBDF^3 e_p^3\|_{\Mp}^2\\*
	+ \tau^6 h \int_{t^{0}}^{t^{n+3}} \|p^{(4)}\|_{\Mp}^2 \ds
	+ \tau^5 \int_{t^{1}}^{t^{n+3}} \|u_2^{(3)}\|_{K_{22}}^2 + \tau\, \big( \|u^{(4)}\|_{\Mu}^2 + \|p^{(4)}\|_{\Mp}^2 \big)\ds. 
\end{multline}
To derive the integrals in~\eqref{eqn:sum_E3epk_app}, we have used that 
\begin{equation*}
	\int_\tau^t |r(s)-r(s-\tau)|^2 \ds 
	= \int_\tau^t \big| \int_{s-\tau}^s \dot r(\xi)\dxi \big|^2 \ds 
	\leq \tau \int_{-\tau}^0 \int_{\xi}^{t+\xi} | \dot r(s)|^2 \ds \dxi \leq \tau^2 \int_{0}^t | \dot r(s)|^2 \ds
\end{equation*}
for a sufficiently regular function~$r$. 

As the second step, we bound the terms in the first line of~\eqref{eqn:sum_E3epk_app}. For this, we consider
\begin{align*}
	&\begin{bmatrix}
		\Mu & 0 \\
		0 & \Mp
	\end{bmatrix}
	\begin{bmatrix}
		\DBDF \EBDF e_{\hat u}^{3}\\
		\DBDF \EBDF e_p^{3}
	\end{bmatrix}
	+
	\begin{bmatrix}
		\Ku & 0\\
		0 & \Kp
	\end{bmatrix}
	\begin{bmatrix}
		\EBDF e_{\hat u}^{3}\\
		\EBDF e_p^{3}
	\end{bmatrix}
	+\begin{bmatrix}
		-\Bu^T\\
		\Bp^T
	\end{bmatrix}
	\lambda^{3}\\
	&\qquad = \begin{bmatrix}
		\Mu & 0 \\
		0 & \Mp
	\end{bmatrix}
	\begin{bmatrix}
		\DBDF u(t^{3}) - \dot u(t^3)\\
		\DBDF p(t^{3}) - \dot p(t^3)
	\end{bmatrix}
	+
	\begin{bmatrix}
		\tfrac 3 {2\tau}M_{\bullet 2} \EBDF + K_{\bullet 2}\\
		0
	\end{bmatrix}
	\big( \EBDF^2 e_{\hat u_2}^{3} - \EBDF^2 \hat u_2(t^{3}) \big)\\
	&\qquad\qquad -\begin{bmatrix}
		\Mu & 0 \\
		0 & \Mp
	\end{bmatrix}
	\begin{bmatrix}
		\DBDF e_{\hat u}^{2}\\
		\DBDF  e_p^{2}
	\end{bmatrix}
	-
	\begin{bmatrix}
		\Ku & 0\\
		0 & \Kp
	\end{bmatrix}
	\begin{bmatrix}
		e_{\hat u}^{2}\\
		e_p^{2}
	\end{bmatrix}
\end{align*}
with test function $4\, [(\EBDF e_{\hat{u}}^3)^T\ (\EBDF e_{p}^3)^T]^T$. Combining equation~\eqref{eqn:testing_BDF} and 
\begin{equation*}
	\EBDF \|2x^{n+3}-x^{n+2}\|_M^2 
	= 2\, \|\EBDF x^{n+3}\|_M^2 - 2\, \|\EBDF x^{n+2}\|_M^2 + 2\, \| x^{n+3}\|_M^2 - 3\, \|x^{n+2}\|_M^2 + \| x^{n+1}\|_M^2, 
\end{equation*}
see also \cite[Lem.~4.1]{AltMU22ppt}, we derive the estimate
\begin{align*}
	&\quad\tau\, \big(3\,\|\tfrac{1}{\tau} \EBDF e_{\hat u}^{3}\|^2_{\Mu}
	- 4\,\|\tfrac{1}{\tau} \EBDF e_{\hat u}^{2}\|^2_{\Mu}
	+  \|\tfrac{1}{\tau} \EBDF e_{\hat u}^{1}\|^2_{\Mu} 
	+ 2\EBDF \|\tfrac{1}{\tau} \EBDF^2 e_{\hat u}^{3}\|^2_{\Mu}
	+ \|\tfrac{1}{\tau} \EBDF^3 e_{\hat u}^{3}\|^2_{\Mu}\big)\\*
	&\qquad + \tau\, \big(3\,\|\tfrac{1}{\tau} \EBDF e_{p}^{3}\|^2_{\Mu}
	- 4\, \|\tfrac{1}{\tau} \EBDF e_{p}^{2}\|^2_{\Mu}
	+ \|\tfrac{1}{\tau} \EBDF e_{p}^{1}\|^2_{\Mu} 
	+ 2\EBDF \|\tfrac{1}{\tau} \EBDF^2 e_{p}^{3}\|^2_{\Mu}
	+ \|\tfrac{1}{\tau} \EBDF^3 e_{p}^{3}\|^2_{\Mu} \big)\\
	&\qquad + 4\, \|\EBDF e_{\hat u}^{3}\|_{\Ku}^2 + 4\, \|\EBDF e_p^{3}\|_{\Kp}^2\\*
	&\leq 4\tau\, \|\tfrac 1 \tau \EBDF e_{\hat u}^{3}\|_{\Mu} \Big( \|\DBDF u(t^3)-\dot u (t^3)\|_{\Mu} + \sqrt{c_M h}\, \Big[ \|\tfrac{3}{2\tau} \EBDF^3 e_p^3\|_{\Mp} + \|\tfrac{3}{2\tau} \EBDF^3 p(t^3)\|_{\Mp}\Big]  \\*
	&\qquad + \|\DBDF e_u^2\|_{\Mu} \Big) + 3\, \|\EBDF e_{\hat{u}}^3\|_{\Ku}^2 + 4\, \|\EBDF^2 e_{\hat{u}_2}^3\|_{K_{22}}^2 + 4\, \|\EBDF^2 \hat{u}_2(t^3)\|_{K_{22}}^2 + 4\, \|e_{\hat{u}}^2\|_{\Ku}^2 \\
	&\qquad + 4\tau\, \|\tfrac 1 \tau \EBDF e_{p}^{3}\|_{\Mp} \Big( \|\DBDF p(t^3)-\dot p (t^3)\|_{\Mp} + \|\DBDF e_p^2\|_{\Mp} \Big) + 3\, \|\EBDF e_{p}^3\|_{\Kp}^2 + \tfrac 43\, \|e_{p}^2\|_{\Kp}^2.
\end{align*}
By Young's inequality and Lemma~\ref{lem:error_DandE}, we then get 
\begin{align*}
	&\|\EBDF e_{\hat u}^3\|_{\Ku}^2 + \|\EBDF e_p^3\|_{\Kp}^2 + \tau\, \|\tfrac 1 \tau \EBDF^2 e_{\hat u}^3\|_{\Mu}^2 + \tau\,\|\tfrac 1 \tau \EBDF^2 e_p^3 \|_{\Mp}^2 + (1+h)\tau\, \|\tfrac 1 \tau \EBDF^3 e_p^{3}\|_{\Mp}^2\\*
	&\quad\lesssim (1+h)\, \Big( \|e_{\hat{u}}^2\|_{\Ku}^2 + \|e_{p}^2\|_{\Kp}^2 + \tau\, \|\tfrac 1 \tau \EBDF e_{\hat{u}}^2\|_{\Mu}^2 + \tau\, \|\tfrac 1 \tau \EBDF e_{p}^2\|_{\Mp}^2 \\
	&\qquad\qquad\qquad + \tau\, \|\DBDF e_{\hat{u}}^2\|_{\Mu}^2 + \tau\, \|\DBDF e_{p}^2\|_{\Mp}^2 + h \tau\, \|\tfrac{1}{\tau} \EBDF^2 e_p^2\|_{\Mp}^2 \\
	&\qquad\qquad\qquad + \tau^4\!\! \max_{s\in[t^{1},t^{3}]} \|\ddot{u}_2(s)\|_{K_{22}}^2 + \tau^4 \int_{t^{1}}^{t^3} \|\dddot{u}\|_{\Mu}^2 + \|\dddot{p}\|_{\Mp}^2 \ds
	+h \tau^4 \int_{t^{0}}^{t^3} \|\dddot{p}\|_{\Mp}^2 \ds \Big) 
\end{align*}
by partially shifting the $c_M h$ and $c_K \tau h^{-1}$ terms from the norm of $\tau^{-1}\EBDF^3 e_p^3$ to $\tau^{-1}\EBDF^2 e_p^3$. By the assumption on the initial values in Theorem~\ref{th:BDF_convergence_order_weak_norm}, the right-hand side is bounded by $C(1+h)^2\tau^4$. Together with the first step, this finally shows the desired estimate
\begin{equation*}
	\tau \sum_{k=3}^{n+3} \|\tfrac{1}{\tau}\EBDF^3 p^k\|_{\Mp}^2 
	\lesssim (1+h)^2\,\tau^4.
\end{equation*}
\end{document}